\newcommand{\R}{\mathbb{R}}
\newtheorem{theorem}{Theorem}[section]
\newtheorem{lemma}[theorem]{Lemma}
\newtheorem{remark}[theorem]{Remark}
\numberwithin{equation}{section}
\begin{document}

\title[Convergence to rarefaction wave for Landau equation]{Small Knudsen rate of convergence to rarefaction wave for the Landau equation}

\author[R.-J. Duan]{Renjun Duan}
\address[R.-J. Duan]{Department of Mathematics, The Chinese University of Hong Kong, Shatin, Hong Kong,
        People's Republic of China}
\email{rjduan@math.cuhk.edu.hk}

\author[D.-C. Yang]{Dongcheng Yang}
\address[D.-C. Yang]{School of Mathematical Sciences,
        South China Normal University, Guangzhou 510631, People's Republic of China}
\email{dcmath@sina.com}

\author[H.-J. Yu]{Hongjun Yu}
\address[H.-J. Yu]{School of Mathematical Sciences, South China Normal University, Guangzhou 510631, People's Republic of China}
\email{yuhj2002@sina.com}

\begin{abstract}
In this paper, we are concerned with the hydrodynamic limit to rarefaction waves of the compressible Euler system for the Landau equation with Coulomb potentials as the Knudsen number $\epsilon>0$ is vanishing. Precisely, whenever $\epsilon>0$ is small, for the Cauchy problem on the Landau equation with suitable initial data involving a scaling parameter $a\in [\frac{2}{3},1]$, we construct the unique global-in-time uniform-in-$\epsilon$ solution around a local Maxwellian whose fluid quantities are the rarefaction wave of the corresponding Euler system. In the meantime, we establish the convergence of solutions to the Riemann rarefaction wave uniformly away from $t=0$ at a rate  $\epsilon^{\frac{3}{5}-\frac{2}{5}a}|\ln \epsilon|$ as $\epsilon\to 0$. The proof is based on the refined energy approach combining \cite{G1} and \cite{LTP2} under the scaling transformation $(t,x)\to (\epsilon^{-a}t,\epsilon^{-a}x)$.
\end{abstract}


\keywords{Landau equation, Coulomb potential, rarefaction wave, hydrodynamic limit, rate of convergence, macro-micro decomposition, energy method.}
\date{\today}
\maketitle

\setcounter{tocdepth}{1}
\tableofcontents
\thispagestyle{empty}


\section{Introduction}

The Landau equation is one of the most fundamental equations in plasma physics. A lot of great contributions in the mathematical study of the spatially inhomogeneous Landau equation have been made by many people, for instance, Lions \cite{Lions}, Villani \cite{Vi96}, Alexander-Villani \cite{AV04},  Degond-Lemou \cite{DL} and Guo \cite{G1}. In particular, Guo \cite{G1} gave the first proof for constructing the global classical solutions close to a constant equilibrium state in a periodic box, and later Strain and Guo \cite{SG06,SG} established the large time asymptotic behavior of those global solutions. Since then, the spatially inhomogeneous perturbation theory of the Landau equation around global Maxwellians was further developed in different settings, for instance, see Yu \cite{YHJ}, Carrapatoso-Tristani-Wu \cite{CTW}, Carrapatoso-Mischler \cite{CaMi}, Guo-Hwang-Jang-Ouyang \cite{GHJO}, and Duan-Liu-Sakamoto-Strain \cite{DLSS}. In the case of the whole space, the global classical solution near vacuum was also constructed by Luk \cite{Luk}. Recently, lots of research has been done into investigating the regularity of solutions to the spatially inhomogeneous Landau equation for general initial data under certain conditions, see Golse-Imbert-Mouhot-Vasseur \cite{GIMV} and Henderson-Snelson \cite{HeSn}, for instance. In this paper, we would rather consider another interesting topic on the hydrodynamic limit of the Landau equation for which quite few results are known although it has been extensively studied in the Boltzmann theory, cf.~Grad \cite{Gra}, Golse \cite{Golse} and Saint-Raymond \cite{SR}.

\subsection{Problem}
We consider the following one-dimensional Landau equation
\begin{equation}
\label{1.1}
\partial_{t}F+v_{1}\partial_{x}F=\frac{1}{\epsilon}Q(F,F),
\end{equation}
where the unknown $F=F(t,x,v)\geq0$ stands for the density
distribution function for the gas particles with space position
$x\in\mathbb{R}$ and velocity $v=(v_{1},v_{2},v_{3})\in\mathbb{R}^{3}$ at time $t>0$. On the right hand side of \eqref{1.1}, the parameter $\epsilon>0$ is the Knudsen number which is proportional to the mean free path, and the Landau collision operator $Q(\cdot,\cdot)$ is a bilinear integro-differential operator acting only on velocity variables, taking the form of
\begin{equation}
\label{1.2}
Q(F_{1},F_{2})(v)=\nabla_{v}\cdot\int_{\mathbb{R}^{3}}\phi(v-v_{*})\left\{F_{1}(v_{*})\nabla_{v}F_{2}(v)-\nabla_{v_{*}}F_{1}(v_{*})F_{2}(v)\right\}dv_{*}.
\end{equation}
The non-negative matrix $\phi$ in the integral above is given by
\begin{align}
\label{1.3}
\phi(v)=\Big(I-\frac{v\otimes v}{|v|^{2}}\Big)|v|^{\gamma+2}, \quad \gamma\geq -3,
\end{align}
where $I$ is the $3\times3$ identity matrix and $v\otimes v$ is the tensor product.  Note that \eqref{1.2} in the case
$\gamma=-3$ corresponds to the original (Fokker-Planck)-Landau collision operator for Coulomb potentials, see \cite{AV04,DL,G1}. Through the paper, we are  focused on the very soft potentials case $-3\leq\gamma<-2$,
since it is similar to treat the other cases $\gamma\geq -2$ in an easier way for which the linearized Landau operator has the spectral gap.

Formally, when the Knudsen number $\epsilon$ tends to zero, the limit of the
Landau equation \eqref{1.1} gives rise to the one-dimensional compressible Euler system
\begin{equation}
\label{1.4}
\begin{cases}
\rho_{t}+(\rho u_{1})_{x}=0,
\\
(\rho u_{1})_{t}+(\rho u_{1}^{2})_{x}+p_{x}=0,
\\
(\rho u_{i})_{t}+(\rho u_{1}u_{i})_{x}=0, ~~i=2,3,
\\
\big\{\rho (e+\frac{|u|^{2}}{2})\big\}_{t}+\big\{\rho u_{1}(e+\frac{|u|^{2}}{2})+pu_{1}\big\}_{x}=0,
\end{cases}
\end{equation}
where
\begin{equation}
\label{1.5}
\begin{cases}
\rho(t,x)=\int_{\mathbb{R}^{3}}\psi_{0}(v)F\,dv,
\\
\rho u_{i}(t,x)=\int_{\mathbb{R}^{3}}\psi_{i}(v)F\,dv, \quad \mbox{for $i=1,2,3$,}
\\
\rho(e+\frac{1}{2}|u|^{2})(t,x)=\int_{\mathbb{R}^{3}}\psi_{4}(v)F\,dv.
\end{cases}
\end{equation}
Here $\rho=\rho(t,x)>0$ is the mass density, $u=u(t,x)=(u_{1},u_{2},u_{3})$ is the fluid velocity,
$e=e(t,x)$ is the internal energy, and $p=R\rho\theta$  is the pressure, where $R$ is the gas constant that we will set to be $\frac{2}{3}$ throughout the
paper for convenience and $\theta=\theta(t,x)>0$ is the temperature related to the internal energy $e$ by $e=\frac{3}{2}R\theta=\theta$.  Moreover, the five collision invariants $\psi_{i}(v)$ $(i=0,1,2,3,4)$ are
given by
$$
\psi_{0}(v)=1, \quad \psi_{i}(v)=v_{i}~(i=1,2,3),\quad \psi_{4}(v)=\frac{1}{2}|v|^{2},
$$
satisfying
\begin{equation}
\label{1.6}
\int_{\mathbb{R}^{3}}\psi_{i}(v)Q(F,F)\,dv=0,\quad \mbox{for $i=0,1,2,3,4$.}
\end{equation}

The rigorous mathematical justification of establishing the hydrodynamic limit to the Euler system \eqref{1.4} for the Landau equation \eqref{1.1} in a general setting is an outstanding open problem in kinetic theory, which is similar to the case of the Boltzmann equation with or without angular cutoff, cf.~\cite{Golse,Gra,SR}. Regarding the topic on solutions with basic wave patterns (cf.~\cite{LP,SM}), there have been extensive studies of global existence and large time asymptotic behavior of solutions (cf.~\cite{CN,HXY,LTP1,LTP3,Yu}) and small Knudsen rate of convergence (cf.~\cite{HWY,HWWY,LX,XinZeng,Yu1}) in the context of the cutoff Boltzmann equation; some relevant literature will be reviewed in detail later on. However, to the best of our knowledge, few results on this topic are known for either the non-cutoff Boltzmann or Landau equation, essentially due to the effect of grazing singularity of both collision operators on non-trivial profiles with even small space variations connecting two distinct global Maxwellians, that makes it necessary to develop new perturbation approaches beyond the situation where solutions are close to a constant equilibrium (cf.~\cite{AMUXY,GrSt,G1}).   Recently, the first and third authors of this paper studied in \cite{DuanY} the nonlinear stability as well as the large time asymptics of rarefaction waves for the Landau equation \eqref{1.1} with Coulomb potentials. In the present work, we expect to further study the hydrodynamic limit with rarefaction waves of the one-dimensional Landau equation \eqref{1.1} as Knudsen number $\epsilon>0$ is sufficiently small.

\subsection{Macro-micro decomposition}
For our purpose above, as in \cite{LTP1,LTP2}, we define the local Maxwellian $M$ associated with the solution $F$ to the equation
\eqref{1.1} in terms of the fluid quantities of $F$ as in \eqref{1.5}  by
\begin{equation}
\label{1.7}
M=M_{[\rho,u,\theta](t,x)}(v)=\frac{\rho(t,x)}{(2\pi R\theta(t,x))^{3/2}}\exp\left(-\frac{|v-u(t,x)|^{2}}{2R\theta(t,x)}\right).
\end{equation}
We denote an $L^{2}_{v}(\mathbb{R}^{3})$ inner product as
$
\langle h,g\rangle =\int_{\mathbb{R}^{3}}h(v)g(v)\,dv.
$
Then, considering the linearized Landau operator around the local Maxwellian $M$ of the form
\begin{equation}
\label{def.LM}
L_{M}h=Q(h,M)+Q(M,h),
\end{equation}
the macroscopic kernel space
is spanned by the following
five pairwise-orthogonal base
\begin{equation}
\label{1.8}
\begin{cases}
\chi_{0}(v)=\frac{1}{\sqrt{\rho}}M,
\\
\chi_{i}(v)=\frac{v_{i}-u_{i}}{\sqrt{R\rho\theta}}M, \quad \mbox{for $i=1,2,3$,}
\\
\chi_{4}(v)=\frac{1}{\sqrt{6\rho}}\left(\frac{|v-u|^{2}}{R\theta}-3\right)M,
\\
\langle \chi_{i},\frac{\chi_{j}}{M}\rangle=\delta_{ij},
\quad i,j=0,1,2,3,4.
\end{cases}
\end{equation}
In terms of these five orthonormal functions, we define the macroscopic projection $P_{0}$
and the microscopic projection $P_{1}$ as follows
\begin{equation}
\label{1.9}
P_{0}h=\sum_{i=0}^{4}\langle h,\frac{\chi_{i}}{M}\rangle\chi_{i},\quad P_{1}h=h-P_{0}h.
\end{equation}
A function $h(v)$ is called microscopic or non-fluid if
\begin{equation}
\label{1.10}
\int_{\mathbb{R}^{3}}h(v)\psi_{i}(v)\,dv=0, \quad \mbox{for $i=0,1,2,3,4$}.
\end{equation}
Initiated by Liu-Yu \cite{LTP1} and developed by Liu-Yang-Yu \cite{LTP2}, for a non-trivial solution profile connecting two different global Maxwellians at $x=\pm\infty$,
we decompose the equation \eqref{1.1} and its solution with respect to the local Maxwellian \eqref{1.7} as
\begin{equation}
\label{1.11}
F=M+G, \quad P_{0}F=M, \quad P_{1}F=G,
\end{equation}
where the local Maxwellian $M$ as \eqref{1.7} and $G=G(t,x,v)$ represent the macroscopic and microscopic
component in the solution respectively. Then the equation \eqref{1.1} becomes
\begin{equation}
\label{1.12}
\partial_{t}(M+G)+v_{1}\partial_{x}(M+G)=\frac{1}{\epsilon}Q(G,M)+\frac{1}{\epsilon}Q(M,G)+\frac{1}{\epsilon}Q(G,G)
\end{equation}
due to $Q(M,M)=0$. Multiplying \eqref{1.12} by the collision invariants $\psi_{i}(v)$ ($i=0,1,2,3,4$)
and integrating the resulting equations with respect to $v$ over $\mathbb{R}^{3}$, one gets
the following macroscopic system
\begin{equation}
\label{1.13}
\begin{cases}
\rho_{t}+(\rho u_{1})_{x}=0,
\\
(\rho u_{1})_{t}+(\rho u_{1}^{2})_{x}+p_{x}=-\int_{\mathbb{R}^{3}} v^{2}_{1}G_{x}\,dv,
\\
(\rho u_{i})_{t}+(\rho u_{1}u_{i})_{x}=-\int_{\mathbb{R}^{3}} v_{1}v_{i}G_{x}\,dv, ~~i=2,3,
\\
\big\{\rho (\theta+\frac{|u|^{2}}{2})\big\}_{t}+\big\{\rho u_{1}(\theta+\frac{|u|^{2}}{2})+pu_{1}\big\}_{x}
=-\int_{\mathbb{R}^{3}} \frac{1}{2}v_{1}|v|^{2}G_{x}\,dv.
\end{cases}
\end{equation}
Here we have used \eqref{1.5}, \eqref{1.6} and the fact that $G_{t}$ is microscopic by \eqref{1.10}.

Applying the projection operator $P_{1}$ to \eqref{1.12} and using \eqref{1.11}, we obtain the following microscopic system
\begin{align}
\label{1.14}
\partial_{t}G+P_{1}(v_{1}\partial_{x}G)+P_{1}(v_{1}\partial_{x}M)=\frac{1}{\epsilon}L_{M}G+\frac{1}{\epsilon}Q(G,G).
\end{align}
Here the linearized operator $L_{M}$ is defined in \eqref{def.LM}. Recall that the null space $\mathcal{N}$ of $L_{M}$ is spanned by
$\chi_{i}~(i=0,1,2,3,4)$.
It follows by \eqref{1.14} that
\begin{equation}
\label{1.15}
G=\epsilon L^{-1}_{M}[P_{1}(v_{1}\partial_{x}M)]+L^{-1}_{M}\Theta,
\quad \Theta:=\epsilon \partial_{t}G+\epsilon P_{1}(v_{1}\partial_{x}G)-Q(G,G).
\end{equation}
Substituting \eqref{1.15} into \eqref{1.13},
we obtain the following fluid-type system
\begin{align}
\label{1.16}
\begin{cases}
\rho_{t}+(\rho u_{1})_{x}=0,
\\
(\rho u_{1})_{t}+(\rho u_{1}^{2})_{x}+p_{x}=\frac{4}{3}\epsilon(\mu(\theta)u_{1x})_{x}-(\int_{\mathbb{R}^{3}} v^{2}_{1}L^{-1}_{M}\Theta \,dv)_{x},
\\
(\rho u_{i})_{t}+(\rho u_{1}u_{i})_{x}=\epsilon(\mu(\theta)u_{ix})_{x}-(\int_{\mathbb{R}^{3}} v_{1}v_{i}L^{-1}_{M}\Theta \,dv)_{x}, ~~i=2,3,
\\
\big\{\rho (\theta+\frac{|u|^{2}}{2})\big\}_{t}+\big\{\rho u_{1}(\theta+\frac{|u|^{2}}{2})+pu_{1}\big\}_{x}=\epsilon(\kappa(\theta)\theta_{x})_{x}+\frac{4}{3}\epsilon(\mu(\theta)u_{1}u_{1x})_{x}
\\
\ \ \ +\epsilon(\mu(\theta)u_{2}u_{2x})_{x}+\epsilon(\mu(\theta)u_{3}u_{3x})_{x}-\frac{1}{2}(\int_{\mathbb{R}^{3}}v_{1}|v|^{2}L^{-1}_{M}\Theta \,dv)_{x}.
\end{cases}
\end{align}
Here the viscosity coefficient $\mu(\theta)>0$ and the heat conductivity coefficient $\kappa(\theta)>0$, both are smooth functions depending only on $\theta$.
The explicit formulas of  $\mu(\theta)$ and $\kappa(\theta)$ are defined by \eqref{5.3}.

\subsection{Rarefaction wave and its smooth approximation}
Now we turn to define the rarefaction wave profile to the system \eqref{1.1} as in \cite{LiuXin,LTP3,M1}. Consider the Euler system \eqref{1.4} with the state equation
$p=\frac{2}{3}\rho\theta=k_0\rho^{5/3}\exp(S)$, where $k_0=\frac{1}{2\pi e}$ and $S$ is the macroscopic entropy,
supplemented with the following Riemann initial data
\begin{equation}
\label{1.17}
(\rho,u,\theta)(t,x)|_{t=0}=(\rho^{R}_{0},u^{R}_{0},\theta^{R}_{0})(x)
=\begin{cases}
(\rho_{+},u_{+},\theta_{+}),\quad x>0,
\\
(\rho_{-},u_{-},\theta_{-}),\quad x<0.
\end{cases}
\end{equation}
Here $\rho_{\pm}>0$, $u_{\pm}=(u_{1\pm},0,0)$ and $\theta_{\pm}>0$
are assumed to be constant. It is well known that the Euler system \eqref{1.4} for $(\rho,u_{1},S)$ has three distinct eigenvalues
$$
\lambda_{i}(\rho,u_{1},S)=u_{1}+(-1)^{\frac{i+1}{2}}\sqrt{p_{\rho}(\rho,S)},~i=1,3,
\quad \lambda_{2}(\rho,u_{1},S)=u_{1},
$$
where $p_{\rho}(\rho,S)=\frac{5}{3}k_0\rho^{\frac{2}{3}}e^{S}>0$. In terms of the two Riemann invariants
of the third eigenvalue $\lambda_{3}(\rho,u_{1},S)$, we define the 3-rarefaction wave curve for the given
left constant state $(\rho_{-},u_{1-},\theta_{-})$ with $\rho_{-}>0$ and  $\theta_{-}>0$ as below (cf.~\cite{LP,SM})
\begin{align}
\label{1.18a}
R_{3}(\rho_{-},u_{1-},\theta_{-})=\{&(\rho,u_{1},\theta)\in\mathbb{R}_{+}\times\mathbb{R}\times\mathbb{R}_{+}\mid
S=S_{*},
\nonumber\\
&u_{1}-\sqrt{15k_0}e^{\frac{S}{2}}\rho^{\frac{1}{3}}=u_{1-}-\sqrt{15k_0}e^{\frac{S_{*}}{2}}\rho_{-}^{\frac{1}{3}},
~~\rho>\rho_{-},~~u_{1}>u_{1-}\}.
\end{align}
Here and to the end, $S_{*}:=S_{-}=-\frac{2}{3}\ln\rho_{-}+\ln(\frac{4}{3}\pi\theta_{-})+1$ is a constant.

Without  loss of generality, we consider only the simple 3-rarefaction wave in this paper, and  the case for 1-rarefaction wave can be treated similarly.
The 3-rarefaction wave to the Euler system \eqref{1.4} with \eqref{1.17} can be expressed explicitly by the
Riemann solution to the inviscid Burgers equation
\begin{equation}
\label{1.19}
\begin{cases}
\omega_{t}+\omega\omega_{x}=0,
\\
\omega(0,x)=
\begin{cases}
\omega_{-},\quad x<0,
\\
\omega_{+},\quad x>0.
\end{cases}
\end{cases}
\end{equation}
If two constants $\omega_{-}<\omega_{+}$ are chosen, then \eqref{1.19} admits a centered rarefaction wave solution
$\omega^{R}(x,t)=\omega^{R}(\frac{x}{t})$ connecting $\omega_{-}$ and $\omega_{+}$ (cf.~\cite{M1})
in the form of
\begin{equation*}
\omega^{R}(\frac{x}{t})=
\begin{cases}
\omega_{-},\quad \frac{x}{t}\leq\omega_{-},
\\
\frac{x}{t},\quad\omega_{-}< \frac{x}{t}\leq\omega_{+},
\\
\omega_{+},\quad \frac{x}{t}>\omega_{+}.
\end{cases}
\end{equation*}
For $(\rho_{+},u_{1+},\theta_{+})\in R_{3}(\rho_{-},u_{1-},\theta_{-})$, the 3-rarefaction wave $(\rho^{R},u^{R},\theta^{R})(\frac{x}{t})$ with
$u^{R}(\frac{x}{t})=(u_{1}^{R},u_{2}^{R},u_{3}^{R})(\frac{x}{t})$
to the Riemann problem \eqref{1.4} with \eqref{1.17} can be defined explicitly by
\begin{equation}
\label{1.20}
\begin{cases}
\lambda_{3}(\rho^{R}(\frac{x}{t}),u^{R}_{1}(\frac{x}{t}),S_{*})=
\begin{cases}
\lambda_{3}(\rho_{-},u_{1-},S_{*}),\quad \frac{x}{t}\leq \lambda_{3}(\rho_{-},u_{1-},S_{*}),
\\
\frac{x}{t}, \quad  \lambda_{3}(\rho_{-},u_{1-},S_{*})<\frac{x}{t}\leq \lambda_{3}(\rho_{+},u_{1+},S_{*}),
\\
\lambda_{3}(\rho_{+},u_{1+},S_{*}),\quad \frac{x}{t}>\lambda_{3}(\rho_{+},u_{1+},S_{*}),
\end{cases}
\\
u^{R}_{1}(\frac{x}{t})-\sqrt{15k_0}e^{\frac{S_{*}}{2}}(\rho^{R})^{\frac{1}{3}}(\frac{x}{t})
=u_{1-}-\sqrt{15k_0}e^{\frac{S_{*}}{2}}\rho_{-}^{\frac{1}{3}},\quad u^{R}_{2}=u^{R}_{3}=0,
\\
\theta^{R}(\frac{x}{t})
=\frac{3}{2}k_0e^{S_{*}}(\rho^{R})^{\frac{2}{3}}(\frac{x}{t}).
\end{cases}
\end{equation}
\par
Since the above 3-rarefaction wave is only Lipschitz continuous, we shall construct an approximate smooth rarefaction wave
to the 3-rarefaction wave defined in \eqref{1.20}. Motivated by \cite{M1,Xin}, the approximate smooth rarefaction wave can be constructed
by the Burgers equation
\begin{equation}
\label{1.21}
\begin{cases}
\overline{\omega}_{t}+\overline{\omega}\,\overline{\omega}_{x}=0,
\\
\overline{\omega}(0,x)=\overline{\omega}_{\delta}(x)=\overline{\omega}(\frac{x}{\delta})=\frac{\omega_{+}+\omega_{-}}{2}+\frac{\omega_{+}
-\omega_{-}}{2}\tanh(\frac{x}{\delta}),
\end{cases}
\end{equation}
where $\delta>0$  is a small constant depending on the Knudsen number $\epsilon$. In fact, as given in \eqref{3.3} later on, we will choose $\delta=\frac{1}{k}\epsilon^{\frac{3}{5}-\frac{2}{5}a}$ for a suitably small constant $k>0$ independent of $\epsilon$.
By the method of characteristic curves,
the solution $\overline{\omega}_{\delta}(t,x)$ to the problem \eqref{1.21} can be given  by
\begin{equation*}
\overline{\omega}_{\delta}(t,x)=\overline{\omega}_{\delta}(x_{0}(t,x)),\quad
x=x_{0}(t,x)+\overline{\omega}_{\delta}(x_{0}(t,x))t.
\end{equation*}
The properties of $\overline{\omega}_{\delta}(t,x)$ are given by Lemma \ref{lem5.1} in Section \ref{sec.5}.

Correspondingly, the approximate smooth 3-rarefaction wave  $(\bar{\rho}_{\delta},\bar{u}_{\delta},\bar{\theta}_{\delta})(t,x)$ to \eqref{1.20} for the
Euler system \eqref{1.4} and \eqref{1.17} can be defined by
\begin{equation}
\label{1.23}
\begin{cases}
\overline{\omega}_{\delta}(t,x)=\lambda_{3}(\bar{\rho}_{\delta}(t,x),\bar{u}_{1\delta}(t,x),S_{*}), \quad
\omega_{\pm}=\lambda_{3}(\rho_{\pm},u_{1\pm},S_{*}),
\\
\bar{u}_{1\delta}(t,x)-\sqrt{15k_0}e^{\frac{S_{*}}{2}}\bar{\rho}_{\delta}^{\frac{1}{3}}(t,x)
=u_{1-}-\sqrt{15k_0}e^{\frac{S_{*}}{2}}\rho_{-}^{\frac{1}{3}},\quad \bar{u}_{2\delta}=\bar{u}_{3\delta}=0,\\
\bar{\theta}_{\delta}(t,x)
=\frac{3}{2}k_0e^{S_{*}}\bar{\rho}_{\delta}^{\frac{2}{3}}(t,x),
\\
\lim\limits_{x\to \pm\infty}(\bar{\rho}_{\delta},\bar{u}_{1\delta},\bar{\theta}_{\delta})(t,x)=(\rho_{\pm},u_{1\pm},\theta_{\pm}),
\quad (\rho_{+},u_{1+},\theta_{+})\in R_{3}(\rho_{-},u_{1-},\theta_{-}),
\end{cases}
\end{equation}
where $\overline{\omega}_{\delta}(t,x)$ is the solution of Burger equation \eqref{1.21}.
From now on, we shall omit the explicit dependence of $(\bar{\rho}_{\delta},\bar{u}_{\delta},\bar{\theta}_{\delta})(t,x)$ on $\delta$ and denote it by $(\bar{\rho},\bar{u},\bar{\theta})(t,x)$ for simplicity. Then the approximate smooth 3-rarefaction wave $(\bar{\rho},\bar{u},\bar{\theta})(t,x)$
satisfies the following Euler system
\begin{equation}
\label{1.24}
\begin{cases}
\bar{\rho}_{t}+(\bar{\rho}\bar{u}_{1})_{x}=0,
\\
(\bar{\rho} \bar{u}_{1})_{t}+(\bar{\rho}\bar{u}_{1}^{2})_{x}+\bar{p}_{x}=0,
\\
(\bar{\rho} \bar{u}_{i})_{t}+(\bar{\rho}\bar{u}_{1}\bar{u}_{i})_{x}=0,\quad {i=2,3},
\\
(\bar{\rho} \bar{\theta})_{t}+(\bar{\rho}\bar{u}_{1}\bar{\theta})_{x}+\bar{p}\bar{u}_{1x}=0,
\end{cases}
\end{equation}
where $\bar{p}=R\bar{\rho}\bar{\theta}$. Properties of $(\bar{\rho},\bar{u},\bar{\theta})(t,x)$ are given in Lemma
\ref{lem5.2} in Section \ref{sec.5}.
In terms of the approximate rarefaction wave $(\bar{\rho},\bar{u},\bar{\theta})(t,x)$, we denote
\begin{equation*}
\overline{M}=M_{[\bar{\rho},\bar{u},\bar{\theta}](t,x)}(v)
=\frac{\bar{\rho}(t,x)}{(2\pi R\bar{\theta}(t,x))^{3/2}}\exp\Big(-\frac{|v-\bar{u}(t,x)|^{2}}{2R\bar{\theta}(t,x)}\Big).
\end{equation*}
For the technical reason as in \cite{XinZeng}, we choose the far-field data $(\rho_+,u_+,\theta_+)$ and $(\rho_-,u_-,\theta_-)$
in \eqref{1.17} to be close enough to the constant state $(1,0,\frac{3}{2})$ such that the approximate smooth rarefaction wave further satisfies that
\begin{equation}
\label{1.26a}
\left\{\begin{aligned}
 &  \eta_{0}:=\sup_{t\geq 0,x\in \R}\{|\bar{\rho}(t,x)-1|+|\bar{u}(t,x)|+|\bar{\theta}(t,x)-\frac{3}{2}|\}\ \text{is small},\\
& \frac{1}{2}\sup_{t\geq0,x\in\mathbb{R}}\bar{\theta}(t,x)<\frac{3}{2}<\inf_{t\geq0,x\in\mathbb{R}}\bar{\theta}(t,x).
\end{aligned}\right.
\end{equation}
As in \cite{LTP3}, associated with the constant state $(1,0,\frac{3}{2})$, we will use throughout the paper a global Maxwellian
$$
\mu=M_{[1,0,\frac{3}{2}]}(v)=(2\pi)^{-\frac{3}{2}}\exp\{-|v|^{2}/2\}.
$$

\subsection{Main result}
With all the above preparations, the main result of the paper can be stated as follows.

\begin{theorem}\label{thm1.1}
Let $-3\leq \gamma<-2$ in \eqref{1.3}. Assume that the far-field data $(\rho_\pm,u_\pm,\theta_\pm)$ satisfy $u_{2\pm}=u_{3\pm}=0$ and $(\rho_+,u_{1+},\theta_+)\in R_3(\rho_-,u_{1-},\theta_-)$ in \eqref{1.18a}, and $\delta_r:=|\rho_{+}-\rho_{-}|+|u_{+}-u_{-}|+|\theta_{+}-\theta_{-}|$ is the wave strength. Let $(\rho^{R},u^{R},\theta^{R})(\frac{x}{t})$ be the Riemann solution \eqref{1.20} of the Euler system \eqref{1.4} and \eqref{1.17}, and $(\bar{\rho},\bar{u},\bar{\theta})(t,x)$ be the corresponding approximate smooth profile satisfying \eqref{1.23}, \eqref{1.24} and \eqref{1.26a} induced by the Burgers equation \eqref{1.21} with $\delta=\frac{1}{k}\epsilon^{\frac{3}{5}-\frac{2}{5}a}$ for $\frac{2}{3}\leq a\leq 1$ and $k>0$. Then, there are small constants $\epsilon_0>0$, $\overline{\eta}_0>0$ and $k>0$ such that for any $\epsilon\in (0,\epsilon_0)$ and any $\delta_r>0$ and any $\eta_{0}>0$ with $\delta_r+\eta_0<\overline{\eta}_0$, there exists a global-in-time solution  $F(t,x,v)\geq 0$ to the Landau equation \eqref{1.1} with initial data
\begin{equation}
\label{thm.conid}
F(0,x,v)=M_{[\bar{\rho},\bar{u},\bar{\theta}](0,\frac{x}{\epsilon^a})}(v)
\end{equation}
such that the following things hold true:
\begin{itemize}
  \item[(a)] Under the scaling transformation $(\tau,y)=(\epsilon^{-a}t,\epsilon^{-a}x)$ as in \eqref{2.1}, there are an energy functional $\mathcal{E}_{2}(\tau)$ and a corresponding energy dissipation functional $\mathcal{D}_{2}(\tau)$, given by \eqref{2.18} and \eqref{2.19} in terms of $(\tau,y)$ coordinates, respectively, such that
\begin{align}\label{thm.enineq}
\sup_{\tau\geq 0}\mathcal{E}_{2}(\tau)
+\int^{+\infty}_{0}\mathcal{D}_{2}(\tau)\,d\tau\leq Ck^{\frac{1}{6}}\epsilon^{\frac{6}{5}-\frac{4}{5}a}.
\end{align}
  \item[(b)] For any $l>0$, there is a constant $C_{l,k}>0$, independent of $\epsilon$, such that
\begin{equation}
\label{1.27}
\sup_{t\geq l}\|\frac{F(t,x,v)-M_{[\rho^{R},u^{R},\theta^{R}](x/t)}(v)}{\sqrt{\mu}}\|_{L_{x}^{\infty}L_{v}^{2}}\leq C_{l,k}\epsilon^{\frac{3}{5}-\frac{2}{5}a}|\ln\epsilon|.
\end{equation}
\end{itemize}
\end{theorem}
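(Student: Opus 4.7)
The plan is to reformulate the problem under the scaling $(\tau,y)=(\epsilon^{-a}t,\epsilon^{-a}x)$, so that \eqref{1.1} becomes $\partial_\tau F+v_1\partial_y F=\epsilon^{a-1}Q(F,F)$ with effective Knudsen number $\epsilon^{1-a}\in[\epsilon^{1/3},1]$, and then to run a refined energy method around $\overline M$ that simultaneously controls the fluid perturbation and the microscopic part. Writing $F=\overline M+G$ is not quite right because $\overline M$ is not the local Maxwellian of $F$; instead, as in \cite{LTP2}, I keep the decomposition $F=M+G$ of \eqref{1.11} relative to $F$'s own local Maxwellian and set $(\tilde\rho,\tilde u,\tilde\theta)=(\rho,u,\theta)-(\bar\rho,\bar u,\bar\theta)$. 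Subtracting \eqref{1.24} from \eqref{1.16} yields an almost Navier--Stokes system for $(\tilde\rho,\tilde u,\tilde\theta)$ with effective viscosity $\epsilon^{1-a}$ in $(\tau,y)$, forced by the wave residual produced when $(\bar\rho,\bar u,\bar\theta)$ is inserted into \eqref{1.16}; the size of this residual is governed by the Lemma~\ref{lem5.1}--\ref{lem5.2} bounds on $\bar u_{1y}$ and its higher derivatives together with the $O(\epsilon^{1-a})$ viscous corrections.

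Because the initial data \eqref{thm.conid} are forced to coincide with $\overline M$ at $\tau=0$, the initial perturbation is identically zero, which removes any initial layer and is essential for a clean rate. I then construct the weighted $H^2$ energy $\mathcal E_2(\tau)$ and its dissipation $\mathcal D_2(\tau)$ by combining three ingredients: an $L^2$ estimate obtained through thermodynamic/relative-entropy multipliers as in \cite{LTP2}, using the good sign $\bar u_{1y}\ge 0$ of a 3-rarefaction to absorb the dominant bulk source; $\partial_y^k$ estimates on the fluid system for $k=1,2$; and coupled $\partial_y^k$ and $v$-weighted estimates on the microscopic equation obtained by applying $P_1$ to \eqref{1.14}. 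A continuity/bootstrap argument, together with local existence and the smallness of $\delta_r+\eta_0$, then promotes these a~priori bounds to a global solution.

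The main obstacle is the very soft Coulomb range $-3\le\gamma<-2$, where $L_M$ has no spectral gap and only the anisotropic coercivity $-\langle L_Mg,g\rangle\gtrsim |g|_\sigma^2$ of Guo~\cite{G1} is available. Each $\partial^\alpha G$ thus produces only the weaker $\sigma$-weighted dissipation, and the commutator terms generated because $L_M$ has variable coefficients (it differs from $L_\mu$ since $M$ depends on $(\tau,y)$) must be absorbed using the smallness of $\eta_0$ from \eqref{1.26a} together with the wave decay of Lemma~\ref{lem5.2}; polynomial/exponential velocity weights are needed to close these commutator estimates. Balancing the rarefaction source of size $\bar u_{1y}\sim \delta$, the viscous source of size $\epsilon^{1-a}\bar u_{1yy}$, and the dissipation $\epsilon^{1-a}\mathcal D_2$, with the choice $\delta=k^{-1}\epsilon^{3/5-2a/5}$, gives $\mathcal E_2+\int_0^{+\infty}\mathcal D_2\,d\tau\lesssim \delta^{2}\sim \epsilon^{6/5-4a/5}$, and tracking the $k$-dependence of the nonlinear error terms yields the $k^{1/6}$ prefactor in \eqref{thm.enineq}.

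For part~(b) I would decompose
\begin{align*}
\frac{F(t,x,v)-M_{[\rho^R,u^R,\theta^R](x/t)}(v)}{\sqrt\mu}
=\frac{F-\overline M}{\sqrt\mu}+\frac{\overline M-M_{[\rho^R,u^R,\theta^R](x/t)}}{\sqrt\mu}.
\end{align*}
The first piece is exactly what is controlled by $\mathcal E_2$; undoing the scaling and applying the Sobolev embedding $H^1_y\hookrightarrow L^\infty_y$ to \eqref{thm.enineq} produces an $L^\infty_xL^2_v$ bound of order $\sqrt{\mathcal E_2}\sim \epsilon^{3/5-2a/5}$. The second piece is a purely hydrodynamic comparison: the standard estimate, for $t\ge l>0$, of the smooth approximate rarefaction wave against the exact Riemann fan gives $\sup_{x\in\R}|(\bar\rho,\bar u,\bar\theta)(t,x)-(\rho^R,u^R,\theta^R)(x/t)|\le C_l\,\delta|\ln\delta|$, which when converted through the Maxwellian supplies the logarithmic factor and yields \eqref{1.27}.
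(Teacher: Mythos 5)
Your overall architecture (scaling, macro--micro decomposition $F=M+G$ around the solution's own local Maxwellian, entropy--entropy flux pair exploiting $\bar u_{1y}>0$, Guo-type $\sigma$-coercivity with velocity weights for the Coulomb case, bootstrap, and the two-piece decomposition for part (b)) matches the paper in outline, but there is a genuine gap at the heart of the microscopic estimates. You propose to estimate $G$ directly from \eqref{1.14}, whose source contains $P_{1}(v_{1}\partial_{y}M)$ and hence the wave gradients $(\bar u_{1y},\bar\theta_{y})$. Pairing this source against the dissipation $\epsilon^{a-1}\|G\|_{\sigma}^{2}$ by Cauchy--Schwarz leaves a term of size $\epsilon^{1-a}\|(\bar u_{1y},\bar\theta_{y})\|^{2}\sim \epsilon\,(\delta+\epsilon^{a}\tau)^{-1}$, whose time integral diverges logarithmically as $\tau\to\infty$; so no uniform-in-time bound, let alone the rate $\epsilon^{\frac65-\frac45 a}$ of \eqref{thm.enineq}, can be closed this way. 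The paper resolves exactly this by subtracting from $G$ the first-order Chapman--Enskog correction built from the wave gradients, $\overline G=\epsilon^{1-a}L_{M}^{-1}P_{1}v_{1}M\{\frac{|v-u|^{2}\bar\theta_{y}}{2R\theta^{2}}+\frac{(v-u)\cdot\bar u_{y}}{R\theta}\}$, and working with $\widetilde G=G-\overline G=\sqrt{\mu}f$; after this cancellation the remaining sources involve $\bar u_{1yy},\bar\theta_{yy}$ and products like $(\bar u_{1y},\bar\theta_{y})(u_{\tau},\theta_{\tau})$, which decay like $\epsilon^{\frac75+\frac{a}{15}}(\delta+\epsilon^{a}\tau)^{-\frac43}$ and integrate to $O(k^{1/3}\epsilon^{\frac65-\frac45 a})$. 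Your plan contains no substitute for this cancellation, so the claimed balance ``$\mathcal E_{2}\lesssim\delta^{2}$'' is not justified (and the heuristic ``source of size $\bar u_{1y}\sim\delta$'' is not the correct scaling of the source either).

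Two further points you leave underdetermined, which the paper settles concretely. First, since $L_{M}^{-1}$ is unbounded on $L^{2}_{v}$ for very soft potentials, the terms $\int v_iv_jL_M^{-1}\Theta\,dv$ in the fluid-type system cannot be handled by generic commutator arguments; the paper normalizes the remaining microscopic part against the \emph{global} Maxwellian ($\widetilde G=\sqrt{\mu}f$, so Guo's weighted estimates with $w=\langle v\rangle^{\gamma+2}$ apply verbatim) and uses the Burnett functions $A_j,B_{ij}$ with their fast Maxwellian decay to rewrite and bound all $L_M^{-1}$ terms, whereas your ``polynomial/exponential weights to absorb commutators of $L_M$'' is not a workable scheme as stated for $\gamma=-3$. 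Second, the closure at second order forces an $\epsilon$-dependent design of the functionals (the factors $\epsilon^{2-2a}$ in $\mathcal E_{2}$ and the split $\epsilon^{1-a}$ vs.\ $\epsilon^{a-1}$ in $\mathcal D_{2}$), and it is precisely this weighting, together with the requirements $\epsilon^{a-1}\geq\epsilon^{1-a}$ and $\frac35-\frac25a\geq0$, $\frac35a-\frac25\geq0$, that both closes the highest-order estimates and explains the restriction $a\in[\frac23,1]$; your uniform ``weighted $H^{2}$ energy'' does not account for this, and the $k^{1/6}$ prefactor comes from the interplay between the a priori assumption $\mathcal E_{2}\leq k^{1/6}\epsilon^{\frac65-\frac45a}$ and $\delta=k^{-1}\epsilon^{\frac35-\frac25a}$, not from ``tracking nonlinear errors'' alone. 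Your treatment of part (b) is essentially the paper's, except that $F-\overline M$ also contains $\overline G$, which needs its own (easy) $L^{\infty}_{y}L^{2}_{v}$ bound rather than being ``exactly what is controlled by $\mathcal E_{2}$.''
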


\begin{remark}
To the best of our knowledge, Theorem \ref{thm1.1} seems to provide the first result regarding the hydrodynamic limit with rarefaction waves for the Landau equation. It remains open to obtain similar results in case of other kinds of basic wave patterns such as shock wave and contact discontinuity. Moreover, we expect that the current work may shed a little light on the study of the same topic on the non-cutoff Boltzmann equation for which the grazing collision effect plays a role similar to the Landau equation.
\end{remark}

\begin{remark}
Estimate \eqref{1.27} shows that under the condition \eqref{thm.conid} on initial data, the uniform convergence rate in small Knudsen number $\epsilon>0$ can be variable with respect to the scaling parameter $a\in[\frac{2}{3},1]$. In particular, choosing $a=\frac{2}{3}$ can give the fastest convergence rate $\epsilon^{\frac{1}{3}}|\ln\epsilon|$.
\end{remark}

\begin{remark}
It should be pointed out that the scaling argument was first used in  Xin \cite{Xin} to study under the transformation $(\tau,y)=(\epsilon^{-\frac{3}{4}}t,\epsilon^{-\frac{3}{4}}x)$ the vanishing viscosity limit to rarefaction waves for the one-dimensional compressible Navier-Stokes system, where the convergence rate is $\epsilon^{\frac{1}{4}}|\ln\epsilon|$. Later, Xin-Zeng \cite{XinZeng} justified the hydrodynamic limit with rarefaction waves of the Boltzmann equation for the hard sphere model with the convergence rate
$\epsilon^{\frac{1}{5}}|\ln\epsilon|$ through the scaling transformation $(\tau,y)=(\epsilon^{-1}t,\epsilon^{-1}x)$;
this convergence rate was later improved by Li \cite{LX} to be $\epsilon^{\frac{1}{3}}|\ln\epsilon|^{2}$ under the scaling $(\tau,y)=(\epsilon^{-\frac{2}{3}}t,\epsilon^{-\frac{2}{3}}x)$.
\end{remark}

\subsection{Relevant literature}
Let's review some works related to the study in this paper. Mathematically it is an important and challenging problem to rigorously justify the hydrodynamic limit of kinetic equations in a general setting. Great contributions have been made into different topics of the Boltzmann equation with cutoff. We only refer readers to \cite{Golse,Gra,SR} mentioned before, as well as two recent progresses \cite{EGKM,EGM} and reference therein, and also refer to \cite{Sone} for numerical investigations. Thus, we mainly focus on those known results on the limit of the Boltzmann equation to the compressible Euler system admitting solutions of basic wave patterns, such as rarefaction waves, contact discontinuities and shock waves. Particularly, Yu \cite{Yu1} first established the validity of hydrodynamic limit of the Boltzmann equation for the hard-sphere model when the solution of the Euler system contains only the non-interacting shocks. Precisely, he showed that the Boltzmann solution converges to a local Maxwellian defined by the solution of the Euler system uniformly away from the shock in any fixed time interval. Later, Huang-Wang-Yang \cite{HWY} proved the hydrodynamic limit to a single contact discontinuity wave, and Xin-Zeng \cite{XinZeng} showed the hydrodynamic limit to the Euler system with non-interacting rarefaction waves. As mentioned before, the convergence rate in \cite{XinZeng} was improved by Li \cite{LX} through a different scaling transformation.
Furthermore, Huang-Wang-Wang-Yang \cite{HWWY} obtained the hydrodynamic limit in the general setting of Riemann solutions that contains the superposition of shock, rarefaction wave and contact discontinuity.

Although the hydrodynamic limit from the Boltzmann equation for the hard-sphere model to the Euler system with basic wave patterns has been greatly studied  as mentioned above, to the best of our knowledge there are few results on the same topic for the Landau equation or the non-cutoff Boltzmann equation when grazing collisions of particles are dominated.
Notice that the cutoff Boltzmann operator is an integral one without angular singularity while the Landau operator or the non-cutoff Boltzmann operator features the velocity diffusion,
so it is formally much harder to treat the latter case for the hydrodynamic limit to the non-trivial profiles with space variations.

In this paper, we prove the existence of global-in-time solutions to
the one-dimensional Landau equation with suitable initial data as Knudsen number $\epsilon>0$
is sufficiently small. And the solution of the Landau equation converges to the local Maxwellian defined by the
rarefaction wave of the Euler system uniformly away from $t=0$ as  $\epsilon\to 0$.
Moreover, we obtain the uniform convergence rate $\epsilon^{\frac{3}{5}-\frac{2}{5}a}|\ln\epsilon|$ with
$a\in[\frac{2}{3},1]$ by using the scaling transformation $y=\epsilon^{-a}x$ and $\tau=\epsilon^{-a}t$.
It should be pointed out that the energy estimates in the current work are performed in the Eulerian coordinates instead of the Lagrangian coordinates as used in \cite{HWY,HWWY,LX,XinZeng}.

\subsection{Main strategy of the proof}
In what follows we present a few key points on the proof of the main result. In fact, the main strategy is based on a scaling transformation of the independent variables and
the decomposition of the solution for the Landau equation with respect to the local Maxwellian that was initiated by Liu-Yu \cite{LTP1} and developed by Liu-Yang-Yu \cite{LTP2} in the Boltzmann theory. We thus can make use of the macro-micro
decomposition to rewrite the Landau equation as the form of the compressible Navier-Stokes-type system so that
the analysis in the context of the viscous conservation laws can be applied to capture the dissipation of the fluid part around wave patterns. Since we are concerned with the convergence of
the solution of Landau equation to the local Maxwellian defined by rarefaction waves of the Euler system, it gives rise to more analytic difficulties than the study of convergence to a global Maxwellian as $\epsilon\to 0$.
Similar for showing the large time asympotics in \cite{DuanY}, the term $\|(\bar{u}_{x},\bar{\theta}_x)\|^{2}$ is not integrable with respect to time $t$. Hence we need to consider the subtraction of $G(t,x,v)$ by $\overline{G}(t,x,v)$ as \eqref{2.3}
to cancel the slow time decay terms. However, unlike the Boltzmann equation with cutoff potentials, the inverse of the linearized operator $L^{-1}_{M}$ defined as \eqref{1.15}
is unbounded in $L^{2}(\mathbb{R}^{3})$, which leads to considerable difficulties in the analysis.
In order to handle the terms involving $L^{-1}_{M}$, we will
apply the Burnett functions and the fast decay properties about the velocity of the Burnett functions.
As in \cite{G1} for the dynamical stability of global Maxwellians, we need use the weight function $w=\langle v\rangle^{\gamma+2}$ as \eqref{2.14} to overcome the dissipation deficiency in case of Coulomb potentials and handle
velocity derivatives of the free transport term $v_{1}\partial_{x}f$.
Furthermore, we use the decompositions $F=M+\overline{G}+\sqrt{\mu}f$ as in \cite{DuanY} to
improve the decompositions in \cite{LX,XinZeng} such that we can simplify the related energy estimates, and some basic estimates developed by Guo \cite{G1} around global Maxwellians can be applied to treat the derivatives estimates conveniently.

The new scaling transformation of the independent variables takes the form of $y=\epsilon^{-a}x$, $\tau=\epsilon^{-a}t$ involving with a free parameter $a\in[\frac{2}{3},1]$. The result in Theorem \ref{thm1.1} shows that
the solution converges to the local Maxwellian defined by the rarefaction wave of the Euler system at a rate
$\epsilon^{\frac{3}{5}-\frac{2}{5}a}|\ln\epsilon|$. In particular, we can obtain the fastest convergence rate $\epsilon^{\frac{1}{3}}|\ln\epsilon|$
if one takes $a=\frac{2}{3}$. As mentioned before, the scaling argument through the change of variables $y=\epsilon^{-\frac{3}{4}}x$ and $\tau=\epsilon^{-\frac{3}{4}}t$  was first used in \cite{Xin} to study the zero dissipation limit to the rarefaction wave for the one-dimensional compressible Navier-Stokes system, where the obtained convergence rate is $\epsilon^{\frac{1}{4}}|\ln\epsilon|$. Note that if one takes $a=\frac{3}{4}$ in terms of Theorem \ref{thm1.1}, then
we obtain the convergence rate $\epsilon^{\frac{3}{10}}|\ln\epsilon|$ which is still a little sharper than the one in \cite{Xin}. This is basically due to the detailed energy analysis  such that both the lower and higher order estimates possess the same convergence rate of the Knudsen number $\epsilon$. We also notice that under the scaling transformations
$y=\epsilon^{-1}x$, $\tau=\epsilon^{-1}t$ in \cite{XinZeng} and $y=\epsilon^{-\frac{2}{3}}x$, $\tau=\epsilon^{-\frac{2}{3}}t$ in \cite{LX}
for the Boltzmann equation with hard sphere model, the convergence rates $\epsilon^{\frac{1}{5}}|\ln\epsilon|$  and $\epsilon^{\frac{1}{3}}|\ln\epsilon|^{2}$ are obtained respectively.
Therefore, Theorem \ref{thm1.1} also implies that the results of \cite{Xin,XinZeng,LX} can be improved to give rise to the faster rate $\epsilon^{\frac{1}{3}}|\ln\epsilon|$ by choosing $a=\frac{2}{3}$. At this moment,  we would remark that we expect that such scaling argument also could be applied to study convergence to
basic wave patterns (i.e., rarefaction waves, contact waves, and shock waves) for the Boltzmann equation, Navier-Stokes system, radiative hydrodynamic equations and many other related models.
Once we use the scaling transformation \eqref{2.1} for an arbitrary parameter $a\in [\frac{2}{3},1]$, we have to deal with some difficulties caused by the higher order derivatives estimates such as \eqref{3.85}. For this purpose, we design the energy functional $\mathcal{E}_{2}(\tau)$ and the corresponding dissipation functional $\mathcal{D}_{2}(\tau)$  involving the Knudsen number $\epsilon$, see \eqref{2.18} and \eqref{2.19}. The desired goal is to obtain the uniform a priori estimate \eqref{thm.enineq} and then derive the convergence rate \eqref{1.27}.

\subsection{Organization of the paper}
The rest of this paper is arranged as follows. In Section \ref{sec.2}, we will
reformulate the system \eqref{1.1} and introduce a scaling for the new independent variable and the perturbation.
In Section \ref{sec.3}, we will establish the a priori estimates including low order energy estimates, high order energy estimates and the weighted energy
estimates. In Section \ref{sec.4}, we will establish the existence of global-in-time solutions as well as the convergence to the local Maxwellian defined by the rarefaction wave of the Euler system uniformly away from $t=0$ as $\epsilon\to 0$. In the appendix Section \ref{sec.5}, we will give some basic estimates frequently used in the previous sections.

\section{Reformulation of the problem}\label{sec.2}

\subsection{Scaling and reformultion}
In this section, we will reformulate the system and introduce a scaling for the independent variable
and the perturbation. Firstly, we define the scaled independent variables
\begin{equation}
\label{2.1}
y=\frac{x}{\epsilon^{a}}, \quad \tau=\frac{t}{\epsilon^{a}}, \quad \mbox{for} \quad a\in[\frac{2}{3},1].
\end{equation}
Here the range of the constant $a$ is determined by \eqref{3.92} and \eqref{4.2}.
\par
Correspondingly, we set the scaled perturbation $(\widetilde{\rho},\widetilde{u},\widetilde{\theta})=
(\widetilde{\rho},\widetilde{u},\widetilde{\theta})(\tau,y)$
and $\widetilde{G}=\widetilde{G}(\tau,y,v)$ as
\begin{equation}
\label{2.2}
\begin{cases}
\widetilde{\rho}=\rho(t,x)-\bar{\rho}(t,x),
\\
\widetilde{u}=u(t,x)-\bar{u}(t,x),
\\
\widetilde{\theta}=\theta(t,x)-\bar{\theta}(t,x),
\\
\widetilde{G}=G(t,x,v)-\overline{G}(t,x,v),\quad\widetilde{G}=\sqrt{\mu}f(\tau,y,v).
\end{cases}
\end{equation}
Here the term $\overline{G}=\overline{G}(t,x,v)$ is defined as
\begin{equation}
\label{2.3}
\overline{G}=\epsilon^{1-a} L_{M}^{-1}P_{1}v_{1}M\big\{\frac{|v-u|^{2}
\bar{\theta}_{y}}{2R\theta^{2}}+\frac{(v-u)\cdot\bar{u}_{y}}{R\theta}\big\}.
\end{equation}
We remark that in \eqref{2.2}, $\overline{G}$ is subtracted from $G$ because the time decay of
$\|(\bar{u}_{y},\bar{\theta}_{y})\|^{2}$  is $\epsilon^{a}(\delta+\epsilon^{a}\tau)^{-1}$ by Lemma \ref{lem5.3}, which is not integrable about the time $\tau$.
\par
Subtracting \eqref{1.24} from system \eqref{1.13} and using the scaling \eqref{2.1}, we can obtain
\begin{equation}
\label{2.4}
\begin{cases}
\widetilde{\rho}_{\tau}+\bar{\rho} \widetilde{u}_{1y}+\bar{\rho}_{y} \widetilde{u}_{1}=-J_{1},
\\
\widetilde{u}_{1\tau}+\bar{u}_{1}\widetilde{u}_{1y}
+\frac{2}{3}\widetilde{\theta}_{y}+\frac{2\bar{\theta}}{3\bar{\rho}}\widetilde{\rho}_{y}
=-J_{2}-\frac{1}{\rho}\int_{\mathbb{R}^3}  v^{2}_{1}G_{y}dv,
\\
\widetilde{u}_{i\tau}+\widetilde{u}_{1}\widetilde{u}_{iy}+\bar{u}_{1}\widetilde{u}_{iy}
=-\frac{1}{\rho}\int_{\mathbb{R}^3}  v_{1}v_{i}G_{y}dv, ~~i=2,3,
\\
\widetilde{\theta}_{\tau}+\frac{2}{3}\bar{\theta}
\widetilde{u}_{1y}+\bar{u}_{1}\widetilde{\theta}_{y}=-J_{3}-\frac{1}{\rho}\int_{\mathbb{R}^3} \frac{1}{2}v_{1}v\cdot(v-2u)G_{y}dv,
\end{cases}
\end{equation}
where
\begin{equation}
\label{2.5}
\begin{cases}
J_{1}=(\widetilde{\rho}\widetilde{u}_{1})_{y}+\bar{u}_{1}\widetilde{\rho}_{y}+\bar{u}_{1y}\widetilde{\rho},
\\
J_{2}=\widetilde{u}_{1}\bar{u}_{1y}+\widetilde{u}_{1}
\widetilde{u}_{1y}+\frac{2}{3}\rho_{y}\frac{\bar{\rho}\widetilde{\theta}-\widetilde{\rho}\bar{\theta}}
{\rho\bar{\rho}},
\\
J_{3}=\frac{2}{3}(\widetilde{\theta}\bar{u}_{1y}+\widetilde{\theta}\widetilde{u}_{1y})
+(\widetilde{\theta}_{y}\widetilde{u}_{1}+\bar{\theta}_{y}\widetilde{u}_{1}).
\end{cases}
\end{equation}
Moreover, we also get from \eqref{1.16}, \eqref{1.24} and \eqref{2.1} that
\begin{equation}
\label{2.6}
\begin{cases}
\widetilde{\rho}_{\tau}+\bar{\rho} \widetilde{u}_{1y}+\bar{\rho}_{y} \widetilde{u}_{1}=-J_{1},
\\
\widetilde{u}_{1\tau}+\bar{u}_{1}\widetilde{u}_{1y}
+\frac{2}{3}\widetilde{\theta}_{y}+\frac{2\bar{\theta}}{3\bar{\rho}}\widetilde{\rho}_{y}
=-J_{2}+\epsilon^{1-a}\frac{4}{3\rho}(\mu(\theta)u_{1y})_{y}-\frac{1}{\rho}(\int_{\mathbb{R}^{3}} v^{2}_{1}L^{-1}_{M}\Theta dv)_{y},
\\
\widetilde{u}_{i\tau}+\widetilde{u}_{1}\widetilde{u}_{iy}+\bar{u}_{1}\widetilde{u}_{iy}
=\epsilon^{1-a}\frac{1}{\rho}(\mu(\theta)u_{iy})_{y}-\frac{1}{\rho}(\int_{\mathbb{R}^3} v_{1}v_{i}L^{-1}_{M}\Theta dv)_{y}, ~~i=2,3,
\\
\widetilde{\theta}_{\tau}+\frac{2}{3}\bar{\theta}
\widetilde{u}_{1y}+\bar{u}_{1}\widetilde{\theta}_{y}=-J_{3}+\epsilon^{1-a}\frac{1}{\rho}(\kappa(\theta)\theta_{y})_{y}
+\epsilon^{1-a}\frac{4}{3\rho}\mu(\theta)u^{2}_{1y}
\\
\hspace{0.5cm}+\epsilon^{1-a}\frac{1}{\rho}\mu(\theta)(u^{2}_{2y}+u^{2}_{3y})-\frac{1}{\rho}
(\int_{\mathbb{R}^3}v_{1}\frac{|v|^{2}}{2}L^{-1}_{M}\Theta dv)_{y}+\frac{1}{\rho}u(\int_{\mathbb{R}^3} v_{1}v L^{-1}_{M}\Theta dv)_{y}.
\end{cases}
\end{equation}
Here $J_{1},J_{2}$ and $J_{3}$ are defined in \eqref{2.5} and $\Theta$ can be rewritten as
\begin{equation*}
\Theta=\epsilon^{1-a}\partial_{\tau}G+\epsilon^{1-a}P_{1}(v_{1}\partial_{y}G)-Q(G,G).
\end{equation*}
On the other hand, we need to derive the equation of the microscopic component $f$ as in \eqref{2.2}. For this, we first denote
\begin{equation}
\label{2.8}
\Gamma(h,g)=\frac{1}{\sqrt{\mu}}Q(\sqrt{\mu}h,\sqrt{\mu}g),
\quad \mathcal{L}h=\Gamma(h,\sqrt{\mu})+\Gamma(\sqrt{\mu},h).
\end{equation}
This together with the definition of $L_{M}$ in \eqref{1.14} imply that
\begin{equation}
\label{2.9}
\frac{1}{\sqrt{\mu}}L_{M}(\sqrt{\mu}f)=
\frac{1}{\sqrt{\mu}}\{Q(M,\sqrt{\mu}f)+Q(\sqrt{\mu}f,M)\}
=\mathcal{L}f+\Gamma(f,\frac{M-\mu}{\sqrt{\mu}})+
\Gamma(\frac{M-\mu}{\sqrt{\mu}},f).
\end{equation}
From \eqref{1.14} and \eqref{2.1}, one can easily see that
\begin{equation}
\label{2.10}
\partial_{\tau}G+P_{1}(v_{1}\partial_{y}G)+P_{1}(v_{1}\partial_{y}M)=\epsilon^{a-1}L_{M}G+\epsilon^{a-1}Q(G,G).
\end{equation}
By using \eqref{2.9}, $P_{1}=\mathbf{I}-P_{0}$ and $G=\overline{G}+\sqrt{\mu}f$,
we can rewrite equation \eqref{2.10} as
\begin{align}
\label{2.11}
\partial_{\tau}f+v_{1}\partial_{y}f
-\epsilon^{a-1}\mathcal{L}f&=\epsilon^{a-1}\Gamma(f,\frac{M-\mu}{\sqrt{\mu}})+
\epsilon^{a-1}\Gamma(\frac{M-\mu}{\sqrt{\mu}},f)
+\epsilon^{a-1}\Gamma(\frac{G}{\sqrt{\mu}},\frac{G}{\sqrt{\mu}})
\nonumber\\
&\quad+\frac{P_{0}(v_{1}\sqrt{\mu}\partial_{y}f)}{\sqrt{\mu}}
-\frac{1}{\sqrt{\mu}}P_{1}v_{1}M\big\{\frac{|v-u|^{2}
\widetilde{\theta}_{y}}{2R\theta^{2}}+\frac{(v-u)\cdot\widetilde{u}_{y}}{R\theta}\big\}
\nonumber\\
&\quad-\frac{P_{1}(v_{1}\partial_{y}\overline{G})}{\sqrt{\mu}}
-\frac{\partial_{\tau}\overline{G}}{\sqrt{\mu}}.
\end{align}
Here we have used the fact that
\begin{equation*}
P_{1}(v_{1}\partial_{y}M)=P_{1}v_{1}M\big\{\frac{|v-u|^{2}
\widetilde{\theta}_{y}}{2R\theta^{2}}+\frac{(v-u)\cdot\widetilde{u}_{y}}{R\theta}\big\}
+\epsilon^{a-1}L_{M}\overline{G}.
\end{equation*}
Finally, we obtain by \eqref{1.1} and the scaling \eqref{2.1} that
\begin{equation}
\label{2.13}
\partial_{\tau}F+v_{1}\partial_{y}F=\epsilon^{a-1}Q(F,F).
\end{equation}

\subsection{Notations and norms}
The following notations are needed in the energy analysis for convenience of presentation.
We shall use $\langle \cdot , \cdot \rangle$  to denote the standard $L^{2}$ inner product in $\mathbb{R}_{v}^{3}$
with its corresponding $L^{2}$ norm $|\cdot|_2$. We also use $( \cdot , \cdot )$ to denote $L^{2}$ inner product in
$\mathbb{R}_{y}$ or $\mathbb{R}_{y}\times \mathbb{R}_{v}^{3}$  with its corresponding $L^{2}$ norm $\|\cdot\|$.
Let $\alpha$ and $\beta$ be multi indices $\alpha=[\alpha_{1},\alpha_{2}]$ and $\beta=[\beta_{1},\beta_{2},\beta_{3}]$,
respectively. Denote a high order derivative
\begin{align*}
\partial_{\beta}^{\alpha}=\partial_{\tau}^{\alpha_{1}}\partial_{y}^{\alpha_{2}}
\partial_{v_{1}}^{\beta_{1}}\partial_{v_{2}}^{\beta_{2}}\partial_{v_{3}}^{\beta_{3}}.
\end{align*}
If each component of $\beta$ is not greater than the corresponding one  of
$\overline{\beta}$, we use the standard notation
$\beta\leq\overline{\beta}$. And $\beta<\overline{\beta}$ means that
$\beta\leq\overline{\beta}$ and $|\beta|<|\overline{\beta}|$.
$C^{\bar\beta}_{\beta}$ is the usual  binomial coefficient.
Throughout the paper, generic positive constants are denoted by
$C$ (generally large) and  $c$ (generally small)  which are
independent of time $\tau$ and $\epsilon$ unless otherwise stated.
The notation  $A\approx B$ is used to denote that there exists $c_{0}>1$
such that $c_{0}^{-1}B\leq A\leq c_{0}B$.
Motivated by \cite{G1}, we introduce the following velocity weight function
\begin{equation}
\label{2.14}
w=w(v)\equiv\langle v\rangle^{\gamma+2},\quad
\langle v\rangle=\sqrt{1+|v|^{2}}.
\end{equation}
Denote weighted $L^{2}$ norms as
\begin{equation*}
|g|_{2,\ell}^{2}\equiv\int_{\mathbb{R}^{3}}w^{2\ell}|g|^{2}\,dv, \quad  \|g\|_{2,\ell}^{2}\equiv\int_{\mathbb{R}}|g|_{2,\ell}^{2}\,dy.
\end{equation*}
The Landau collision frequency is
\begin{equation}
\label{2.15}
\sigma^{ij}(v)=\phi^{ij}\ast\mu=\int_{\mathbb{R}^{3}}\phi^{ij}(v-v_{*})\mu(v_{*})\,dv_{*},\quad 1\leq i,j\leq 3,
\end{equation}
where $\phi^{ij}$ is given in \eqref{1.3}.
We remark that $[\sigma^{ij}(v)]_{1\leq i,j\leq 3}$ is a positive-definite self-adjoint matrix. With \eqref{2.15}, we define the weighted dissipation norms as
\begin{equation*}
|g|_{\sigma,\ell}^{2}\equiv\sum^{3}_{i,j=1}\int_{\mathbb{R}^{3}}w^{2\ell}\{\sigma^{ij}\partial_{i}g\partial_{j}g+
\sigma^{ij}\frac{v_{i}}{2}\frac{v_{j}}{2}|g|^{2}\}\,dv, \quad  \|g\|_{\sigma,\ell}^{2}\equiv\int_{\mathbb{R}}|g|_{\sigma,\ell}^{2}\,dy.
\end{equation*}
And let $|g|_{\sigma}=|g|_{\sigma,0}$ and $\|g\|_{\sigma}=\|g\|_{\sigma,0}$.
From \cite[Lemma 5, p.315]{SG}, one has
\begin{equation}
\label{2.17}
|g|_{\sigma}\approx\Big|\langle v\rangle^{\frac{\gamma+2}{2}}g\Big|_{2}+\Big|\langle v\rangle^{\frac{\gamma}{2}}\nabla_{v}g\cdot\frac{v}{|v|}\Big|_{2}
+\Big|\langle v\rangle^{\frac{\gamma+2}{2}}\nabla_{v}g\times\frac{v}{|v|}\Big|_{2}.
\end{equation}
\par
Now, we define the instant energy functional $\mathcal{E}_{2}(\tau)$ by
\begin{align}
\label{2.18}
\mathcal{E}_{2}(\tau)&=\sum_{|\alpha|\leq1}\|\partial^{\alpha}(\widetilde{\rho},\widetilde{u},\widetilde{\theta})(\tau)\|^{2}
+\epsilon^{2-2a}\sum_{|\alpha|=2}\{\|\partial^{\alpha}(\widetilde{\rho},\widetilde{u},\widetilde{\theta})(\tau)\|^{2}+\|\partial^{\alpha}f(\tau)\|^{2}\}
\nonumber\\
&\quad+\sum_{|\alpha|\leq 1}\|\partial^{\alpha}f(\tau)\|^{2}
+\sum_{|\alpha|+|\beta|\leq 2,|\beta|\geq1}\|\partial^{\alpha}_{\beta}f(\tau)\|_{2,|\beta|}^{2}.
\end{align}
As usual, the instant energy functional $\mathcal{E}_{2}(\tau)$ is assumed to
be small enough a priori. And this will be closed by the energy
estimates in the end.
The corresponding dissipation rate $\mathcal{D}_{2}(\tau)$ is given by
\begin{align}
\label{2.19}
\mathcal{D}_{2}(\tau)&=\epsilon^{1-a}\sum_{1\leq|\alpha|\leq 2}\|\partial^{\alpha}(\widetilde{\rho},\widetilde{u},\widetilde{\theta})(\tau)\|^{2}
+\epsilon^{1-a}\sum_{|\alpha|=2}\|\partial^{\alpha}f(\tau)\|_{\sigma}^{2}
\nonumber\\
&\quad+\epsilon^{a-1}\sum_{|\alpha|\leq 1}\|\partial^{\alpha}f(\tau)\|_{\sigma}^{2}+\epsilon^{a-1}\sum_{|\alpha|+|\beta|\leq 2,|\beta|\geq1}
\|\partial^{\alpha}_{\beta}f(\tau)\|_{\sigma,|\beta|}^{2}.
\end{align}

\section{A priori estimates}\label{sec.3}
This section is devoted to deducing the a priori estimates for the equation \eqref{2.13} around the smooth rarefaction wave. To this end, we first
choose the initial value of the  equation \eqref{2.13} as
\begin{equation}
\label{3.1}
F(0,y,v)\equiv F_{0}(y,v)=M_{[\bar{\rho},\bar{u},\bar{\theta}]}(0,y,v),
\end{equation}
such that $(\widetilde{\rho},\widetilde{u},\widetilde{\theta})(0,y)$ and $f(0,y,v)$ satisfying
\begin{equation}
\label{3.2}
\mathcal{E}_{2}(0)\leq \epsilon.
\end{equation}
Here $\mathcal{E}_{2}(\tau)$ is defined by \eqref{2.18} and $(\bar{\rho},\bar{u},\bar{\theta})$ with $\bar{u}=(\bar{u}_{1},0,0)$ is the smooth approximation rarefaction wave
given by \eqref{1.23}. Due to the smoothness of $(\bar{\rho},\bar{u},\bar{\theta})$, the
local existence of the unique solution to the Cauchy problem \eqref{2.13} and \eqref{3.1} can be obtained by a straightforward modification
of the arguments in \cite{G1} .  To obtain the global-in-time existence of solution,
it suffices to get uniform a priori estimates on solutions.

Throughout this section, we need to find out small positive constants $k$, $\delta_{0}$ and $\epsilon_{0}$ with $0<\epsilon_{0}\ll k\ll1$ and  $0<\delta_{0}\ll 1$, independent of $\epsilon$, $\delta$ and $\tau$, then we choose
\begin{equation}
\label{3.3}
\delta=\frac{1}{k}\epsilon^{\frac{3}{5}-\frac{2}{5}a}
\end{equation}
for a parameter $a\in[\frac{2}{3},1]$, where $\delta$ is given in \eqref{1.21}, and we further let $\epsilon$ be arbitrarily chosen such that $0<\epsilon<\epsilon_{0}$ and $0<\delta<\delta_{0}$.
We now make the a priori assumption:
\begin{equation}
\label{3.4}
\sup_{0\leq\tau\leq\tau_{1}}\mathcal{E}_{2}(\tau)\leq k^{\frac{1}{6}}\epsilon^{\frac{6}{5}-\frac{4}{5}a}
\end{equation}
for $\tau_{1}\in(0,+\infty)$, where $\mathcal{E}_{2}(\tau)$ is defined by \eqref{2.18}.
In what follows we formally explain why one has to choose such $\delta$ as \eqref{3.3} and the a priori assumption as \eqref{3.4}. Indeed, if one assumes that
$$
\sup_{0\leq\tau\leq\tau_{1}}\mathcal{E}_{2}(\tau)\leq O(1)\epsilon^{q}
$$
with a constant $q>0$, then it follows from this assumption and Lemma \ref{lem5.2} that
\begin{align*}
&\|(\rho,u,\theta)(t,x)-(\rho^{R},u^{R},\theta^{R})(\frac{x}{t})\|_{L^{\infty}}\\
&\leq\|(\widetilde{\rho},\widetilde{u},\widetilde{\theta})(\tau,y)\|_{L^{\infty}}
+\|(\bar{\rho},\bar{u},\bar{\theta})(t,x)-(\rho^{R},u^{R},\theta^{R})(\frac{x}{t})\|_{L^{\infty}}
\\
&\leq C\epsilon^{\frac{q}{2}}+Ct^{-1}\delta(\ln(1+t)+|\ln\delta|),
\end{align*}
for any $t>0$. Hence the above estimate in vanishing Knudsen number $\epsilon>0$ is optimal by taking $\delta=O(1)\epsilon^{\frac{q}{2}}$.
On the other hand, we have to deal with the slow time decay of the term in \eqref{3.16} in the way that
\begin{align*}
\epsilon^{1-a}\int^{\infty}_{0}\|\widetilde{\theta}\|^{\frac{2}{3}}\|\bar{\theta}_{yy}\|^{\frac{4}{3}}_{L^{1}}\,d\tau
&\leq C\epsilon^{1-a}\int^{\infty}_{0}\epsilon^{\frac{q}{3}}\epsilon^{\frac{4}{3}a}(\delta+\epsilon^{a}\tau)^{-\frac{4}{3}}\,d\tau
\\
&\leq C\epsilon^{1-a+\frac{1}{3}q+\frac{1}{3}a}\delta^{-\frac{1}{3}}=O(1)\epsilon^{1-\frac{2}{3}a+\frac{1}{6}q},
\end{align*}
where we have replaced $\delta=O(1)\epsilon^{\frac{q}{2}}$ in the last identity.
To close the a priori assumption, we need to require that
\begin{align*}
\epsilon^{1-\frac{2}{3}a+\frac{1}{6}q}\leq \epsilon^{q}, \quad \mbox{that~~~is}\quad q\leq\frac{6}{5}-\frac{4}{5}a.
\end{align*}
Notice that the convergence rate is the fastest  by choosing $q=\frac{6}{5}-\frac{4}{5}a$.
Hence, we can obtain the sharp convergence rate under the condition of \eqref{3.3} and \eqref{3.4}.
\begin{remark}
By the a priori assumption \eqref{3.4} and in view of \eqref{1.26a}, we have
\begin{equation}
\label{1.26}
\left\{\begin{aligned}
 &\sup_{t\geq 0,x\in \R}\{|\rho(t,x)-1|+|u(t,x)|+|\theta(t,x)-\frac{3}{2}|\}<2\eta_{0},\\
& \frac{1}{2}\sup_{t\geq0,x\in\mathbb{R}}\theta(t,x)<\frac{3}{2}<\inf_{t\geq0,x\in\mathbb{R}}\theta(t,x).
\end{aligned}\right.
\end{equation}
due to the smallness of $\epsilon$ and $k$, where $\eta_{0}$ is the small constant given in \eqref{1.26a}.
We point out that \eqref{1.26} will be frequently used in the later energy estimates.
\end{remark}
\par
From now on, we will focus on the reformulated system \eqref{2.4}, \eqref{2.6}, \eqref{2.11} and \eqref{2.13}
with initial data \eqref{3.1}.
We will first derive the lower order energy estimates for the macroscopic
component $(\widetilde{\rho},\widetilde{u},\widetilde{\theta})$  and the microscopic component $f$ in Subsection \ref{sec3.1}.
Then, Subsection \ref{sec3.2} is devoted to obtaining the high order energy estimates
of $(\widetilde{\rho},\widetilde{u},\widetilde{\theta})$  and $f$.
The weighted energy estimates of $f$ will be given in Subsection \ref{sec3.3}.

\subsection{Lower order energy estimates}\label{sec3.1}
Now we will derive the lower order estimates for $(\widetilde{\rho},\widetilde{u},\widetilde{\theta})$  by the entropy and entropy flux.
As in \cite{LTP2,LTP3}, the following macroscopic entropy $S$ will be estimated for the lower order energy estimates. Set
$$
-\frac{3}{2}\rho S=\int_{\mathbb{R}^{3}}M\ln Mdv.
$$
Multiplying \eqref{2.13} by $\ln M$, integrating over $v$ and making a direct calculation, it holds that
\begin{equation*}
(-\frac{3}{2}\rho S)_{\tau}+(-\frac{3}{2}\rho u_{1}S)_{y}
+(\int_{\mathbb{R}^{3}} v_{1}G\ln Mdv)_{y}-\int_{\mathbb{R}^{3}} v_{1}G(\ln M)_{y}dv=0,
\end{equation*}
where
\begin{equation}
\label{3.6}
S=-\frac{2}{3}\ln\rho+\ln(\frac{4\pi}{3}\theta)+1,\quad
p=\frac{2}{3}\rho\theta=\frac{1}{2\pi e}\rho^{\frac{5}{3}}\exp (S).
\end{equation}
In terms of the scaling transformation \eqref{2.1}, we can rewrite the conservation laws \eqref{1.16} as $X_{\tau}+Y_{y}=$
\begin{equation*}
\begin{pmatrix}
0
\\
\frac{4}{3}\epsilon^{1-a}(\mu(\theta)u_{1y})_{y}-(\int v^{2}_{1}L^{-1}_{M}\Theta dv)_{y}
\\
\epsilon^{1-a}(\mu(\theta)u_{2y})_{y}-(\int v_{1}v_{2}L^{-1}_{M}\Theta dv)_{y}
\\
\epsilon^{1-a}(\mu(\theta)u_{3y})_{y}-(\int v_{1}v_{3}L^{-1}_{M}\Theta dv)_{y}
\\
\epsilon^{1-a}\big\{(\kappa(\theta)\theta_{y})_{y}+\frac{4}{3}(\mu(\theta)u_{1}u_{1y})_{y}
+\sum\limits_{i=2}^{3}(\mu(\theta)u_{i}u_{iy})_{y}\big\}-\frac{1}{2}(\int v_{1}|v|^{2}L^{-1}_{M}\Theta dv)_{y}
\end{pmatrix}.
\end{equation*}
Here
\begin{align*}
X=&(X_{0},X_{1},X_{2},X_{3},X_{4})^{t}=(\rho,\rho u_{1},\rho u_{2},\rho u_{3},\rho(\theta+\frac{|u|^{2}}{2}))^{t},
\\
Y=&(Y_{0},Y_{1},Y_{2},Y_{3},Y_{4})^{t}=(\rho u_{1},\rho u^{2}_{1}+p,\rho u_{1}u_{2},\rho u_{1} u_{3},\rho u_{1}(\theta+\frac{|u|^{2}}{2})+pu_{1})^{t},
\end{align*}
where $(\cdot,\cdot,\cdot)^{t}$ is the transpose of the vector $(\cdot,\cdot,\cdot)$.
We define an entropy-entropy flux pair $(\eta,q)$ around a Maxwellian $\overline{M}=M_{[\bar{\rho},\bar{u},\bar{S}]}$
$(\bar{u}_{2}=\bar{u}_{3}=0)$ as
\begin{equation}
\begin{cases}
\label{3.8}
\eta(\tau,y)=\bar{\theta}\{-\frac{3}{2}\rho S+\frac{3}{2}\bar{\rho}\bar{S}+\frac{3}{2}\nabla_{X}(\rho S)|_{X=\bar{X}}\cdot(X-\bar{X})\},
\\
q(\tau,y)=\bar{\theta}\{-\frac{3}{2}\rho u_{1}S+\frac{3}{2}\bar{\rho}\bar{u}_{1}\bar{S}+\frac{3}{2}\nabla_{X}(\rho S)|_{X=\bar{X}}\cdot(Y-\bar{Y})\}.
\end{cases}
\end{equation}
Hence, by using \eqref{3.8} and \eqref{3.6}, we can obtain
\begin{equation*}
(\rho S)_{X_{0}}=S+\frac{|u|^{2}}{2\theta}-\frac{5}{3}, \quad
(\rho S)_{X_{i}}=-\frac{u_{i}}{\theta}, i=1,2,3, \quad (\rho S)_{X_{4}}=\frac{1}{\theta},
\end{equation*}
and
\begin{equation}
\label{3.10}
\begin{cases}
\eta(\tau,y)=\frac{3}{2}\{\rho\theta-\bar{\theta}\rho S+\rho[(\bar{S}-\frac{5}{3})\bar{\theta}
+\frac{|u-\bar{u}|^{2}}{2}]+\frac{2}{3}\bar{\rho}\bar{\theta}\}
\\
\ \ \ \ \ \ \ \ \ =\rho\bar{\theta}\Phi(\frac{\bar{\rho}}{\rho})+\frac{3}{2}\rho\bar{\theta}\Phi(\frac{\theta}{\bar{\theta}})
+\frac{3}{4}\rho|u-\bar{u}|^{2},
\\
q(\tau,y)=u_{1}\eta(\tau,y)+(u_{1}-\bar{u}_{1})(\rho\theta-\bar{\rho}\bar{\theta}),
\end{cases}
\end{equation}
where the convex function $\Phi(s)$ is defined as $\Phi(s)=s-\ln s-1$. From \eqref{3.10}, there exists a constant
$c_{1}>1$ such that
\begin{equation}
\label{3.11}
c_{1}^{-1}\|(\widetilde{\rho},\widetilde{u},\widetilde{\theta})\|^{2}\leq \eta(\tau,y)\leq c_{1}\|(\widetilde{\rho},\widetilde{u},\widetilde{\theta})\|^{2}.
\end{equation}
In view of the definition of \eqref{3.8}, we have by a direct computation that
\begin{align*}
&\eta(\tau,y)_{\tau}+q(\tau,y)_{y}-\nabla_{[\bar{\rho},\bar{u},\bar{S}]}\eta(\tau,y)\cdot(\bar{\rho},\bar{u},\bar{S})_{\tau}
-\nabla_{[\bar{\rho},\bar{u},\bar{S}]}q(\tau,y)\cdot(\bar{\rho},\bar{u},\bar{S})_{y}
\nonumber\\
&=\bar{\theta}\{(-\frac{3}{2}\rho S)_{\tau}+(-\frac{3}{2}\rho u_{1}S)_{y}\}+\frac{3}{2}\bar{\theta}
\{\nabla_{X}(\rho S)|_{X=\bar{X}}(X_{\tau}+Y_{y})\}.
\end{align*}
A direct but tedious computation shows that
\begin{align}
\label{3.13}
\eta&(\tau,y)_{\tau}+q(\tau,y)_{y}+\epsilon^{1-a}\frac{2\bar{\theta}}{\theta}\mu(\theta)\widetilde{u}^{2}_{1y}
+\epsilon^{1-a}\frac{3\bar{\theta}}{2\theta}\sum_{i=2}^{3}\mu(\theta)\widetilde{u}^{2}_{iy}
+\epsilon^{1-a}\frac{3\bar{\theta}}{2\theta^{2}}\kappa(\theta)\widetilde{\theta}^{2}_{y}
\nonumber\\
&-\{\nabla_{[\bar{\rho},\bar{u},\bar{S}]}\eta(\tau,y)\cdot(\bar{\rho},\bar{u},\bar{S})_{\tau}
+\nabla_{[\bar{\rho},\bar{u},\bar{S}]}q(\tau,y)\cdot(\bar{\rho},\bar{u},\bar{S})_{y}\}-(\cdots)_{y}=\sum_{i=1}^{4}H_{i},
\end{align}
where we have denoted
\begin{align*}
\sum_{i=1}^{4}H_{i}
&=\epsilon^{1-a}\Big\{\frac{3\kappa(\theta)}{2\theta^{2}}
(\bar{\theta}_{y}+\widetilde{\theta}_{y})\bar{\theta}_{y}\widetilde{\theta}
-\frac{3\bar{\theta}}{2\theta^{2}}\kappa(\theta)\widetilde{\theta}_{y}\bar{\theta}_{y}\Big\}
\nonumber\\
&\quad+\epsilon^{1-a}\Big\{\frac{2\mu(\theta)}{\theta}(\bar{u}_{1y}+\widetilde{u}_{1y})
\bar{u}_{1y}\widetilde{\theta}-\frac{2\bar{\theta}}{\theta}\mu(\theta)\widetilde{u}_{1y}\bar{u}_{1y}\Big\}
\nonumber\\
&\quad+\Big\{\frac{3}{2}(\frac{\widetilde{\theta}}{\theta})_{y}
\int_{\mathbb{R}^{3}} (\frac{1}{2}v_{1}|v|^{2}-\sum^{3}_{i=1}u_{i} v_{1}v_{i})L^{-1}_{M}\Theta dv\Big\}
\nonumber\\
&\quad+\Big\{(\frac{3}{2}\sum^{3}_{i=1}\widetilde{u}_{iy}-\frac{3}{2}\frac{\widetilde{\theta}}{\theta}\sum^{3}_{i=1}u_{iy})
\int_{\mathbb{R}^{3}} v_{1}v_{i}L^{-1}_{M}\Theta dv\Big\}.
\end{align*}
Here the notation $(\cdots)_{y}$ represents the term in the conservative form so that
it vanishes after integration. In the following energy analysis, we shall assume a priori estimates that $\mathcal{E}_{2}(\tau)$ is small enough due to \eqref{3.4}, and we have from this and \eqref{1.26} that $(\rho, u, \theta)$ and $(\bar{\rho}, \bar{u},  \bar{\theta})$ are close enough to the state $(1,0,\frac{3}{2})$.

By the similar arguments as \cite{LH1},
there exists $c_{2}>0$ such that
\begin{align}
\label{3.14}
&-\{\nabla_{[\bar{\rho},\bar{u},\bar{S}]}\eta(\tau,y)\cdot(\bar{\rho},\bar{u},\bar{S})_{\tau}
+\nabla_{[\bar{\rho},\bar{u},\bar{S}]}q(\tau,y)\cdot(\bar{\rho},\bar{u},\bar{S})_{y}\}
\nonumber\\
&=\frac{3}{2}\rho\bar{u}_{1y}(u_{1}-\bar{u}_{1})^{2}+\frac{2}{3}\rho\bar{\theta}\bar{u}_{1y}\Phi(\frac{\bar{\rho}}{\rho})
+\rho\bar{\theta}\bar{u}_{1y}\Phi(\frac{\theta}{\bar{\theta}})+\frac{3}{2}\rho\bar{\theta}_{y}(u_{1}-\bar{u}_{1})(\frac{2}{3}\ln\frac{\bar{\rho}}{\rho}+\ln\frac{\theta}{\bar{\theta}})
\nonumber\\
&\geq c_{2}\bar{u}_{1y}(\widetilde{\rho}^{2}+\widetilde{u}_{1}^{2}+\widetilde{\theta}^{2}).
\end{align}
Since both $\mu(\theta)$ and $\kappa(\theta)$ are smooth functions of $\theta$, there exists a constant $c_{3}>1$ such that $\mu(\theta),\kappa(\theta)\in[c^{-1}_{3},c_{3}]$. Plugging \eqref{3.14} into \eqref{3.13} and integrating the resulting equation with respect to $y$,
we can obtain
\begin{equation}
\label{3.15}
\frac{d}{d\tau}\int_{\mathbb{R}}\eta(\tau,y) dy+c\epsilon^{1-a}\|(\widetilde{u}_{y},\widetilde{\theta}_{y})\|^{2}
+c_{2}\|\sqrt{\bar{u}_{1y}}(\widetilde{\rho},\widetilde{u}_{1},\widetilde{\theta})\|^{2}
\leq \sum_{i=1}^{4}\int_{\mathbb{R}} H_{i}\,dy.
\end{equation}
We will estimate the terms of \eqref{3.15} involving $H_{i}$. By the integration by parts, the
a priori assumption \eqref{3.4}, Lemma \ref{lem5.3} and  the Cauchy-Schwarz inequality, one gets that
\begin{align*}
\int_{\mathbb{R}}H_{1}dy
&=\epsilon^{1-a}\int_{\mathbb{R}}\Big\{\frac{3\kappa(\theta)}{2\theta^{2}}
(\bar{\theta}_{y}+\widetilde{\theta}_{y})\bar{\theta}_{y}\widetilde{\theta}
-\frac{3\bar{\theta}}{2\theta^{2}}\kappa(\theta)\widetilde{\theta}_{y}\bar{\theta}_{y}\Big\}dy
\nonumber\\
&\leq C\epsilon^{1-a}\int_{\mathbb{R}}|
\frac{3\kappa(\theta)}{2\theta^{2}}(\widetilde{\theta}_{y}+\bar{\theta}_{y})
\bar{\theta}_{y}\widetilde{\theta}+(\frac{3\bar{\theta}\kappa(\theta)}{2\theta^{2}})_{y}\widetilde{\theta}\bar{\theta}_{y}+
\frac{3\bar{\theta}\kappa(\theta)}{2\theta^{2}}\widetilde{\theta}\bar{\theta}_{yy}|dy
\nonumber\\
&\leq C\epsilon^{1-a}\int_{\mathbb{R}}|\widetilde{\theta}|\{|\bar{\theta}_{y}||\widetilde{\theta}_{y}|
+|\bar{\theta}_{y}|^{2}+|\bar{\theta}_{y}||\theta_{y}|+|\bar{\theta}_{yy}|\}dy
\nonumber\\
&\leq C\epsilon^{1-a}\|\widetilde{\theta}\|_{L^{\infty}}\{\|\bar{\theta}_{yy}\|_{L^{1}}+\|\bar{\theta}_{y}\|^{2}
+\|\widetilde{\theta}_{y}\|^2\},
\end{align*}
which further implies that
\begin{align}
\label{3.16}
\int_{\mathbb{R}}H_{1}dy
&\leq \epsilon^{1-a}\big\{\eta\|\widetilde{\theta}_{y}\|^{2}
+C_{\eta}\|\widetilde{\theta}\|^{\frac{2}{3}}\|\bar{\theta}_{yy}\|^{\frac{4}{3}}_{L^{1}}
+C_{\eta}\|\widetilde{\theta}\|^{\frac{2}{3}}\|\bar{\theta}_{y}\|^{\frac{8}{3}}
+ C\|\widetilde{\theta}\|^{\frac{1}{2}}\|\widetilde{\theta}_{y}\|^{\frac{1}{2}}\|\widetilde{\theta}_{y}\|^2\big\}
\nonumber\\
&\leq \eta\epsilon^{1-a}\|\widetilde{\theta}_{y}\|^{2}
+C_{\eta}\epsilon^{1-a}\epsilon^{\frac{1}{3}(\frac{6}{5}-\frac{4}{5}a)}\epsilon^{\frac{4}{3}a}
(\delta+\epsilon^{a}\tau)^{-\frac{4}{3}}+C\sqrt{\mathcal{E}_{2}(\tau)}\mathcal{D}_{2}(\tau)
\nonumber\\
&\leq \eta\epsilon^{1-a}\|\widetilde{\theta}_{y}\|^{2}
+C_{\eta}\epsilon^{\frac{7}{5}+\frac{1}{15}a}(\delta+\epsilon^{a}\tau)^{-\frac{4}{3}}+
Ck^{\frac{1}{12}}\epsilon^{\frac{3}{5}-\frac{2}{5}a}\mathcal{D}_{2}(\tau).
\end{align}
Here we have used the following one-dimensional Sobolev imbedding theorem
\begin{align*}
\|g(y)\|_{L^{\infty}}\leq \sqrt{2}\|g(y)\|^{\frac{1}{2}}\|\partial_{y}g(y)\|^{\frac{1}{2}},
\quad \mbox{for}\quad g(y)\in H^{1}(\mathbb{R})\subset L^{\infty}(\mathbb{R}).
\end{align*}
Following the same method used as \eqref{3.16}, it holds that
\begin{align*}
\int_{\mathbb{R}} H_{2}dy&=\epsilon^{1-a}\int_{\mathbb{R}}\Big\{\frac{2\mu(\theta)}{\theta}(\bar{u}_{1y}+\widetilde{u}_{1y})
\bar{u}_{1y}\widetilde{\theta}-\frac{2\bar{\theta}}{\theta}\mu(\theta)\widetilde{u}_{1y}\bar{u}_{1y}\Big\}dy
\nonumber\\
&\leq\eta\epsilon^{1-a}\|(\widetilde{u}_{1y},\widetilde{\theta}_{y})\|^{2}
+C_{\eta}\epsilon^{\frac{7}{5}+\frac{1}{15}a}(\delta+\epsilon^{a}\tau)^{-\frac{4}{3}}+Ck^{\frac{1}{12}}\epsilon^{\frac{3}{5}-\frac{2}{5}a}
\mathcal{D}_{2}(\tau).
\end{align*}
By using the self-adjoint property of $L^{-1}_{M}$, \eqref{5.1} and \eqref{5.2}, one can show that
\begin{align}
\label{3.18}
\int_{\mathbb{R}^{3}}& (\frac{1}{2}v_{1}|v|^{2}-\sum^{3}_{i=1}v_{1}u_{i}v_{i})L^{-1}_{M}\Theta dv=
\int_{\mathbb{R}^{3}} L^{-1}_{M}\{P_{1}(\frac{1}{2}v_{1}|v|^{2}-\sum^{3}_{i=1}v_{1}u_{i}v_{i})M\}\frac{\Theta}{M} dv
\nonumber\\
=&\int_{\mathbb{R}^{3}} L^{-1}_{M}\{(R\theta)^{\frac{3}{2}}\hat{A}_{1}(\frac{v-u}{\sqrt{R\theta}})M\}\frac{\Theta}{M} dv
=(R\theta)^{\frac{3}{2}}\int_{\mathbb{R}^{3}}A_{1}(\frac{v-u}{\sqrt{R\theta}})\frac{\Theta}{M} dv,
\end{align}
and
\begin{align}
\label{3.19}
\int_{\mathbb{R}^{3}}v_{1}v_{i}L^{-1}_{M}\Theta dv=&
\int_{\mathbb{R}^{3}} L^{-1}_{M}\{P_{1}( v_{1}v_{i}M)\}\frac{\Theta}{M} dv
\nonumber\\
=&\int_{\mathbb{R}^{3}} L^{-1}_{M}\{R\theta\hat{B}_{1i}(\frac{v-u}{\sqrt{R\theta}})M\}\frac{\Theta}{M} dv
=R\theta\int_{\mathbb{R}^{3}}B_{1i}(\frac{v-u}{\sqrt{R\theta}})\frac{\Theta}{M} dv.
\end{align}
Both \eqref{3.18} and the expression of $H_{3}$ in \eqref{3.13} imply
\begin{align}
\label{3.20}
\int_{\mathbb{R}} H_{3}dy&=\int_{\mathbb{R}^{3}}\Big\{\frac{3}{2}(\frac{\widetilde{\theta}}{\theta})_{y}
\int_{\mathbb{R}^{3}} (\frac{1}{2}v_{1}|v|^{2}-\sum^{3}_{i=1}u_{i} v_{1}v_{i})L^{-1}_{M}\Theta dv\Big\}dy
\nonumber\\
&=\int_{\mathbb{R}^{3}}\Big\{\frac{3}{2}(\frac{\widetilde{\theta}}{\theta})_{y}
(R\theta)^{\frac{3}{2}}\int_{\mathbb{R}^{3}}A_{1}(\frac{v-u}{\sqrt{R\theta}})\frac{\Theta}{M} dv\Big\}dy.
\end{align}
Notice that for any multi-index $\beta$ and $m\geq 0$, we have by using the fast decay of the Burnett functions \eqref{5.4} and \eqref{1.26}
that
\begin{equation}
\label{3.21}
\int_{\mathbb{R}^{3}}\frac{|\langle v\rangle^{m}\sqrt{\mu}\partial_{\beta}A_{1}(\frac{v-u}{\sqrt{R\theta}})|^{2}}{M^{2}}dv\leq C.
\end{equation}
We now turn to compute the term with $\Theta$ in \eqref{3.20}. Recalling that
\begin{align}
\label{3.22}
\Theta=\epsilon^{1-a}\partial_{\tau}G+\epsilon^{1-a}P_{1}(v_{1}\partial_{y}G)-Q(G,G).
\end{align}
For the first term on the right-hand side of \eqref{3.22}. Recalling that $G=\overline{G}+\sqrt{\mu}f$, applying the similar arguments as \eqref{5.24},
and using $\sqrt{\mathcal{E}_{2}(\tau)}\leq k^{\frac{1}{12}}\epsilon^{\frac{3}{5}-\frac{2}{5}a}$ and $\epsilon^{a}\delta^{-1}\leq k^{\frac{1}{12}}\epsilon^{\frac{3}{5}-\frac{2}{5}a}$ due to \eqref{3.4} and \eqref{3.3}, one has from \eqref{3.21}, the Cauchy-Schwarz inequality and Lemma \ref{lem5.3} that
\begin{align}
\label{3.23}
&\int_{\mathbb{R}^{3}}\Big\{\frac{3}{2}(\frac{\widetilde{\theta}}{\theta})_{y}
(R\theta)^{\frac{3}{2}}\int_{\mathbb{R}^{3}}A_{1}(\frac{v-u}{\sqrt{R\theta}})\frac{\epsilon^{1-a}\partial_{\tau}\overline{G}}{M} dv\Big\}dy
\nonumber\\
&\leq C\epsilon^{1-a}(\|\widetilde{\theta}_{y}\|+\|\widetilde{\theta}\theta_{y}\|)
\times\big(\int_{\mathbb{R}}\int_{\mathbb{R}^{3}}|\frac{\partial_{\tau}\overline{G}}{\sqrt{\mu}}|^{2}dvdy\big)^{\frac{1}{2}}
\nonumber\\
&\leq C\epsilon^{2(1-a)}(\|\widetilde{\theta}_{y}\|+\|\widetilde{\theta}\theta_{y}\|)
\times (\|(\bar{u}_{1y\tau},\bar{\theta}_{y\tau})\|+\|(\bar{u}_{1y},\bar{\theta}_{y})\cdot(u_{\tau},\theta_{\tau})\|)
\nonumber\\
&\leq C\eta\epsilon^{1-a}\|\widetilde{\theta}_{y}\|^{2}
+C_{\eta}\epsilon^{\frac{7}{5}+\frac{1}{15}a}(\delta+\epsilon^{a}\tau)^{-\frac{4}{3}}
+C_{\eta}(\epsilon^{a}\delta^{-1}+\sqrt{\mathcal{E}_{2}(\tau)})\mathcal{D}_{2}(\tau)
\nonumber\\
&\leq C\eta\epsilon^{1-a}\|\widetilde{\theta}_{y}\|^{2}
+C_{\eta}\epsilon^{\frac{7}{5}+\frac{1}{15}a}(\delta+\epsilon^{a}\tau)^{-\frac{4}{3}}
+C_{\eta}k^{\frac{1}{12}}\epsilon^{\frac{3}{5}-\frac{2}{5}a}\mathcal{D}_{2}(\tau).
\end{align}
Similarly, it holds that
\begin{align}
\label{3.24}
&\int_{\mathbb{R}^{3}}\Big\{\frac{3}{2}(\frac{\widetilde{\theta}}{\theta})_{y}
(R\theta)^{\frac{3}{2}}\int_{\mathbb{R}^{3}}A_{1}(\frac{v-u}{\sqrt{R\theta}})\frac{\epsilon^{1-a}\sqrt{\mu}\partial_{\tau}f}{M} dv\Big\}dy
\leq C\epsilon^{(1-a)}\{\|\widetilde{\theta}_{y}\|+\|\widetilde{\theta}\theta_{y}\|\}
\times\|\langle v\rangle^{-\frac{1}{2}}\partial_{\tau}f\|
\nonumber\\
&\leq C\eta\epsilon^{1-a}\|\widetilde{\theta}_{y}\|^{2}+C_{\eta}\epsilon^{1-a}\|\partial_{\tau}f\|^{2}_{\sigma}
+C\epsilon^{\frac{7}{5}+\frac{1}{15}a}(\delta+\epsilon^{a}\tau)^{-\frac{4}{3}}
+Ck^{\frac{1}{12}}\epsilon^{\frac{3}{5}-\frac{2}{5}a}\mathcal{D}_{2}(\tau).
\end{align}
It follows from \eqref{3.23} and \eqref{3.24} that
\begin{align}
\label{3.25}
&\int_{\mathbb{R}^{3}}\Big\{\frac{3}{2}(\frac{\widetilde{\theta}}{\theta})_{y}
(R\theta)^{\frac{3}{2}}\int_{\mathbb{R}^{3}}A_{1}(\frac{v-u}{\sqrt{R\theta}})\frac{\epsilon^{1-a}\partial_{\tau}G}{M} dv\Big\}dy
\nonumber\\
&\leq C\eta\epsilon^{1-a}\|\widetilde{\theta}_{y}\|^{2}+C_{\eta}\epsilon^{1-a}\|\partial_{\tau}f\|^{2}_{\sigma}
+C_{\eta}\epsilon^{\frac{7}{5}+\frac{1}{15}a}(\delta+\epsilon^{a}\tau)^{-\frac{4}{3}}
+C_{\eta}k^{\frac{1}{12}}\epsilon^{\frac{3}{5}-\frac{2}{5}a}\mathcal{D}_{2}(\tau).
\end{align}
For the second term on the right-hand side of \eqref{3.22}. Similar arguments as \eqref{3.25} imply
\begin{align*}
&\int_{\mathbb{R}^{3}}\Big\{\frac{3}{2}(\frac{\widetilde{\theta}}{\theta})_{y}
(R\theta)^{\frac{3}{2}}\int_{\mathbb{R}^{3}}A_{1}(\frac{v-u}{\sqrt{R\theta}})\frac{\epsilon^{1-a}P_{1}(v_{1}\partial_{y}G)}{M} dv\Big\}dy
\nonumber\\
&\leq C\eta\epsilon^{1-a}\|\widetilde{\theta}_{y}\|^{2}+C_{\eta}\epsilon^{1-a}\|\partial_{y}f\|^{2}_{\sigma}
+C_{\eta}\epsilon^{\frac{7}{5}+\frac{1}{15}a}(\delta+\epsilon^{a}\tau)^{-\frac{4}{3}}
+C_{\eta}k^{\frac{1}{12}}\epsilon^{\frac{3}{5}-\frac{2}{5}a}\mathcal{D}_{2}(\tau).
\end{align*}
For the last term of \eqref{3.22}, by using \eqref{2.8}, \eqref{3.21} and the similar arguments as \eqref{5.20}, we get
\begin{align}
\label{3.27}
&\int_{\mathbb{R}^{3}}\Big\{\frac{3}{2}(\frac{\widetilde{\theta}}{\theta})_{y}
(R\theta)^{\frac{3}{2}}\int_{\mathbb{R}^{3}}A_{1}(\frac{v-u}{\sqrt{R\theta}})\frac{Q(G,G)}{M} dv\Big\}dy
\nonumber\\
&=\int_{\mathbb{R}^{3}}\Big\{\frac{3}{2}(\frac{\widetilde{\theta}}{\theta})_{y}
(R\theta)^{\frac{3}{2}}\int_{\mathbb{R}^{3}}\frac{\sqrt{\mu}A_{1}(\frac{v-u}{\sqrt{R\theta}})}{M}\Gamma(\frac{G}{\sqrt{\mu}},\frac{G}{\sqrt{\mu}}) dv\Big\}dy
\nonumber\\
&\leq C\eta\epsilon^{1-a}\|\widetilde{\theta}_{y}\|^{2}
+C_{\eta}\epsilon^{\frac{7}{5}+\frac{1}{15}a}(\delta+\epsilon^{a}\tau)^{-\frac{4}{3}}
+C_{\eta}k^{\frac{1}{12}}\epsilon^{\frac{3}{5}-\frac{2}{5}a}\mathcal{D}_{2}(\tau).
\end{align}
By the estimates from \eqref{3.25} to \eqref{3.27}, we have from \eqref{3.20} that
\begin{align}
\label{3.28}
\int_{\mathbb{R}} H_{3}dy
\leq C\eta\epsilon^{1-a}\|\widetilde{\theta}_{y}\|^{2}+C_{\eta}\epsilon^{1-a}\sum_{|\alpha|=1}\|\partial^{\alpha}f\|^{2}_{\sigma}
+C_{\eta}\epsilon^{\frac{7}{5}+\frac{1}{15}a}(\delta+\epsilon^{a}\tau)^{-\frac{4}{3}}
+C_{\eta}k^{\frac{1}{12}}\epsilon^{\frac{3}{5}-\frac{2}{5}a}\mathcal{D}_{2}(\tau).
\end{align}
Following the same strategies used in the estimates of \eqref{3.18}, then similar arguments as \eqref{3.28} imply
\begin{align*}
\int_{\mathbb{R}} H_{4}dy&=\int_{\mathbb{R}}\Big\{(\frac{3}{2}\sum^{3}_{i=1}\widetilde{u}_{iy}-\frac{3}{2}\frac{\widetilde{\theta}}{\theta}\sum^{3}_{i=1}u_{iy})
\int_{\mathbb{R}^{3}} v_{1}v_{i}L^{-1}_{M}\Theta dv\Big\}dy
\nonumber\\
&=\int_{\mathbb{R}}\Big\{(\frac{3}{2}\sum^{3}_{i=1}\widetilde{u}_{iy}-\frac{3}{2}\frac{\widetilde{\theta}}{\theta}\sum^{3}_{i=1}u_{iy})
R\theta\int_{\mathbb{R}^{3}}B_{1i}(\frac{v-u}{\sqrt{R\theta}})\frac{\Theta}{M} dv\Big\}dy
\nonumber\\
\leq C\eta\epsilon^{1-a}&\|\widetilde{u}_{y}\|^{2}+C_{\eta}\epsilon^{1-a}\sum_{|\alpha|=1}\|\partial^{\alpha}f\|^{2}_{\sigma}
+C_{\eta}\epsilon^{\frac{7}{5}+\frac{1}{15}a}(\delta+\epsilon^{a}\tau)^{-\frac{4}{3}}
+C_{\eta}k^{\frac{1}{12}}\epsilon^{\frac{3}{5}-\frac{2}{5}a}\mathcal{D}_{2}(\tau).
\end{align*}
Here we have used the fact that for any multi-index $\beta$ and $m\geq 0$,
\begin{equation*}
\int_{\mathbb{R}^{3}}\frac{|\langle v\rangle^{m}\sqrt{\mu}\partial_{\beta}B_{1i}(\frac{v-u}{\sqrt{R\theta}})|^{2}}{M^{2}}dv\leq C.
\end{equation*}
Therefore, substituting the estimates of $H_{1}-H_{4}$ into \eqref{3.15} and taking $\eta>0$ small enough gives
\begin{align}
\label{3.31}
\frac{d}{d\tau}&\int_{\mathbb{R}}\eta(\tau,y) dy+c\epsilon^{1-a}\|(\widetilde{u}_{y},\widetilde{\theta}_{y})\|^{2}
+c_{2}\|\sqrt{\bar{u}_{1y}}(\widetilde{\rho},\widetilde{u}_{1},\widetilde{\theta})\|^{2}
\nonumber\\
&\leq C\epsilon^{1-a}\sum_{|\alpha|=1}\|\partial^{\alpha}f\|^{2}_{\sigma}
+C\epsilon^{\frac{7}{5}+\frac{1}{15}a}(\delta+\epsilon^{a}\tau)^{-\frac{4}{3}}
+Ck^{\frac{1}{12}}\epsilon^{\frac{3}{5}-\frac{2}{5}a}\mathcal{D}_{2}(\tau).
\end{align}
\par
Since there is no dissipation for density function and the temporal derivatives for $(\widetilde{\rho},\widetilde{u},\widetilde{\theta})$ in \eqref{3.31}, to get the estimation of $\|\widetilde{\rho}_{y}\|^{2}$ and $\|(\widetilde{\rho}_{\tau},\widetilde{u}_{\tau},\widetilde{\theta}_{\tau})\|^{2}$,
we first take the inner product of  \eqref{2.4}$_2$  with $\widetilde{\rho}_{y}$ over $\mathbb{R}$ to get
\begin{align}
\label{3.32}
\epsilon^{1-a}(\frac{2\bar{\theta}}{3\bar{\rho}}\widetilde{\rho}_{y},\widetilde{\rho}_{y})=\epsilon^{1-a}(-\widetilde{u}_{1\tau}-\bar{u}_{1}\widetilde{u}_{1y}
-\frac{2}{3}\widetilde{\theta}_{y}-J_{2}-\frac{1}{\rho}\int_{\mathbb{R}^3} v^{2}_{1}G_{y}dv,\widetilde{\rho}_{y}).
\end{align}
By using \eqref{2.4}$_1$, the integration by parts, the Cauchy inequality and Lemma \ref{lem5.3}, one has
\begin{align*}
-\epsilon^{1-a}(\widetilde{u}_{1\tau},\widetilde{\rho}_{y})=&
-\epsilon^{1-a}(\widetilde{u}_{1},\widetilde{\rho}_{y})_{\tau}
-\epsilon^{1-a}(\widetilde{u}_{1y},\widetilde{\rho}_{\tau})
\nonumber\\
=&-\epsilon^{1-a}(\widetilde{u}_{1},\widetilde{\rho}_{y})_{\tau}
-\epsilon^{1-a}(\widetilde{u}_{1y},\{-\bar{\rho} \widetilde{u}_{1y}-\bar{\rho}_{y}\widetilde{u}_{1}-
(\widetilde{\rho}\widetilde{u}_{1})_{y}-\bar{u}_{1}\widetilde{\rho}_{y}-\bar{u}_{1y}\widetilde{\rho}\})
\nonumber\\
\leq& -\epsilon^{1-a}(\widetilde{u}_{1},\widetilde{\rho}_{y})_{\tau}+
C\eta\epsilon^{1-a}\|\widetilde{\rho}_{y}\|^{2}+C_{\eta}\epsilon^{1-a}\|\widetilde{u}_{1y}\|^{2}
\nonumber\\
&+C\epsilon^{\frac{7}{5}+\frac{1}{15}a}(\delta+\epsilon^{a}\tau)^{-\frac{4}{3}}
+C\sqrt{\mathcal{E}_{2}(\tau)}\mathcal{D}_{2}(\tau),
\end{align*}
where in the last inequality, we have dealt with the typical terms as follows
\begin{align*}
\epsilon^{1-a}|(\widetilde{u}_{1y},\bar{u}_{1y}\widetilde{\rho})|&\leq C\epsilon^{1-a}\|\widetilde{u}_{1y}\|^{2}
+C\epsilon^{1-a}\|\bar{u}_{1y}\|^{2}_{L^{\infty}}\|\widetilde{\rho}\|^{2}
\\
&\leq C\epsilon^{1-a}\|\widetilde{u}_{1y}\|^{2}
+C\epsilon^{1-a}\epsilon^{2a}(\delta+\epsilon^{a}\tau)^{-2}\epsilon^{\frac{6}{5}-\frac{4}{5}a}
\\
&\leq C\epsilon^{1-a}\|\widetilde{u}_{1y}\|^{2}
+C\epsilon^{\frac{7}{5}+\frac{1}{15}a}(\delta+\epsilon^{a}\tau)^{-\frac{4}{3}}.
\end{align*}
The Cauchy inequality implies
\begin{align*}
\epsilon^{1-a}|(\bar{u}_{1}\widetilde{u}_{1y}+\frac{2}{3}\widetilde{\theta}_{y},\widetilde{\rho}_{y})|
\leq\eta\epsilon^{1-a}\|\widetilde{\rho}_{y}\|^{2}
+C_{\eta}\epsilon^{1-a}\|(\widetilde{u}_{1y},\widetilde{\theta}_{y})\|^{2}.
\end{align*}
Notice that the term involving $J_{2}$ can be controlled by
\begin{align*}
\epsilon^{1-a}|(J_{2},\widetilde{\rho}_{y})|
&=\epsilon^{1-a}|(\{\widetilde{u}_{1}\bar{u}_{1y}+\widetilde{u}_{1}
\widetilde{u}_{1y}+\frac{2}{3}\rho_{y}\frac{\bar{\rho}\widetilde{\theta}-\widetilde{\rho}\bar{\theta}}
{\rho\bar{\rho}}\},\widetilde{\rho}_{y})|
\\
&\leq \eta\epsilon^{1-a}\|\widetilde{\rho}_{y}\|^{2}
+C_{\eta}\epsilon^{\frac{7}{5}+\frac{1}{15}a}(\delta+\epsilon^{a}\tau)^{-\frac{4}{3}}
+C_{\eta}\sqrt{\mathcal{E}_{2}(\tau)}\mathcal{D}_{2}(\tau),
\end{align*}
according to the Cauchy inequality, Lemma \ref{lem5.3}, \eqref{3.4} and \eqref{3.3}.
Recall $G=\overline{G}+\sqrt{\mu}f$, we have from the H\"older inequality, Sobolev imbedding theorem, \eqref{5.26} and \eqref{2.17} that
\begin{align*}
&\epsilon^{1-a}\Big|\int_{\mathbb{R}}\int_{\mathbb{R}^3} v^{2}_{1}\frac{G_{y}}{\rho}\widetilde{\rho}_{y}dvdy\Big|
\leq \epsilon^{1-a}\|\widetilde{\rho}_{y}\|(\|\langle v\rangle^{-\frac{1}{2}}f_{y}\|+\|\frac{\overline{G}_{y}}{\sqrt{\mu}}\|)
\\
&\leq\eta\epsilon^{1-a}\|\widetilde{\rho}_{y}\|^{2}
+C_{\eta}\epsilon^{1-a}\|f_{y}\|_{\sigma}^{2}+C_{\eta}\epsilon^{\frac{7}{5}+\frac{1}{15}a}(\delta+\epsilon^{a}\tau)^{-\frac{4}{3}}
+C_{\eta}\sqrt{\mathcal{E}_{2}(\tau)}\mathcal{D}_{2}(\tau).
\end{align*}
Hence, plugging the above related estimates into \eqref{3.32} and using $\sqrt{\mathcal{E}_{2}(\tau)}\leq k^{\frac{1}{12}}\epsilon^{\frac{3}{5}-\frac{2}{5}a}$,
we have by choosing $\eta>0$ small enough that
\begin{align}
\label{3.33}
\epsilon^{1-a}\|\widetilde{\rho}_{y}\|^{2}\leq&
 -C\epsilon^{1-a}(\widetilde{u}_{1},\widetilde{\rho}_{y})_{\tau}+
C\epsilon^{1-a}\|(\widetilde{u}_{1y},\widetilde{\theta}_{y})\|^{2}
+C\epsilon^{1-a}\|f_{y}\|_{\sigma}^{2}
\nonumber\\
&+C\epsilon^{\frac{7}{5}+\frac{1}{15}a}(\delta+\epsilon^{a}\tau)^{-\frac{4}{3}}
+Ck^{\frac{1}{12}}\epsilon^{\frac{3}{5}-\frac{2}{5}a}\mathcal{D}_{2}(\tau).
\end{align}
On the other hand, by using the system \eqref{2.4} again, we can arrive at
\begin{align}
&\epsilon^{1-a}\|(\widetilde{\rho}_{\tau},\widetilde{u}_{\tau},\widetilde{\theta}_{\tau})\|^{2}\notag\\
&\leq C\epsilon^{1-a}\{\|(\widetilde{\rho}_{y},\widetilde{u}_{y},\widetilde{\theta}_{y})\|^{2}+\|f_{y}\|_{\sigma}^{2}\}
+C\epsilon^{\frac{7}{5}+\frac{1}{15}a}(\delta+\epsilon^{a}\tau)^{-\frac{4}{3}}
+Ck^{\frac{1}{12}}\epsilon^{\frac{3}{5}-\frac{2}{5}a}\mathcal{D}_{2}(\tau).\label{3.34}
\end{align}
For some suitably large constant $\widetilde{C}_{0}>0$, a suitable linear combination of \eqref{3.34}, \eqref{3.33} and \eqref{3.31} yields
\begin{align}
\label{3.35}
&\frac{d}{d\tau}\Big(\widetilde{C}_{0}\int_{\mathbb{R}} \eta(\tau,y) dy+C\int_{\mathbb{R}}\epsilon^{1-a}\widetilde{u}_{1}\widetilde{\rho}_{y} dy\Big)
+c\|\sqrt{\bar{u}_{1y}}(\widetilde{\rho},\widetilde{u}_{1},\widetilde{\theta})\|^{2}
+c\epsilon^{1-a}\sum_{|\alpha|=1}\|\partial^{\alpha}(\widetilde{\rho},\widetilde{u},\widetilde{\theta})\|^{2}
\nonumber\\
&\hspace{1cm}\leq C\epsilon^{1-a}\sum_{|\alpha|=1}\|\partial^{\alpha}f\|^{2}_{\sigma}
+C\epsilon^{\frac{7}{5}+\frac{1}{15}a}(\delta+\epsilon^{a}\tau)^{-\frac{4}{3}}
+Ck^{\frac{1}{12}}\epsilon^{\frac{3}{5}-\frac{2}{5}a}\mathcal{D}_{2}(\tau).
\end{align}
This completes the proof of lower order energy estimates for the macroscopic component $(\widetilde{\rho},\widetilde{u},\widetilde{\theta})$.
\par
Next, we turn to prove lower order energy estimates for the microscopic component $f$.
Taking the inner product of \eqref{2.11} with $f$  over $\mathbb{R}\times\mathbb{R}^{3}$ gives
\begin{align}
\label{3.36}
(&\partial_{\tau}f+v_{1}\partial_{y}f
-\epsilon^{a-1}\mathcal{L}f,f)=\epsilon^{a-1}(\Gamma(f,\frac{M-\mu}{\sqrt{\mu}})+
\Gamma(\frac{M-\mu}{\sqrt{\mu}},f)
+\Gamma(\frac{G}{\sqrt{\mu}},\frac{G}{\sqrt{\mu}}),f)
\nonumber\\
&+(\frac{P_{0}(v_{1}\sqrt{\mu}\partial_{y}f)}{\sqrt{\mu}}-\frac{1}{\sqrt{\mu}}P_{1}v_{1}M\big\{\frac{|v-u|^{2}
\widetilde{\theta}_{y}}{2R\theta^{2}}+\frac{(v-u)\cdot\widetilde{u}_{y}}{R\theta}\big\}
-\frac{P_{1}(v_{1}\partial_{y}\overline{G})}{\sqrt{\mu}}
-\frac{\partial_{\tau}\overline{G}}{\sqrt{\mu}},f).
\end{align}
We will estimate each term for \eqref{3.36}. First of all, we have from the integration by parts and \eqref{5.5} that
$$
(\partial_{\tau}f+v_{1}\partial_{y}f
-\epsilon^{a-1}\mathcal{L}f,f)\geq\frac{1}{2}\frac{d}{d\tau}\|f\|^{2}+\sigma_{1}\epsilon^{a-1}\|f\|_{\sigma}^{2}.
$$
By using \eqref{5.7} and \eqref{5.12}, we can obtain
$$
\epsilon^{a-1}|(\Gamma(f,\frac{M-\mu}{\sqrt{\mu}}),f)+
(\Gamma(\frac{M-\mu}{\sqrt{\mu}},f),f)|
\leq  C\eta_{0}\epsilon^{a-1}\|f\|^{2}_{\sigma}.
$$
From \eqref{5.20}, one can see easily that
$$
\epsilon^{a-1}|(\Gamma(\frac{G}{\sqrt{\mu}},\frac{G}{\sqrt{\mu}}),f)|
\leq C\eta\epsilon^{a-1}\|f\|^{2}_{\sigma}
+C_{\eta}\epsilon^{\frac{7}{5}+\frac{1}{15}a}(\delta+\epsilon^{a}\tau)^{-\frac{4}{3}}
+C_{\eta}k^{\frac{1}{12}}\epsilon^{\frac{3}{5}-\frac{2}{5}a}\mathcal{D}_{2}(\tau).
$$
In view of the properties of $P_{0}$ in \eqref{1.9} as well as \eqref{2.17}, one can show that
\begin{align*}
|(\frac{P_{0}(v_{1}\sqrt{\mu}\partial_{y}f)}{\sqrt{\mu}},f)|
&\leq C\|\langle v\rangle^{-\frac{1}{2}}f\|\|\langle v\rangle^{\frac{1}{2}}\frac{P_{0}(v_{1}\sqrt{\mu}\partial_{y}f)}{\sqrt{\mu}}\|
\\
&\leq C\|\langle v\rangle^{-\frac{1}{2}}f\|\|\langle v\rangle^{-\frac{1}{2}}\partial_{y}f\|\leq C\eta\epsilon^{a-1}\|f\|_{\sigma}^{2}+C_{\eta}\epsilon^{1-a}\|f_{y}\|_{\sigma}^{2}.
\end{align*}
By using \eqref{5.1}, a direct computation shows that
\begin{align*}
P_{1}v_{1}M\big\{\frac{|v-u|^{2}
\widetilde{\theta}_{y}}{2R\theta^{2}}+\frac{(v-u)\cdot\widetilde{u}_{y}}{R\theta}\big\}
=\frac{\sqrt{R}\widetilde{\theta}_{y}}{\sqrt{\theta}}\hat{A}_{1}(\frac{v-u}{\sqrt{R\theta}})M
+\sum_{j=1}^{3}\widetilde{u}_{jy}\hat{B}_{1j}(\frac{v-u}{\sqrt{R\theta}})M,
\end{align*}
which implies that
$$
|(\frac{1}{\sqrt{\mu}}P_{1}v_{1}M\big\{\frac{|v-u|^{2}
\widetilde{\theta}_{y}}{2R\theta^{2}}+\frac{(v-u)\cdot\widetilde{u}_{y}}{R\theta}\big\},f)|
\leq C\eta\epsilon^{a-1}\|f\|_{\sigma}^{2}+C_{\eta}\epsilon^{1-a}\|(\widetilde{u}_{y},\widetilde{\theta}_{y})\|^{2}.
$$
In addition, we use \eqref{1.9}, \eqref{5.26}, \eqref{3.3}, \eqref{3.4}, \eqref{2.17}, the Sobolev imbedding theorem and Lemma \ref{lem5.3} to obtain
\begin{align*}
&|\big(\frac{P_{1}(v_{1}\partial_{y}\overline{G})}{\sqrt{\mu}}
+\frac{\partial_{\tau}\overline{G}}{\sqrt{\mu}},f\big)|
=|\big(\frac{v_{1}\partial_{y}\overline{G}}{\sqrt{\mu}}-
\frac{P_{0}(v_{1}\partial_{y}\overline{G})}{\sqrt{\mu}}
+\frac{\partial_{\tau}\overline{G}}{\sqrt{\mu}},f\big)|
\nonumber\\
&\leq C\epsilon^{1-a}\{\|(\bar{u}_{1yy},\bar{\theta}_{yy})\|
+\|(\bar{u}_{1y},\bar{\theta}_{y})\cdot(u_{y},\theta_{y})\|
+\|(\bar{u}_{1y\tau},\bar{\theta}_{y\tau})\|+
\|(\bar{u}_{1y},\bar{\theta}_{y})\cdot(u_{\tau},\theta_{\tau})\|\}\|\langle v\rangle^{-\frac{1}{2}}f\|
\nonumber\\
&\leq C\eta\epsilon^{a-1}\|f\|_{\sigma}^{2}
+C_{\eta}\epsilon^{\frac{7}{5}+\frac{1}{15}a}(\delta+\epsilon^{a}\tau)^{-\frac{4}{3}}
+C_{\eta}k^{\frac{1}{12}}\epsilon^{\frac{3}{5}-\frac{2}{5}a}\mathcal{D}_{2}(\tau).
\end{align*}
Plugging the above related estimates into \eqref{3.36}, we get
\begin{align}
\label{3.39}
\frac{1}{2}\frac{d}{d\tau}\|f\|^{2}+c\epsilon^{a-1}\|f\|_{\sigma}^{2}
&\leq C\epsilon^{1-a}\{\|(\widetilde{u}_{y},\widetilde{\theta}_{y})\|^{2}+\|f_{y}\|_{\sigma}^{2}\}
\nonumber\\
&\quad+C\epsilon^{\frac{7}{5}+\frac{1}{15}a}(\delta+\epsilon^{a}\tau)^{-\frac{4}{3}}
+Ck^{\frac{1}{12}}\epsilon^{\frac{3}{5}-\frac{2}{5}a}\mathcal{D}_{2}(\tau),
\end{align}
by choosing suitably small $\eta$ and using the smallness of $\eta_{0}$.
\par
In summary, for some suitably large constant $\widetilde{C}_1>0$, adding $\eqref{3.35}\times\widetilde{C}_1$
to \eqref{3.39} gives
\begin{align}
\label{3.40}
&\hspace{0.5cm}\frac{d}{d\tau}\Big\{\widetilde{C}_{1}\big(\widetilde{C}_{0}\int_{\mathbb{R}} \eta(\tau,y) dy+C\int_{\mathbb{R}}\epsilon^{1-a}\widetilde{u}_{1}\widetilde{\rho}_{y}dy\big)+\frac{1}{2}\|f\|^{2}\Big\}
\nonumber\\
&\hspace{1cm}+c\|\sqrt{\bar{u}_{1y}}(\widetilde{\rho},\widetilde{u}_{1},\widetilde{\theta})\|^{2}
+c\epsilon^{1-a}\sum_{|\alpha|=1}\|\partial^{\alpha}(\widetilde{\rho},\widetilde{u},\widetilde{\theta})\|^{2}
+c\epsilon^{a-1}\|f\|_{\sigma}^{2}
\nonumber\\
&\hspace{0.5cm}\leq C\epsilon^{1-a}\sum_{|\alpha|=1}\|\partial^{\alpha}f\|^{2}_{\sigma}
+C\epsilon^{\frac{7}{5}+\frac{1}{15}a}(\delta+\epsilon^{a}\tau)^{-\frac{4}{3}}
+Ck^{\frac{1}{12}}\epsilon^{\frac{3}{5}-\frac{2}{5}a}\mathcal{D}_{2}(\tau).
\end{align}
Integrating \eqref{3.40} with respect to $\tau$, we have
\begin{align}
&\|(\widetilde{\rho},\widetilde{u},\widetilde{\theta})\|^{2}+\|f\|^{2}
+\int^{\tau}_{0}\|\sqrt{\bar{u}_{1y}}(\widetilde{\rho},\widetilde{u}_{1},\widetilde{\theta})\|^{2}ds\notag\\
&\quad+\epsilon^{1-a}\sum_{|\alpha|=1}\int^{\tau}_{0}\|\partial^{\alpha}(\widetilde{\rho},\widetilde{u},\widetilde{\theta})\|^{2}ds
+\epsilon^{a-1}\int^{\tau}_{0}\|f\|_{\sigma}^{2}ds
\nonumber\\
&\leq C\epsilon^{2(1-a)}\|\widetilde{\rho}_{y}\|^{2}+ C\epsilon^{1-a}\sum_{|\alpha|=1}\int^{\tau}_{0}\|\partial^{\alpha}f\|^{2}_{\sigma}ds
+Ck^{\frac{1}{3}}\epsilon^{\frac{6}{5}-\frac{4}{5}a}
+Ck^{\frac{1}{12}}\epsilon^{\frac{3}{5}-\frac{2}{5}a}\int^{\tau}_{0}\mathcal{D}_{2}(s)ds,\label{3.41}
\end{align}
by using \eqref{3.2} and \eqref{3.3} as well as the fact $\eta(\tau,y)\approx\|(\widetilde{\rho},\widetilde{u},\widetilde{\theta})\|^{2}$ due to \eqref{3.11}.
This completes the proof of the lower order energy estimates.
\subsection{High order energy estimates}\label{sec3.2}
In this subsection, we will derive high order energy estimates on time-spatial derivatives.
We first consider the fluid variables $(\widetilde{\rho},\widetilde{u},\widetilde{\theta})$.
Differentiating  \eqref{2.6}$_1$ with respect to $y$, we then multiply the resulting equation by $\frac{2\bar{\theta}}{3\bar{\rho}^{2}}\widetilde{\rho}_{y}$ and integrate with respect to $y$
to obtain
\begin{align}
\label{3.42}
(\widetilde{\rho}_{\tau y}+\bar{\rho} \widetilde{u}_{1yy}+2\bar{\rho}_{y}\widetilde{u}_{1y}
+\bar{\rho}_{yy}\widetilde{u}_{1},\frac{2\bar{\theta}}{3\bar{\rho}^{2}}\widetilde{\rho}_{y})
=-(J_{1y},\frac{2\bar{\theta}}{3\bar{\rho}^{2}}\widetilde{\rho}_{y}).
\end{align}
This together with the integration by parts lead to
\begin{align}
\label{3.43}
\frac{1}{2}\frac{d}{d\tau}\|(\frac{2\bar{\theta}}{3\bar{\rho}^{2}})^{1/2}\widetilde{\rho}_{y}\|^{2}
+( \widetilde{u}_{1yy},\frac{2\bar{\theta}}{3\bar{\rho}}\widetilde{\rho}_{y})
=(\frac{\widetilde{\rho}_{y}^{2}}{2},
(\frac{2\bar{\theta}}{3\bar{\rho}^{2}})_{\tau})-(2\bar{\rho}_{y}\widetilde{u}_{1y}
+\bar{\rho}_{yy}\widetilde{u}_{1}+J_{1y},\frac{2\bar{\theta}}{3\bar{\rho}^{2}}\widetilde{\rho}_{y}).
\end{align}
We are going to estimate the terms on the right-hand side of \eqref{3.43}.
By the Sobolev imbedding theorem and Lemma \ref{lem5.3}, we get
\begin{align}
\label{3.44}
|(\frac{\widetilde{\rho}_{y}^{2}}{2},
(\frac{2\bar{\theta}}{3\bar{\rho}^{2}})_{\tau})|
+&|(2\bar{\rho}_{y}\widetilde{u}_{1y},\frac{2\bar{\theta}}{3\bar{\rho}^{2}}\widetilde{\rho}_{y})|
\leq C\|(\bar{\rho}_{\tau},\bar{\theta}_{\tau})\|_{L^{\infty}}\|\widetilde{\rho}_{y}\|^{2}
+C\|\bar{\rho}_{y}\|_{L^{\infty}}\|\widetilde{u}_{1y}\|\|\widetilde{\rho}_{y}\|
\nonumber\\
&\leq C\epsilon^{a}\delta^{-1}\|\widetilde{\rho}_{y}\|^{2}+ C\epsilon^{a}\delta^{-1}\|\widetilde{u}_{1y}\|\|\widetilde{\rho}_{y}\|
\nonumber\\
&= C\epsilon^{a}\delta^{-1}\epsilon^{a-1}
\epsilon^{1-a}\|\widetilde{\rho}_{y}\|^{2}+ C\epsilon^{a}\delta^{-1}\epsilon^{a-1}
\epsilon^{1-a}\|\widetilde{u}_{1y}\|\|\widetilde{\rho}_{y}\|
\nonumber\\
&\leq C\epsilon^{2a-1}\delta^{-1}\mathcal{D}_{2}(\tau)
=C\epsilon^{2a-1}k\epsilon^{-(\frac{3}{5}-\frac{2}{5}a)}\mathcal{D}_{2}(\tau)
= Ck\epsilon^{\frac{12}{5}a-\frac{8}{5}}\mathcal{D}_{2}(\tau),
\end{align}
where in the last line, we have used \eqref{3.3} and \eqref{2.19}. It also holds by using the Sobolev imbedding theorem, Lemma \ref{lem5.3},
\eqref{2.19}, \eqref{3.4} and \eqref{3.3} that
\begin{align}
\label{3.45}
|(\bar{\rho}_{yy}\widetilde{u}_{1},\frac{2\bar{\theta}}{3\bar{\rho}^{2}}\widetilde{\rho}_{y})|
&\leq C\|\widetilde{u}_{1}\|_{L^{\infty}}\|\bar{\rho}_{yy}\|\|\widetilde{\rho}_{y}\|
\leq C\|\widetilde{u}_{1}\|^{\frac{1}{2}}\|\widetilde{u}_{1y}\|^{\frac{1}{2}}(\|\bar{\rho}_{yy}\|^{2}+\|\widetilde{\rho}_{y}\|^{2})
\nonumber\\
&\leq Ck^{\frac{1}{12}}\epsilon^{\frac{3}{5}-\frac{2}{5}a}\big\{\epsilon^{3a}\delta^{-1}(\delta+\epsilon^{a}\tau)^{-2}
+\epsilon^{a-1}\mathcal{D}_{2}(\tau)\big\}
\nonumber\\
&\leq Ck^{\frac{1}{12}}\epsilon^{\frac{7}{5}+\frac{1}{15}a}(\delta+\epsilon^{a}\tau)^{-\frac{4}{3}}
+Ck^{\frac{1}{12}}\epsilon^{\frac{3}{5}a-\frac{2}{5}}\mathcal{D}_{2}(\tau).
\end{align}
Now, we estimate the term with $ J_{1y}$. Recalling $J_{1}=\widetilde{\rho}_{y}\bar{u}_{1}+\bar{u}_{1y}\widetilde{\rho}
+(\widetilde{\rho}\,\widetilde{u}_{1})_{y}$, performing calculations similar to \eqref{3.44} and \eqref{3.45}, we can arrive at
\begin{align*}
&|((\widetilde{\rho}_{y}\bar{u}_{1})_{y}+(\bar{u}_{1y}\widetilde{\rho})_y,
\frac{2\bar{\theta}}{3\bar{\rho}^{2}}\widetilde{\rho}_{y})|
\leq|((\frac{2\bar{\theta}\bar{u}_1}{3\bar{\rho}^{2}})_y,\frac{\widetilde{\rho}_{y}^2}{2})|
+|(\bar{u}_{1y}\widetilde{\rho}_{y}+\bar{u}_{1yy}\widetilde{\rho}+\bar{u}_{1y}\widetilde{\rho}_y,
\frac{2\bar{\theta}}{3\bar{\rho}^{2}}\widetilde{\rho}_{y})|
\nonumber\\
&\hspace{2cm}\leq C\epsilon^{\frac{7}{5}+\frac{1}{15}a}(\delta+\epsilon^{a}\tau)^{-\frac{4}{3}}
+ C(k\epsilon^{\frac{12}{5}a-\frac{8}{5}}+k^{\frac{1}{12}}\epsilon^{\frac{3}{5}a-\frac{2}{5}})\mathcal{D}_{2}(\tau).
\end{align*}
On the other hand, we have from the Sobolev imbedding theorem and \eqref{3.4} that
\begin{align*}
|((\widetilde{\rho}\widetilde{u}_{1})_{yy},\frac{2\bar{\theta}}{3\bar{\rho}^{2}}\widetilde{\rho}_{y})|
&=|(\widetilde{\rho}_{yy}\widetilde{u}_{1}+\widetilde{\rho}\widetilde{u}_{1yy}
+2\widetilde{\rho}_{y}\widetilde{u}_{1y},\frac{2\bar{\theta}}{3\bar{\rho}^{2}}\widetilde{\rho}_{y})|
\nonumber\\
&\leq C\epsilon^{a-1}\sqrt{\mathcal{E}_{2}(\tau)}\mathcal{D}_{2}(\tau)
\leq Ck^{\frac{1}{12}}\epsilon^{\frac{3}{5}a-\frac{2}{5}}\mathcal{D}_{2}(\tau).
\end{align*}
With the help of the above two estimates, we get
\begin{equation}
\label{3.46}
|(J_{1y},\frac{2\bar{\theta}}{3\bar{\rho}^{2}}\widetilde{\rho}_{y})|
\leq C\epsilon^{\frac{7}{5}+\frac{1}{15}a}(\delta+\epsilon^{a}\tau)^{-\frac{4}{3}}+ C(k\epsilon^{\frac{12}{5}a-\frac{8}{5}}+k^{\frac{1}{12}}\epsilon^{\frac{3}{5}a-\frac{2}{5}})\mathcal{D}_{2}(\tau).
\end{equation}
Hence, substituting the estimates \eqref{3.44}-\eqref{3.46} into \eqref{3.43}, we can obtain
\begin{equation}
\label{3.48}
\frac{1}{2}\frac{d}{d\tau}\|(\frac{2\bar{\theta}}{3\bar{\rho}^{2}})^{1/2}\widetilde{\rho}_{y}\|^{2}
+( \widetilde{u}_{1yy},\frac{2\bar{\theta}}{3\bar{\rho}}\widetilde{\rho}_{y})
\leq C\epsilon^{\frac{7}{5}+\frac{1}{15}a}(\delta+\epsilon^{a}\tau)^{-\frac{4}{3}}
+ Ck^{\frac{1}{12}}\epsilon^{\frac{3}{5}a-\frac{2}{5}}\mathcal{D}_{2}(\tau),
\end{equation}
due to the fact that
\begin{equation}
\label{3.47}
k\epsilon^{\frac{12}{5}a-\frac{8}{5}}\leq k^{\frac{1}{12}}\epsilon^{\frac{3}{5}a-\frac{2}{5}},
\end{equation}
by the assumption of $\frac{2}{3}\leq a\leq1$ as well as the smallness of $k$.

Similar for deducing \eqref{3.42}, by differentiating the equation \eqref{2.6}$_{2}$ with respect to $y$, we then take the inner product of
the resulting equation with $\widetilde{u}_{1y}$ to obtain
\begin{align}
\label{3.49}
&\frac{1}{2}\frac{d}{d\tau}\|\widetilde{u}_{1y}\|^{2}
+((\bar{u}_{1}\widetilde{u}_{1y})_{y},\widetilde{u}_{1y})
+(\frac{2}{3}\widetilde{\theta}_{yy},\widetilde{u}_{1y})
+((\frac{2\bar{\theta}}{3\bar{\rho}}\widetilde{\rho}_{y})_{y},\widetilde{u}_{1y})
\nonumber\\
&=-(J_{2y},\widetilde{u}_{1y})+(\epsilon^{1-a}[\frac{4}{3\rho}(\mu(\theta)u_{1y})_{y}]_{y},\widetilde{u}_{1y})
-([\frac{1}{\rho}(\int_{\mathbb{R}^{3}} v^{2}_{1}L^{-1}_{M}\Theta dv)_{y}]_{y},\widetilde{u}_{1y}).
\end{align}
By integration by parts and using the Sobolev imbedding theorem, Lemma \ref{lem5.3}, \eqref{3.3} as well as \eqref{3.47}, we get
\begin{align*}
|((\bar{u}_{1}\widetilde{u}_{1y})_{y},\widetilde{u}_{1y})|&=|(\bar{u}_{1y},\frac{\widetilde{u}^{2}_{1y}}{2})|
\leq C\|\bar{u}_{1y}\|_{L^{\infty}}\|\widetilde{u}_{1y}\|^{2}
\leq C\epsilon^{a}\delta^{-1}\epsilon^{a-1}\epsilon^{1-a}\|\widetilde{u}_{1y}\|^{2}
\\
&\leq Ck\epsilon^{\frac{12}{5}a-\frac{8}{5}}\mathcal{D}_{2}(\tau)
\leq Ck^{\frac{1}{12}}\epsilon^{\frac{3}{5}a-\frac{2}{5}}\mathcal{D}_{2}(\tau).
\end{align*}
We will compute the right-hand side of \eqref{3.49} term by term. Performing the similar calculations as \eqref{3.46}, we thereby obtain
\begin{align*}
|(J_{2y},\widetilde{u}_{1y})|
&=|(\partial_{y}\{\widetilde{u}_{1}\bar{u}_{1y}+\widetilde{u}_{1}
\widetilde{u}_{1y}+\frac{2}{3}\rho_{y}\frac{\bar{\rho}\widetilde{\theta}-\widetilde{\rho}\bar{\theta}}
{\rho\bar{\rho}}\},\widetilde{u}_{1y})|
\nonumber\\
&\leq C\epsilon^{\frac{7}{5}+\frac{1}{15}a}(\delta+\epsilon^{a}\tau)^{-\frac{4}{3}}+
Ck^{\frac{1}{12}}\epsilon^{\frac{3}{5}a-\frac{2}{5}}\mathcal{D}_{2}(\tau).
\end{align*}
For the second term on the right-hand side of \eqref{3.49},
we first use an integration by parts about $y$ to obtain
\begin{align*}
([\frac{4}{3\rho}(\mu(\theta)u_{1y})_{y}]_{y},\widetilde{u}_{1y})
=-(\frac{4}{3\rho}(\mu(\theta)\widetilde{u}_{1y})_{y},\widetilde{u}_{1yy})
-(\frac{4}{3\rho}(\mu(\theta)\bar{u}_{1y})_{y},\widetilde{u}_{1yy}).
\end{align*}
In view of the Sobolev imbedding theorem, Lemma \ref{lem5.3}, \eqref{3.3} and \eqref{3.4}, one can show that
\begin{align*}
-\epsilon^{1-a}(\frac{4}{3\rho}(\mu(\theta)\widetilde{u}_{1y})_{y},\widetilde{u}_{1yy})
&=-\epsilon^{1-a}(\frac{4}{3\rho}\mu(\theta)\widetilde{u}_{1yy},\widetilde{u}_{1yy})
-\epsilon^{1-a}(\frac{4}{3\rho}\mu'(\theta)\theta_{y}\widetilde{u}_{1y},\widetilde{u}_{1yy})
\\
&\leq -c_{5}\epsilon^{1-a}\|\widetilde{u}_{1yy}\|^{2}
+C\epsilon^{1-a}\|\theta_{y}\|_{L^{\infty}}\|\widetilde{u}_{1y}\|\|\widetilde{u}_{1yy}\|
\\
&\leq -c_{5}\epsilon^{1-a}\|\widetilde{u}_{1yy}\|^{2}+Ck^{\frac{1}{12}}\epsilon^{\frac{3}{5}a-\frac{2}{5}}\mathcal{D}_{2}(\tau),
\end{align*}
for some constant $c_{5}>0$. Similarly, it holds that
\begin{align*}
\epsilon^{1-a}|(\frac{4}{3\rho}(\mu(\theta)\bar{u}_{1y})_{y},\widetilde{u}_{1yy})|
\leq C\eta\epsilon^{1-a}\|\widetilde{u}_{1yy}\|^{2}
+C_{\eta}\epsilon^{\frac{7}{5}+\frac{1}{15}a}(\delta+\epsilon^{a}\tau)^{-\frac{4}{3}}
+ C_{\eta}k^{\frac{1}{12}}\epsilon^{\frac{3}{5}a-\frac{2}{5}}\mathcal{D}_{2}(\tau).
\end{align*}
Hence, by taking $\eta>0$  small enough, there exists a  constant $c_{6}>0$ such that
\begin{align}
\label{3.50}
\epsilon^{1-a}([\frac{4}{3\rho}(\mu(\theta)u_{1y})_{y}]_{y},\widetilde{u}_{1y})
\leq-c_{6}\epsilon^{1-a}\|\widetilde{u}_{1yy}\|^{2}
+C\epsilon^{\frac{7}{5}+\frac{1}{15}a}(\delta+\epsilon^{a}\tau)^{-\frac{4}{3}}+
Ck^{\frac{1}{12}}\epsilon^{\frac{3}{5}a-\frac{2}{5}}\mathcal{D}_{2}(\tau).
\end{align}
The estimations for the last term of \eqref{3.49} is more complicated. The integration by parts and \eqref{3.19} give that
\begin{align}
\label{3.51}
&-([\frac{1}{\rho}(\int_{\mathbb{R}^{3}} v^{2}_{1}L^{-1}_{M}\Theta dv)_{y}]_{y},\widetilde{u}_{1y})
=(\frac{1}{\rho}[R\theta\int_{\mathbb{R}^{3}}B_{11}(\frac{v-u}{\sqrt{R\theta}})\frac{\Theta}{M} dv]_{y},\widetilde{u}_{1yy})
\nonumber\\
&=(\frac{1}{\rho}\int_{\mathbb{R}^{3}}[R\theta B_{11}(\frac{v-u}{\sqrt{R\theta}})\frac{1}{M}]_{y}\Theta dv,\widetilde{u}_{1yy})
+(\frac{1}{\rho}\int_{\mathbb{R}^{3}}R\theta B_{11}(\frac{v-u}{\sqrt{R\theta}})\frac{\partial_{y}\Theta}{M} dv,\widetilde{u}_{1yy}).
\end{align}
Notice that the first term on the right hand side of \eqref{3.51} is the higher nonlinear term compared with \eqref{3.20} and is easier to estimate.
Therefore, we can follow the similar method as used \eqref{3.21}-\eqref{3.27} to deal with this term,
then we can arrive at
\begin{align*}
&|(\frac{1}{\rho}\int_{\mathbb{R}^{3}}[R\theta B_{11}(\frac{v-u}{\sqrt{R\theta}})\frac{1}{M}]_{y}\Theta dv,\widetilde{u}_{1yy})|\\
&\leq \eta\epsilon^{1-a}\|\widetilde{u}_{1yy}\|^{2}
+C_{\eta}\epsilon^{\frac{7}{5}+\frac{1}{15}a}(\delta+\epsilon^{a}\tau)^{-\frac{4}{3}}
+C_{\eta}k^{\frac{1}{12}}\epsilon^{\frac{3}{5}-\frac{2}{5}a}\mathcal{D}_{2}(\tau).
\end{align*}
The second term on the right hand side of \eqref{3.51} is similar to \eqref{3.20}. We thus have from
the similar arguments as \eqref{3.20} that
\begin{align*}
|(\frac{1}{\rho}\int_{\mathbb{R}^{3}}R\theta B_{11}(\frac{v-u}{\sqrt{R\theta}})\frac{\partial_{y}\Theta}{M} dv,\widetilde{u}_{1yy})|
&\leq \eta\epsilon^{1-a}\|\widetilde{u}_{1yy}\|^{2}
+C_{\eta}\epsilon^{1-a}\sum_{|\alpha|=2}\|\partial^{\alpha}f\|^{2}_{\sigma}
\nonumber\\
&\quad+C_{\eta}\epsilon^{\frac{7}{5}+\frac{1}{15}a}(\delta+\epsilon^{a}\tau)^{-\frac{4}{3}}
+C_{\eta}k^{\frac{1}{12}}\epsilon^{\frac{3}{5}-\frac{2}{5}a}\mathcal{D}_{2}(\tau).
\end{align*}
It follows from the above three estimates that
\begin{align}
\label{3.52}
-([\frac{1}{\rho}(\int_{\mathbb{R}^{3}} v^{2}_{1}L^{-1}_{M}\Theta dv)_{y}]_{y},\widetilde{u}_{1y})
&\leq C\eta\epsilon^{1-a}\|\widetilde{u}_{1yy}\|^{2}+C_{\eta}\epsilon^{1-a}\sum_{|\alpha|=2}\|\partial^{\alpha}f\|^{2}_{\sigma}
\nonumber\\
&\quad+C_{\eta}\epsilon^{\frac{7}{5}+\frac{1}{15}a}(\delta+\epsilon^{a}\tau)^{-\frac{4}{3}}
+C_{\eta}k^{\frac{1}{12}}\epsilon^{\frac{3}{5}-\frac{2}{5}a}\mathcal{D}_{2}(\tau).
\end{align}
Substituting the above related estimates into \eqref{3.49} and  taking $\eta>0$ small enough, we get
\begin{align}
\label{3.53}
&\frac{1}{2}\frac{d}{d\tau}\|\widetilde{u}_{1y}\|^{2}+(\frac{2}{3}\widetilde{\theta}_{yy},\widetilde{u}_{1y})
+((\frac{2\bar{\theta}}{3\bar{\rho}}\widetilde{\rho}_{y})_{y},\widetilde{u}_{1y})+c\epsilon^{1-a}\|\widetilde{u}_{1yy}\|^{2}
\nonumber\\
&\leq C\epsilon^{1-a}\sum_{|\alpha|=2}\|\partial^{\alpha}f\|^{2}_{\sigma}+C\epsilon^{\frac{7}{5}+\frac{1}{15}a}(\delta+\epsilon^{a}\tau)^{-\frac{4}{3}}
+C(k^{\frac{1}{12}}\epsilon^{\frac{3}{5}-\frac{2}{5}a}+k^{\frac{1}{12}}\epsilon^{\frac{3}{5}a-\frac{2}{5}})\mathcal{D}_{2}(\tau).
\end{align}
Similar to \eqref{3.49}, by differentiating \eqref{2.6}$_{3}$ with respect to $y$ and taking the inner product of
the resulting equation with $\widetilde{u}_{iy}$  $(i=2,3)$, the similar arguments as \eqref{3.53} imply that
\begin{align}
\label{3.54}
\frac{1}{2}\frac{d}{d\tau}\|\widetilde{u}_{iy}\|^{2}+c\epsilon^{1-a}\|\widetilde{u}_{iyy}\|^{2}
&\leq  C\epsilon^{1-a}\sum_{|\alpha|=2}\|\partial^{\alpha}f\|^{2}_{\sigma}
+C\epsilon^{\frac{7}{5}+\frac{1}{15}a}(\delta+\epsilon^{a}\tau)^{-\frac{4}{3}}
\nonumber\\
&\quad+C(k^{\frac{1}{12}}\epsilon^{\frac{3}{5}-\frac{2}{5}a}
+k^{\frac{1}{12}}\epsilon^{\frac{3}{5}a-\frac{2}{5}})\mathcal{D}_{2}(\tau).
\end{align}
\par
Finally, we still deal with \eqref{2.6}$_4$. Differentiating  \eqref{2.6}$_4$ with respect
to $y$, we then take the inner product of the resulting equation with $\frac{1}{\bar{\theta}}\widetilde{\theta}_{y}$  to get
\begin{align}
\label{3.55}
&(\widetilde{\theta}_{\tau y},\frac{1}{\bar{\theta}}\widetilde{\theta}_{y})
+(\frac{2}{3}\bar{\theta}\widetilde{u}_{1yy},\frac{1}{\bar{\theta}}\widetilde{\theta}_{y})+
(\frac{2}{3}\bar{\theta}_{y}\widetilde{u}_{1y},\frac{1}{\bar{\theta}}\widetilde{\theta}_{y})
+((\bar{u}_{1}\widetilde{\theta}_{y})_{y},\frac{1}{\bar{\theta}}\widetilde{\theta}_{y})
\nonumber\\
&=-(J_{3y},\frac{1}{\bar{\theta}}\widetilde{\theta}_{y})+\epsilon^{1-a}([\frac{1}{\rho}(\kappa(\theta)\theta_{y})_{y}]_{y},\frac{1}{\bar{\theta}}\widetilde{\theta}_{y})
+\epsilon^{1-a}([\frac{4}{3\rho}\mu(\theta)u^{2}_{1y}]_{y}
+[\frac{1}{\rho}\sum_{i=2}^{3}\mu(\theta)u^{2}_{iy}]_{y},\frac{1}{\bar{\theta}}\widetilde{\theta}_{y})
\nonumber\\
&\quad+([\frac{1}{\rho}u(\int_{\mathbb{R}^3} v_{1}v L^{-1}_{M}\Theta dv)_{y}]_{y},\frac{1}{\bar{\theta}}\widetilde{\theta}_{y})-([\frac{1}{\rho}
(\int_{\mathbb{R}^3}v_{1}\frac{|v|^{2}}{2}L^{-1}_{M}\Theta dv)_{y}]_{y},\frac{1}{\bar{\theta}}\widetilde{\theta}_{y}).
\end{align}
We will estimate \eqref{3.55} term by term. First of all,
one has
\begin{align*}
|(\frac{2}{3}\bar{\theta}_{y}\widetilde{u}_{1y},\frac{1}{\bar{\theta}}\widetilde{\theta}_{y})|
\leq C\|\bar{\theta}_{y}\|_{L^{\infty}}\|\widetilde{u}_{1y}\|\|\widetilde{\theta}_{y}\|
\leq Ck^{\frac{1}{12}}\epsilon^{\frac{3}{5}a-\frac{2}{5}}\mathcal{D}_{2}(\tau),
\end{align*}
according to the Sobolev imbedding theorem, Lemma \ref{lem5.3} and \eqref{3.3}.
Similarly, it holds that
\begin{align*}
|((\bar{u}_{1}\widetilde{\theta}_{y})_{y},\frac{1}{\bar{\theta}}\widetilde{\theta}_{y})|
&\leq|(\bar{u}_{1y}\widetilde{\theta}_{y},\frac{1}{\bar{\theta}}\widetilde{\theta}_{y})|
+|(\bar{u}_{1}\widetilde{\theta}_{yy},\frac{1}{\bar{\theta}}\widetilde{\theta}_{y})|
\\
&\leq|(\bar{u}_{1y}\widetilde{\theta}_{y},\frac{1}{\bar{\theta}}\widetilde{\theta}_{y})|
+|((\frac{\bar{u}_{1}}{\bar{\theta}})_{y},\frac{1}{2}\widetilde{\theta}^{2}_{y})|
\leq Ck^{\frac{1}{12}}\epsilon^{\frac{3}{5}a-\frac{2}{5}}\mathcal{D}_{2}(\tau).
\end{align*}
Recalling $J_{3}$ defined by \eqref{2.5} and following the similar method used in \eqref{3.46}, we have from \eqref{3.47} that
\begin{align*}
|(J_{3y},\frac{1}{\bar{\theta}}\widetilde{\theta}_{y})|
&=|(\{\frac{2}{3}(\widetilde{\theta}\bar{u}_{1y}+\widetilde{\theta}\widetilde{u}_{1y})
+(\widetilde{\theta}_{y}\widetilde{u}_{1}+\bar{\theta}_{y}\widetilde{u}_{1})\}_{y},\frac{1}{\bar{\theta}}\widetilde{\theta}_{y})|
\\
&\leq C\epsilon^{\frac{7}{5}+\frac{1}{15}a}(\delta+\epsilon^{a}\tau)^{-\frac{4}{3}}+ Ck^{\frac{1}{12}}\epsilon^{\frac{3}{5}a-\frac{2}{5}}\mathcal{D}_{2}(\tau).
\end{align*}
Similar arguments as \eqref{3.50} imply that
\begin{align*}
&\epsilon^{1-a}([\frac{1}{\rho}(\kappa(\theta)\theta_{y})_{y}]_{y},\frac{1}{\bar{\theta}}\partial_{y}\widetilde{\theta})
+\epsilon^{1-a}|([\frac{4}{3\rho}\mu(\theta)u^{2}_{1y}]_{y}
+[\frac{1}{\rho}\sum_{i=2}^{3}\mu(\theta)u^{2}_{iy}]_{y},\frac{1}{\bar{\theta}}\widetilde{\theta}_{y})|
\\
&\hspace{1cm}\leq-c\epsilon^{1-a}\|\widetilde{\theta}_{yy}\|^{2}
+C\epsilon^{\frac{7}{5}+\frac{1}{15}a}(\delta+\epsilon^{a}\tau)^{-\frac{4}{3}}+
Ck^{\frac{1}{12}}\epsilon^{\frac{3}{5}a-\frac{2}{5}}\mathcal{D}_{2}(\tau).
\end{align*}
On the other hand, we use the integration by parts, \eqref{3.18} and \eqref{3.19} to obtain
\begin{align*}
&([\frac{1}{\rho}u(\int_{\mathbb{R}^3} v_{1}v L^{-1}_{M}\Theta dv)_{y}]_{y},\frac{1}{\bar{\theta}}\widetilde{\theta}_{y})-([\frac{1}{\rho}
(\int_{\mathbb{R}^3}v_{1}\frac{|v|^{2}}{2}L^{-1}_{M}\Theta dv)_{y}]_{y},\frac{1}{\bar{\theta}}\widetilde{\theta}_{y})
\\
&=([\int_{\mathbb{R}^3}(v_{1}\frac{|v|^{2}}{2}-v_{1}uv) L^{-1}_{M}\Theta dv]_{y},\frac{1}{\rho}(\frac{1}{\bar{\theta}}\widetilde{\theta}_{y})_{y})+
(u_{y}\int_{\mathbb{R}^3} v_{1}v L^{-1}_{M}\Theta dv,\frac{1}{\rho}(\frac{1}{\bar{\theta}}\widetilde{\theta}_{y})_{y})
\\
&=([(R\theta)^{\frac{3}{2}}\int_{\mathbb{R}^{3}}A_{1}(\frac{v-u}{\sqrt{R\theta}})\frac{\Theta}{M} dv]_{y},\frac{1}{\rho}(\frac{1}{\bar{\theta}}\widetilde{\theta}_{y})_{y})+\sum_{i=1}^{3}
(u_{iy}R\theta\int_{\mathbb{R}^{3}}B_{1i}(\frac{v-u}{\sqrt{R\theta}})\frac{\Theta}{M} dv,\frac{1}{\rho}(\frac{1}{\bar{\theta}}\widetilde{\theta}_{y})_{y})\notag\\
&:=H_{5}.
\end{align*}
Notice that the above terms are similar to \eqref{3.51} and then the similar arguments as \eqref{3.52} imply that
\begin{align*}
H_{5}\leq \eta\epsilon^{1-a}\|\widetilde{\theta}_{yy}\|^{2}+C_{\eta}\epsilon^{1-a}\sum_{|\alpha|=2}\|\partial^{\alpha}f\|^{2}_{\sigma}
+C_{\eta}\epsilon^{\frac{7}{5}+\frac{1}{15}a}(\delta+\epsilon^{a}\tau)^{-\frac{4}{3}}
+C_{\eta}k^{\frac{1}{12}}\epsilon^{\frac{3}{5}-\frac{2}{5}a}\mathcal{D}_{2}(\tau).
\end{align*}
Substituting the above related estimates into \eqref{3.55} and choosing $\eta>0$ small enough, we can arrive at
\begin{align}
\label{3.56}
&\frac{1}{2}\frac{d}{d\tau}\|(\frac{1}{\bar{\theta}})^{1/2}\widetilde{\theta}_{y}\|^{2}
+(\frac{2}{3}\widetilde{u}_{1yy},\widetilde{\theta}_{y})+c\epsilon^{1-a}\|\widetilde{\theta}_{yy}\|^{2}
\nonumber\\
&\leq C\epsilon^{1-a}\sum_{|\alpha|=2}\|\partial^{\alpha}f\|^{2}_{\sigma}
+C\epsilon^{\frac{7}{5}+\frac{1}{15}a}(\delta+\epsilon^{a}\tau)^{-\frac{4}{3}}+C(k^{\frac{1}{12}}\epsilon^{\frac{3}{5}-\frac{2}{5}a}
+k^{\frac{1}{12}}\epsilon^{\frac{3}{5}a-\frac{2}{5}})\mathcal{D}_{2}(\tau).
\end{align}
As a consequence, the sum of \eqref{3.48}, \eqref{3.53}, \eqref{3.54} and \eqref{3.56} give
\begin{align}
\label{3.57}
&\frac{d}{d\tau}\Big\{\|(\frac{2\bar{\theta}}{3\bar{\rho}^{2}})^{1/2}\widetilde{\rho}_{y}\|^{2}+
\|\widetilde{u}_{y}\|^{2}+
\|(\frac{1}{\bar{\theta}})^{1/2}\widetilde{\theta}_{y}\|^{2}\Big\}+c\epsilon^{1-a}\|(\widetilde{u}_{yy},\widetilde{\theta}_{yy})\|^{2}
\nonumber\\
&\leq C\epsilon^{1-a}\sum_{|\alpha|=2}\|\partial^{\alpha}f\|^{2}_{\sigma}
+C\epsilon^{\frac{7}{5}+\frac{1}{15}a}(\delta+\epsilon^{a}\tau)^{-\frac{4}{3}}
+C(k^{\frac{1}{12}}\epsilon^{\frac{3}{5}-\frac{2}{5}a}+k^{\frac{1}{12}}\epsilon^{\frac{3}{5}a-\frac{2}{5}})\mathcal{D}_{2}(\tau).
\end{align}
Differentiating the equations \eqref{2.6} with respect to $\tau$ and multiplying the resulting equations
by $\frac{2\bar{\theta}}{3\bar{\rho}^{2}}\widetilde{\rho}_{\tau}$, $\widetilde{u}_{1\tau}$, $\widetilde{u}_{i\tau}$ with $i=2,3$ and
$\frac{1}{\bar{\theta}}\widetilde{\theta}_{\tau}$ respectively, then adding them together and integrating with respect to $y$ over $\mathbb{R}$, we have by the similar arguments as \eqref{3.57} such that
\begin{align}
\label{3.58}
&\frac{d}{d\tau}\big\{\|(\frac{2\bar{\theta}}{3\bar{\rho}^{2}})^{1/2}\widetilde{\rho}_{\tau}\|^{2}+
\|\widetilde{u}_{\tau}\|^{2}+
\|(\frac{1}{\bar{\theta}})^{1/2}\widetilde{\theta}_{\tau}\|^{2}\big\}
+c\epsilon^{1-a}\|(\widetilde{u}_{\tau y},\widetilde{\theta}_{\tau y})\|^{2}
\nonumber\\
&\leq C\epsilon^{1-a}\sum_{|\alpha|=2}\|\partial^{\alpha}f\|^{2}_{\sigma}
+C\epsilon^{\frac{7}{5}+\frac{1}{15}a}(\delta+\epsilon^{a}\tau)^{-\frac{4}{3}}
+C(k^{\frac{1}{12}}\epsilon^{\frac{3}{5}-\frac{2}{5}a}
+k^{\frac{1}{12}}\epsilon^{\frac{3}{5}a-\frac{2}{5}})\mathcal{D}_{2}(\tau).
\end{align}
\par
The dissipative terms $\|\widetilde{\rho}_{yy}\|^{2}$ and $\|\widetilde{\rho}_{\tau y}\|^{2}$ are not included
in the dissipation of \eqref{3.57} and \eqref{3.58}.
For this, we use the system \eqref{2.4} again.
Differentiating the first and second equation of \eqref{2.4} with respect to $y$ yields
\begin{align}
\label{3.59}
\begin{cases}
\widetilde{\rho}_{\tau y}+(\bar{\rho} \widetilde{u}_{1y})_{y}+(\bar{\rho}_{y} \widetilde{u}_{1})_{y}=-J_{1y},
\\
\widetilde{u}_{1\tau y}+(\bar{u}_{1}\widetilde{u}_{1y})_{y}
+\frac{2}{3}\widetilde{\theta}_{yy}+(\frac{2\bar{\theta}}{3\bar{\rho}}\widetilde{\rho}_{y})_{y}
=-(\frac{1}{\rho}\int v^{2}_{1}G_{y}dv)_{y}-J_{2y}.
\end{cases}
\end{align}
Taking the inner product of \eqref{3.59}$_2$ with $\widetilde{\rho}_{yy}$ and performing  the similar calculations as \eqref{3.32} yield that
\begin{align}
\label{3.60}
 \epsilon^{1-a}\|\widetilde{\rho}_{yy}\|^{2}\leq&-C\epsilon^{1-a}(\widetilde{u}_{1y},\widetilde{\rho}_{yy})_{\tau}
+C\epsilon^{1-a}\|(\widetilde{u}_{yy},\widetilde{\theta}_{yy})\|^{2}+C\epsilon^{1-a}\|f_{yy}\|_{\sigma}^{2}
\nonumber\\
&+C\epsilon^{\frac{7}{5}+\frac{1}{15}a}(\delta+\epsilon^{a}\tau)^{-\frac{4}{3}}
+Ck^{\frac{1}{12}}\epsilon^{\frac{3}{5}-\frac{2}{5}a}\mathcal{D}_{2}(\tau).
\end{align}
Similarly, the following estimate holds
\begin{align}
\label{3.61}
 \epsilon^{1-a}\|\widetilde{\rho}_{\tau y}\|^{2}
\leq C\epsilon^{1-a}\|(\widetilde{\rho}_{yy},\widetilde{u}_{yy})\|^{2}
+C\epsilon^{\frac{7}{5}+\frac{1}{15}a}(\delta+\epsilon^{a}\tau)^{-\frac{4}{3}}
+Ck^{\frac{1}{12}}\epsilon^{\frac{3}{5}-\frac{2}{5}a}\mathcal{D}_{2}(\tau).
\end{align}
It remains to estimate the dissipative term $\|(\widetilde{\rho}_{\tau\tau},\widetilde{u}_{\tau\tau},\widetilde{\theta}_{\tau\tau})\|^{2}$.
Differentiating the equations \eqref{2.4} with respect to $\tau$ and multiplying the resulting equations
by $\widetilde{\rho}_{\tau\tau}$, $\widetilde{u}_{1\tau\tau}$, $\widetilde{u}_{i\tau\tau}$ with $i=2,3$ and
$\widetilde{\theta}_{\tau\tau}$ respectively, then adding them together and integrating with respect to $y$ over $\mathbb{R}$,
one can arrive at
\begin{align}
\label{3.62}
\epsilon^{1-a}\|(\widetilde{\rho}_{\tau\tau},\widetilde{u}_{\tau\tau},\widetilde{\theta}_{\tau\tau})\|^{2}
&\leq C \epsilon^{1-a}\|(\widetilde{\rho}_{\tau y},\widetilde{u}_{\tau y},
\widetilde{\theta}_{\tau y})\|^{2}+ C\epsilon^{1-a}\sum_{|\alpha|=2}\|\partial^{\alpha}f\|^{2}_{\sigma}
\nonumber\\
&\quad+C\epsilon^{\frac{7}{5}+\frac{1}{15}a}(\delta+\epsilon^{a}\tau)^{-\frac{4}{3}}
+C(k^{\frac{1}{12}}\epsilon^{\frac{3}{5}-\frac{2}{5}a}+k^{\frac{1}{12}}\epsilon^{\frac{3}{5}a-\frac{2}{5}})\mathcal{D}_{2}(\tau).
\end{align}
\par
In summary, for some suitably large constants $\overline{C}_{1}\gg C_{1}>0$, we have from
a suitable linear combination of \eqref{3.57}, \eqref{3.58}, \eqref{3.60}, \eqref{3.61} and  \eqref{3.62} that
\begin{align}
\label{3.63}
&\frac{d}{d\tau}\sum_{|\alpha|=1}\Big\{\overline{C}_{1}
(\|(\frac{2\bar{\theta}}{3\bar{\rho}^{2}})^{1/2}\partial^{\alpha}\widetilde{\rho}\|^{2}+
\|\partial^{\alpha}\widetilde{u}\|^{2}+\|(\frac{1}{\bar{\theta}})^{1/2}\partial^{\alpha}\widetilde{\theta}\|^{2})
+C_{1}\epsilon^{1-a}(\widetilde{u}_{1y},\widetilde{\rho}_{yy})\Big\}
\nonumber\\
&\hspace{2cm}+c\epsilon^{1-a}\sum_{|\alpha|=2}\|\partial^{\alpha}(\widetilde{\rho},\widetilde{u},
\widetilde{\theta})\|^{2}
\nonumber\\
&\leq C\epsilon^{1-a}\sum_{|\alpha|=2}\|\partial^{\alpha}f\|^{2}_{\sigma}
+C\epsilon^{\frac{7}{5}+\frac{1}{15}a}(\delta+\epsilon^{a}\tau)^{-\frac{4}{3}}
+C(k^{\frac{1}{12}}\epsilon^{\frac{3}{5}-\frac{2}{5}a}
+k^{\frac{1}{12}}\epsilon^{\frac{3}{5}a-\frac{2}{5}})\mathcal{D}_{2}(\tau).
\end{align}
Integrating \eqref{3.63} with respect to $\tau$ and using \eqref{3.2} with \eqref{3.3}, we can obtain
\begin{align}
&\sum_{|\alpha|=1}\|\partial^{\alpha}(\widetilde{\rho},\widetilde{u},\widetilde{\theta})\|^{2}
+\epsilon^{1-a}\sum_{|\alpha|=2}\int^{\tau}_{0}\|\partial^{\alpha}(\widetilde{\rho},\widetilde{u},
\widetilde{\theta})\|^{2}ds
\nonumber\\
&\leq C\epsilon^{2(1-a)}\|\widetilde{\rho}_{yy}\|^{2}+C\epsilon^{1-a}\sum_{|\alpha|=2}\int^{\tau}_{0}\|\partial^{\alpha}f\|^{2}_{\sigma}ds
+Ck^{\frac{1}{3}}\epsilon^{\frac{6}{5}-\frac{4}{5}a}\notag\\
&\quad+C(k^{\frac{1}{12}}\epsilon^{\frac{3}{5}-\frac{2}{5}a}+k^{\frac{1}{12}}\epsilon^{\frac{3}{5}a-\frac{2}{5}})\int^{\tau}_{0}\mathcal{D}_{2}(s)ds.
\label{3.64}
\end{align}

Next, we will derive the derivative estimates for the microscopic component $f$.
Taking the derivative $\partial^{\alpha}$ of \eqref{2.11} with $|\alpha|=1$ and taking the inner product with $\partial^{\alpha}f$,
we can obtain
\begin{align}
\label{3.66}
&(\partial_{\tau}\partial^{\alpha}f+v_{1}\partial_{y}\partial^{\alpha}f
-\epsilon^{a-1}\mathcal{L}\partial^{\alpha}f,\partial^{\alpha}f)
-\epsilon^{a-1}(\partial^{\alpha}\Gamma(f,\frac{M-\mu}{\sqrt{\mu}})+\partial^{\alpha}\Gamma(\frac{M-\mu}{\sqrt{\mu}},f),\partial^{\alpha}f)
\nonumber\\
&=\epsilon^{a-1}(\partial^{\alpha}\Gamma(\frac{G}{\sqrt{\mu}},\frac{G}{\sqrt{\mu}}),\partial^{\alpha}f)
+(\frac{\partial^{\alpha}P_{0}(v_{1}\sqrt{\mu}\partial_{y}f)}{\sqrt{\mu}},\partial^{\alpha}f)
-(\frac{\partial^{\alpha}P_{1}(v_{1}\partial_{y}\overline{G})}{\sqrt{\mu}},\partial^{\alpha}f)
\nonumber\\
&\quad-(\frac{1}{\sqrt{\mu}}\partial^{\alpha}P_{1}v_{1}M\big\{\frac{|v-u|^{2}
\widetilde{\theta}_{y}}{2R\theta^{2}}+\frac{(v-u)\cdot\widetilde{u}_{y}}{R\theta}\big\},\partial^{\alpha}f)
-(\frac{\partial_{\tau}\partial^{\alpha}\overline{G}}{\sqrt{\mu}},\partial^{\alpha}f).
\end{align}
We will compute each term for \eqref{3.66}. First of all, we have from the integration by parts and \eqref{5.5} that
$$
(\partial_{\tau}\partial^{\alpha}f+v_{1}\partial_{y}\partial^{\alpha}f
-\epsilon^{a-1}\mathcal{L}\partial^{\alpha}f,\partial^{\alpha}f)\geq\frac{1}{2}\frac{d}{d\tau}\|\partial^{\alpha}f\|^{2}
+\sigma_{1}\epsilon^{a-1}\|\partial^{\alpha}f\|_{\sigma}^{2}.
$$
From \eqref{5.10} and \eqref{5.20}, one can see that
\begin{align*}
&\epsilon^{a-1}|(\partial^{\alpha}\Gamma(f,\frac{M-\mu}{\sqrt{\mu}})
+\partial^{\alpha}\Gamma(\frac{M-\mu}{\sqrt{\mu}},f),\partial^{\alpha}f)|
\\
&\leq C\eta\epsilon^{a-1}\|\partial^{\alpha}f\|^{2}_{\sigma}
+C_{\eta}(\eta_{0}+k^{\frac{1}{12}}\epsilon^{\frac{3}{5}-\frac{2}{5}a})\mathcal{D}_{2}(\tau),
\end{align*}
and
\begin{align*}
\epsilon^{a-1}|(\partial^{\alpha}\Gamma(\frac{G}{\sqrt{\mu}},\frac{G}{\sqrt{\mu}}),\partial^{\alpha}f)|
\leq  C\eta\epsilon^{a-1}\|\partial^{\alpha}f\|^{2}_{\sigma}
+C_{\eta}\epsilon^{\frac{7}{5}+\frac{1}{15}a}(\delta+\epsilon^{a}\tau)^{-\frac{4}{3}}
+C_{\eta}k^{\frac{1}{12}}\epsilon^{\frac{3}{5}-\frac{2}{5}a}\mathcal{D}_{2}(\tau).
\end{align*}
By \eqref{1.9}, \eqref{1.8}, \eqref{1.26}, \eqref{2.17}, the Sobolev imbedding theorem, Lemma \ref{lem5.3}, \eqref{3.3} and \eqref{3.4}, one has
\begin{align*}
|&(\frac{\partial^{\alpha}P_{0}(v_{1}\sqrt{\mu}\partial_{y}f)}{\sqrt{\mu}},\partial^{\alpha}f)|
\leq \|\sum_{i=0}^{4}\langle v\rangle^{\frac{1}{2}}\mu^{-\frac{1}{2}}\partial^{\alpha}(\langle v_{1}\sqrt{\mu}\partial_{y}f,\frac{\chi_{i}}{M}\rangle\chi_{i})\|
\|\langle v\rangle^{-\frac{1}{2}}\partial^{\alpha}f\|
\\
&\leq \eta\epsilon^{a-1}\|\partial^{\alpha}f\|^{2}_{\sigma}+C_{\eta}\epsilon^{1-a}\|\partial^{\alpha}\partial_{y}f\|^{2}_{\sigma}
+C_{\eta}\epsilon^{\frac{7}{5}+\frac{1}{15}a}(\delta+\epsilon^{a}\tau)^{-\frac{4}{3}}
+C_{\eta}k^{\frac{1}{12}}\epsilon^{\frac{3}{5}-\frac{2}{5}a}\mathcal{D}_{2}(\tau),
\end{align*}
and
\begin{align*}
&|(\frac{1}{\sqrt{\mu}}\partial^{\alpha}P_{1}v_{1}M\big\{\frac{|v-u|^{2}
\widetilde{\theta}_{y}}{2R\theta^{2}}+\frac{(v-u)\cdot\widetilde{u}_{y}}{R\theta}\big\},\partial^{\alpha}f)|
\\
&\leq C\|\langle v\rangle^{\frac{1}{2}}\mu^{-\frac{1}{2}}\partial^{\alpha}P_{1}v_{1}M\big\{\frac{|v-u|^{2}
\widetilde{\theta}_{y}}{2R\theta^{2}}+\frac{(v-u)\cdot\widetilde{u}_{y}}{R\theta}\big\}\|\|\langle v\rangle^{-\frac{1}{2}}\partial^{\alpha}f\|
\\
&\leq \eta\epsilon^{a-1}\|\partial^{\alpha}f\|^{2}_{\sigma}
+C_{\eta}\epsilon^{1-a}\|(\partial^{\alpha}\widetilde{u}_{y},\partial^{\alpha}\widetilde{\theta}_{y})\|^{2}
+C_{\eta}\epsilon^{\frac{7}{5}+\frac{1}{15}a}(\delta+\epsilon^{a}\tau)^{-\frac{4}{3}}
+C_{\eta}k^{\frac{1}{12}}\epsilon^{\frac{3}{5}-\frac{2}{5}a}\mathcal{D}_{2}(\tau).
\end{align*}
Here we have used the fact that $|\langle v\rangle^{m}\mu^{-\frac{1}{2}}M|^{2}_{\sigma}\leq C$
for any $m\geq0$  by \eqref{1.26}.
The terms involving $\overline{G}$ are dominated by
\begin{align*}
&|(\frac{\partial^{\alpha}P_{1}(v_{1}\partial_{y}\overline{G})}{\sqrt{\mu}},\partial^{\alpha}f)|
+|(\frac{\partial_{\tau}\partial^{\alpha}\overline{G}}{\sqrt{\mu}},\partial^{\alpha}f)|
\\
& \leq C\eta\epsilon^{a-1}\|\partial^{\alpha}f\|^{2}_{\sigma}
+C_{\eta}\epsilon^{\frac{7}{5}+\frac{1}{15}a}(\delta+\epsilon^{a}\tau)^{-\frac{4}{3}}
+C_{\eta}k^{\frac{1}{12}}\epsilon^{\frac{3}{5}-\frac{2}{5}a}\mathcal{D}_{2}(\tau),
\end{align*}
according to \eqref{5.26}, \eqref{1.9}, the Sobolev imbedding theorem, Lemma \ref{lem5.3}, \eqref{3.3} and \eqref{3.4}.
Hence, by taking $\eta>0$ small enough, we
deduce from \eqref{3.66} and the above related estimates that
\begin{align}
\label{3.67}
&\sum_{|\alpha|=1}\frac{1}{2}\frac{d}{d\tau}\|\partial^{\alpha}f\|^{2}
+c\sum_{|\alpha|=1}\epsilon^{a-1}\|\partial^{\alpha}f\|_{\sigma}^{2}
\nonumber\\
&\hspace{1cm}\leq C\epsilon^{1-a}\sum_{|\alpha|=1}\Big\{\|\partial^{\alpha}(\widetilde{\rho}_{y},\widetilde{u}_{y},\widetilde{\theta}_{y})\|^{2}+
\|\partial^{\alpha}f_y\|_{\sigma}^{2}\Big\}
\nonumber\\
&\hspace{2cm}+C\epsilon^{\frac{7}{5}+\frac{1}{15}a}(\delta+\epsilon^{a}\tau)^{-\frac{4}{3}}
+C(\eta_{0}+k^{\frac{1}{12}}\epsilon^{\frac{3}{5}-\frac{2}{5}a})\mathcal{D}_{2}(\tau).
\end{align}
Integrating \eqref{3.67} with respect to $\tau$ and using \eqref{3.2} with \eqref{3.3}, then by a suitable linear combination of the resulting equation and \eqref{3.64},
we get
\begin{align}
\label{3.68}
&\sum_{|\alpha|=1}(\|\partial^{\alpha}(\widetilde{\rho},\widetilde{u},\widetilde{\theta})\|^{2}+\|\partial^{\alpha}f\|^{2})
+\epsilon^{1-a}\sum_{|\alpha|=2}\int^{\tau}_{0}\|\partial^{\alpha}(\widetilde{\rho},\widetilde{u},
\widetilde{\theta})\|^{2}ds+\epsilon^{a-1}\sum_{|\alpha|=1}\int^{\tau}_{0}\|\partial^{\alpha}f\|_{\sigma}^{2}ds
\nonumber\\
&\leq C\epsilon^{2(1-a)}\|\widetilde{\rho}_{yy}\|^{2}+C\epsilon^{1-a}\sum_{|\alpha|=2}\int^{\tau}_{0}\|\partial^{\alpha}f\|^{2}_{\sigma}ds
\nonumber\\
&\quad+Ck^{\frac{1}{3}}\epsilon^{\frac{6}{5}-\frac{4}{5}a}+C(\eta_{0}+k^{\frac{1}{12}}\epsilon^{\frac{3}{5}-\frac{2}{5}a}
+k^{\frac{1}{12}}\epsilon^{\frac{3}{5}a-\frac{2}{5}})\int^{\tau}_{0}\mathcal{D}_{2}(s)ds.
\end{align}
\par
Finally, we  derive the higher order derivative
estimates of the microscopic component $f$. In terms of \eqref{2.8},
\eqref{2.9} and \eqref{2.13}, one has
\begin{align}
\label{3.69}
\partial_{\tau}(\frac{F}{\sqrt{\mu}})+v_{1}\partial_{y}(\frac{F}{\sqrt{\mu}})
-\epsilon^{a-1}\mathcal{L}f&=\epsilon^{a-1}\Gamma(f,\frac{M-\mu}{\sqrt{\mu}})+
\epsilon^{a-1}\Gamma(\frac{M-\mu}{\sqrt{\mu}},f)
+\epsilon^{a-1}\Gamma(\frac{G}{\sqrt{\mu}},\frac{G}{\sqrt{\mu}})
\nonumber\\
&\quad+\frac{1}{\sqrt{\mu}}P_{1}v_{1}M\big\{\frac{|v-u|^{2}
\overline{\theta}_{y}}{2R\theta^{2}}+\frac{(v-u)\cdot\bar{u}_{y}}{R\theta}\big\}.
\end{align}
Taking the derivative $\partial^{\alpha}$ of \eqref{3.69} with $|\alpha|=2$
and then  taking the inner product of the resulting equation with $\frac{\partial^{\alpha}F}{\sqrt{\mu}}$, we obtain
\begin{align}
\label{3.70}
&\frac{1}{2}\frac{d}{d\tau}\|\frac{\partial^{\alpha}F}{\sqrt{\mu}}\|^{2}
-\epsilon^{a-1}(\mathcal{L}\partial^{\alpha}f,\frac{\partial^{\alpha}F}{\sqrt{\mu}})
=\epsilon^{a-1}(\partial^{\alpha}\Gamma(f,\frac{M-\mu}{\sqrt{\mu}})+
\partial^{\alpha}\Gamma(\frac{M-\mu}{\sqrt{\mu}},f),\frac{\partial^{\alpha}F}{\sqrt{\mu}})
\nonumber\\
&+\epsilon^{a-1}(\partial^{\alpha}\Gamma(\frac{G}{\sqrt{\mu}},\frac{G}{\sqrt{\mu}}),\frac{\partial^{\alpha}F}{\sqrt{\mu}})
+(\frac{1}{\sqrt{\mu}}\partial^{\alpha}P_{1}v_{1}M\big\{\frac{|v-u|^{2}
\overline{\theta}_{y}}{2R\theta^{2}}+\frac{(v-u)\cdot\bar{u}_{y}}{R\theta}\big\},\frac{\partial^{\alpha}F}{\sqrt{\mu}}).
\end{align}
Recall $F=M+\overline{G}+\sqrt{\mu}f$, we first have from \eqref{5.5} that
$$
-\epsilon^{a-1}(\mathcal{L}\partial^{\alpha}f,\partial^{\alpha}f)
\geq\sigma_{1}\epsilon^{a-1}\|\partial^{\alpha}f\|_{\sigma}^{2}.
$$
Due to $\mathcal{L}f=\Gamma(\sqrt{\mu},f)+\Gamma(f,\sqrt{\mu})$, we get from \eqref{5.7} that
\begin{align*}
&\epsilon^{a-1}|(\mathcal{L}\partial^{\alpha}f,\frac{\partial^{\alpha}\overline{G}}{\sqrt{\mu}})|
\leq C\epsilon^{a-1}\|\partial^{\alpha}f\|_{\sigma}\|\frac{\partial^{\alpha}\overline{G}}{\sqrt{\mu}}\|_{\sigma}
\nonumber\\
&\leq \eta\epsilon^{a-1}\|\partial^{\alpha}f\|^{2}_{\sigma}
+C_{\eta}\big\{\|\partial^{\alpha}(\bar{u}_{1y},\bar{\theta}_{y})\|^{2}+\cdot\cdot\cdot
+\|(\bar{u}_{1y},\bar{\theta}_{y})\partial^{\alpha}(u,\theta)\|^{2}\big\}
\nonumber\\
&\leq \eta\epsilon^{a-1}\|\partial^{\alpha}f\|^{2}_{\sigma}
+C_{\eta}\epsilon^{\frac{7}{5}+\frac{1}{15}a}(\delta+\epsilon^{a}\tau)^{-\frac{4}{3}}
+C_{\eta}k^{\frac{1}{12}}\epsilon^{\frac{3}{5}-\frac{2}{5}a}\mathcal{D}_{2}(\tau),
\end{align*}
according to \eqref{5.26}, the Sobolev imbedding theorem, Lemma \ref{lem5.3} and \eqref{3.3}.
For $\bar{\alpha}\geq 2$, we can see that
\begin{align}
\label{3.72}
\partial^{\bar{\alpha}}M&=M\Big(\frac{\partial^{\bar{\alpha}}\rho}{\rho}-\frac{3\partial^{\bar{\alpha}}\theta}{2\theta} +\frac{(v-u)^{2}\partial^{\bar{\alpha}}\theta}{2R\theta^{2}}+\sum^{3}_{i=1}
\frac{\partial^{\bar{\alpha}}u_{i}(v_{i}-u_{i})}{R\theta}\Big)+\cdot\cdot\cdot
\nonumber\\
&=\Big(\mu+(M-\mu)\Big)\Big(\frac{\partial^{\bar{\alpha}}\rho}{\rho}-\frac{3\partial^{\bar{\alpha}}\theta}{2\theta} +\frac{(v-u)^{2}\partial^{\bar{\alpha}}\theta}{2R\theta^{2}}+\sum^{3}_{i=1}
\frac{\partial^{\bar{\alpha}}u_{i}(v_{i}-u_{i})}{R\theta}\Big)+\cdot\cdot\cdot\notag\\
&=J^{\bar{\alpha}}_{1}
+J^{\bar{\alpha}}_{2}+J^{\bar{\alpha}}_{3}.
\end{align}
Here the terms $J_{1}$ and $J_{2}$ are the high order derivatives of $(\rho,u,\theta)$
with $\mu$ and $M-\mu$ and $J_{3}$ is the low order derivatives with $M$.
Since $\frac{J^{\alpha}_{1}}{\sqrt{\mu}}\in\ker{\mathcal{L}}$, it follows that
$(\mathcal{L}f,\frac{J^{\alpha}_{1}}{\sqrt{\mu}})=0$. For the terms $\frac{J^{\alpha}_{2}}{\sqrt{\mu}}$ and
$\frac{J^{\alpha}_{3}}{\sqrt{\mu}}$, we use \eqref{5.7}, Lemma \ref{lem5.3} and \eqref{3.3} to get
\begin{align*}
\epsilon^{a-1}|(\mathcal{L}\partial^{\alpha}f,\frac{J^{\alpha}_{2}}{\sqrt{\mu}})|
&\leq C\epsilon^{a-1}\|\partial^{\alpha}f\|_{\sigma}\|\frac{J^{\alpha}_{2}}{\sqrt{\mu}}\|_{\sigma}
\\
&\leq C\eta_{0}\epsilon^{a-1}(\|\partial^{\alpha}f\|_{\sigma}^{2}+\|\partial^{\alpha}(\widetilde{\rho},\widetilde{u},\widetilde{\theta})\|^{2})
+C\epsilon^{a-1}\epsilon^{\frac{7}{5}+\frac{1}{15}a}(\delta+\epsilon^{a}\tau)^{-\frac{4}{3}},
\end{align*}
and
\begin{align*}
&\epsilon^{a-1}|(\mathcal{L}\partial^{\alpha}f,\frac{J^{\alpha}_{3}}{\sqrt{\mu}})|
\leq C\epsilon^{a-1}\|\partial^{\alpha}f\|_{\sigma}\|\frac{J^{\alpha}_{3}}{\sqrt{\mu}}\|_{\sigma}
\\
&\leq \eta\epsilon^{a-1}\|\partial^{\alpha}f\|_{\sigma}^{2}
+C_{\eta}\epsilon^{a-1}\epsilon^{\frac{7}{5}+\frac{1}{15}a}(\delta+\epsilon^{a}\tau)^{-\frac{4}{3}}
+C_{\eta}\epsilon^{a-1}k^{\frac{1}{12}}\epsilon^{\frac{3}{5}-\frac{2}{5}a}\mathcal{D}_{2}(\tau),
\end{align*}
where we have used $|\langle v\rangle^{m}\mu^{-\frac{1}{2}}M|^{2}_{\sigma}\leq C$
for any $m\geq0$  by \eqref{1.26}. Owing to these, we thereby obtain
\begin{align*}
\epsilon^{a-1}|(\mathcal{L}\partial^{\alpha}f,\frac{\partial^{\alpha}M}{\sqrt{\mu}})|
&\leq  C(\eta+\eta_{0})\epsilon^{a-1}\|\partial^{\alpha}f\|_{\sigma}^{2}
+C\eta_{0}\epsilon^{a-1}\|\partial^{\alpha}(\widetilde{\rho},\widetilde{u},\widetilde{\theta})\|^{2}
\nonumber\\
&\quad+C_{\eta}\epsilon^{a-1}\epsilon^{\frac{7}{5}+\frac{1}{15}a}(\delta+\epsilon^{a}\tau)^{-\frac{4}{3}}
+C_{\eta}\epsilon^{a-1}k^{\frac{1}{12}}\epsilon^{\frac{3}{5}-\frac{2}{5}a}\mathcal{D}_{2}(\tau).
\end{align*}
As a consequence, the second term on the left-hand side of \eqref{3.70} is controlled by
\begin{align*}
\epsilon^{a-1}(\mathcal{L}\partial^{\alpha}f,\frac{\partial^{\alpha}F}{\sqrt{\mu}})
\leq&-\sigma_{1}\epsilon^{a-1}\|\partial^{\alpha}f\|_{\sigma}^{2}+
C(\eta+\eta_{0})\epsilon^{a-1}\|\partial^{\alpha}f\|_{\sigma}^{2}
+C\eta_{0}\epsilon^{a-1}\|\partial^{\alpha}(\widetilde{\rho},\widetilde{u},\widetilde{\theta})\|^{2}
\nonumber\\
&+C_{\eta}\epsilon^{a-1}\epsilon^{\frac{7}{5}+\frac{1}{15}a}(\delta+\epsilon^{a}\tau)^{-\frac{4}{3}}
+C_{\eta}\epsilon^{a-1}k^{\frac{1}{12}}\epsilon^{\frac{3}{5}-\frac{2}{5}a}\mathcal{D}_{2}(\tau).
\end{align*}
For $\bar{\alpha}=2$, recalling $F=M+\overline{G}+\sqrt{\mu}f$,
we get from \eqref{5.26}, \eqref{3.72}, the Sobolev imbedding theorem, Lemma \ref{lem5.3}, \eqref{3.3} and \eqref{3.4} that
\begin{align}
\label{3.75}
\|\frac{\partial^{\bar{\alpha}}F}{\sqrt{\mu}}\|_{\sigma}^{2}
&\leq \|\frac{\partial^{\bar{\alpha}}\sqrt{\mu}f}{\sqrt{\mu}}\|_{\sigma}^{2}
+\|\frac{\partial^{\bar{\alpha}}\overline{G}}{\sqrt{\mu}}\|_{\sigma}^{2}
+\|\frac{\partial^{\bar{\alpha}}M}{\sqrt{\mu}}\|_{\sigma}^{2}
\nonumber\\
&\leq C(\|\partial^{\bar{\alpha}}f\|^{2}_{\sigma}
+\|\partial^{\bar{\alpha}}(\widetilde{\rho},\widetilde{u},\widetilde{\theta})\|^{2})+C\epsilon^{\frac{7}{5}+\frac{1}{15}a}(\delta+\epsilon^{a}\tau)^{-\frac{4}{3}}
+Ck^{\frac{1}{12}}\epsilon^{\frac{3}{5}-\frac{2}{5}a}\mathcal{D}_{2}(\tau).
\end{align}
For the first term on the right-hand side of \eqref{3.70}, we directly apply \eqref{5.7} to get
\begin{equation}
\label{3.76}
\epsilon^{a-1}|(\partial^{\alpha}\Gamma(\frac{M-\mu}{\sqrt{\mu}},f),\frac{\partial^{\alpha}F}{\sqrt{\mu}})|\leq C\epsilon^{a-1}\sum_{|\alpha_{1}|\leq|\alpha|}
\int_{\mathbb{R}}|\partial^{\alpha_{1}}(\frac{M-\mu}{\sqrt{\mu}})|_{2}|\partial^{\alpha-\alpha_{1}}f|_{\sigma}|\frac{\partial^{\alpha}F}{\sqrt{\mu}}|_{\sigma}dy.
\end{equation}
For $|\alpha_{1}|=0$ in \eqref{3.76}, we use \eqref{5.12} and \eqref{3.75} to obtain
\begin{align}
\label{3.77}
&\epsilon^{a-1}\int_{\mathbb{R}}|\partial^{\alpha_{1}}(\frac{M-\mu}{\sqrt{\mu}})|_{2}
|\partial^{\alpha-\alpha_{1}}f|_{\sigma}|\frac{\partial^{\alpha}F}{\sqrt{\mu}}|_{\sigma}dy
\leq C\eta_{0}\epsilon^{a-1}\|\partial^{\alpha}f\|_{\sigma}\|\frac{\partial^{\alpha}F}{\sqrt{\mu}}\|_{\sigma}
\nonumber\\
&\leq C\eta_{0}\epsilon^{a-1}(\|\partial^{\alpha}f\|^{2}_{\sigma}
+\|\partial^{\alpha}(\widetilde{\rho},\widetilde{u},\widetilde{\theta})\|^{2})\notag\\
&\quad+C\eta_{0}\epsilon^{a-1}\epsilon^{\frac{7}{5}+\frac{1}{15}a}(\delta+\epsilon^{a}\tau)^{-\frac{4}{3}}
+C\eta_{0}\epsilon^{a-1}k^{\frac{1}{12}}\epsilon^{\frac{3}{5}-\frac{2}{5}a}\mathcal{D}_{2}(\tau).
\end{align}
For $1\leq |\alpha_{1}|\leq |\alpha|$ in \eqref{3.76}, it holds that
\begin{align}
\label{3.78}
&\epsilon^{a-1}\int_{\mathbb{R}}|\partial^{\alpha_{1}}(\frac{M-\mu}{\sqrt{\mu}})|_{2}
|\partial^{\alpha-\alpha_{1}}f|_{\sigma}|\frac{\partial^{\alpha}F}{\sqrt{\mu}}|_{\sigma}dy
\nonumber\\
&\leq C\epsilon^{a-1}\big\{\|\partial^{\alpha_{1}}(\rho,u,\theta)\|
+\sum_{|\alpha'|=1}\|\partial^{\alpha'}(\rho,u,\theta)\|^{|\alpha_{1}|}\big\}
\sup_{y\in\mathbb{R}}|\partial^{\alpha-\alpha_{1}}f|_{\sigma}
\|\frac{\partial^{\alpha}F}{\sqrt{\mu}}\|_{\sigma}
\nonumber\\
&\leq \eta\epsilon^{a-1}(\|\partial^{\alpha}f\|^{2}_{\sigma}
+\|\partial^{\alpha}(\widetilde{\rho},\widetilde{u},\widetilde{\theta})\|^{2})
+C_{\eta}\epsilon^{a-1}\epsilon^{\frac{7}{5}+\frac{1}{15}a}(\delta+\epsilon^{a}\tau)^{-\frac{4}{3}}
+C_{\eta}\epsilon^{a-1}k^{\frac{1}{12}}\epsilon^{\frac{3}{5}-\frac{2}{5}a}\mathcal{D}_{2}(\tau).
\end{align}
The term $\epsilon^{a-1}(\partial^{\alpha}\Gamma(f,\frac{M-\mu}{\sqrt{\mu}}),\frac{\partial^{\alpha}F}{\sqrt{\mu}})$
can be treated in the similar way as in \eqref{3.76}. We thus get from \eqref{3.77} and \eqref{3.78} that
\begin{align*}
&\epsilon^{a-1}|(\partial^{\alpha}\Gamma(f,\frac{M-\mu}{\sqrt{\mu}})+
\partial^{\alpha}\Gamma(\frac{M-\mu}{\sqrt{\mu}},f),\frac{\partial^{\alpha}F}{\sqrt{\mu}})|
\nonumber\\
&\leq C(\eta+\eta_{0})\epsilon^{a-1}(\|\partial^{\alpha}f\|^{2}_{\sigma}
+\|\partial^{\alpha}(\widetilde{\rho},\widetilde{u},\widetilde{\theta})\|^{2})
\nonumber\\
&\quad+C_{\eta}\epsilon^{a-1}\epsilon^{\frac{7}{5}+\frac{1}{15}a}(\delta+\epsilon^{a}\tau)^{-\frac{4}{3}}
+C_{\eta}\epsilon^{a-1}k^{\frac{1}{12}}\epsilon^{\frac{3}{5}-\frac{2}{5}a}\mathcal{D}_{2}(\tau).
\end{align*}
Due to $G=\overline{G}+\sqrt{\mu}f$, one has
\begin{equation*}
\Gamma(\frac{G}{\sqrt{\mu}},\frac{G}{\sqrt{\mu}})=\Gamma(\frac{\overline{G}}{\sqrt{\mu}},\frac{\overline{G}}{\sqrt{\mu}})
+\Gamma(\frac{\overline{G}}{\sqrt{\mu}},f)+\Gamma(f,\frac{\overline{G}}{\sqrt{\mu}})
+\Gamma(f,f).
\end{equation*}
For the second term on the right-hand side of \eqref{3.70},
we apply \eqref{3.75} and perform the similar method as \eqref{5.27} and \eqref{5.28} to obtain
\begin{align}
\label{3.80}
\epsilon^{a-1}|(\partial^{\alpha}\Gamma(\frac{\overline{G}}{\sqrt{\mu}},\frac{\overline{G}}{\sqrt{\mu}}),\frac{\partial^{\alpha}F}{\sqrt{\mu}})|
&\leq \eta\epsilon^{a-1}(\|\partial^{\alpha}f\|^{2}_{\sigma}
+\|\partial^{\alpha}(\widetilde{\rho},\widetilde{u},\widetilde{\theta})\|^{2})
\nonumber\\
&\quad +C_{\eta}\epsilon^{a-1}\epsilon^{\frac{7}{5}+\frac{1}{15}a}(\delta+\epsilon^{a}\tau)^{-\frac{4}{3}}
+C_{\eta}\epsilon^{a-1}k^{\frac{1}{12}}\epsilon^{\frac{3}{5}-\frac{2}{5}a}\mathcal{D}_{2}(\tau).
\end{align}
Following the similar method used as \eqref{5.29}, we have by using \eqref{3.75} that
\begin{align*}
&\epsilon^{a-1}|(\partial^{\alpha}\Gamma(\frac{\overline{G}}{\sqrt{\mu}},f),\frac{\partial^{\alpha}F}{\sqrt{\mu}})|
+\epsilon^{a-1}|(\partial^{\alpha}\Gamma(f,\frac{\overline{G}}{\sqrt{\mu}}),\frac{\partial^{\alpha}F}{\sqrt{\mu}})|
\nonumber\\
&\leq C\eta\epsilon^{a-1}(\|\partial^{\alpha}f\|^{2}_{\sigma}
+\|\partial^{\alpha}(\widetilde{\rho},\widetilde{u},\widetilde{\theta})\|^{2})
+C_{\eta}\epsilon^{a-1}\epsilon^{\frac{7}{5}+\frac{1}{15}a}(\delta+\epsilon^{a}\tau)^{-\frac{4}{3}}
+C_{\eta}\epsilon^{a-1}k^{\frac{1}{12}}\epsilon^{\frac{3}{5}-\frac{2}{5}a}\mathcal{D}_{2}(\tau).
\end{align*}
With \eqref{5.7} and the Sobolev imbedding theorem in hand, we get from \eqref{3.3}, \eqref{3.4} and \eqref{3.75}
as well as the Cauchy-Schwarz inequality that
\begin{align}
\label{3.82}
\epsilon^{a-1}|(\partial^{\alpha}\Gamma(f,f),\frac{\partial^{\alpha}F}{\sqrt{\mu}})|
&\leq C\eta\epsilon^{a-1}(\|\partial^{\alpha}f\|^{2}_{\sigma}
+\|\partial^{\alpha}(\widetilde{\rho},\widetilde{u},\widetilde{\theta})\|^{2})
\nonumber\\
&\quad+C_{\eta}\epsilon^{a-1}\epsilon^{\frac{7}{5}+\frac{1}{15}a}(\delta+\epsilon^{a}\tau)^{-\frac{4}{3}}
+C_{\eta}\epsilon^{a-1}k^{\frac{1}{12}}\epsilon^{\frac{3}{5}-\frac{2}{5}a}\mathcal{D}_{2}(\tau).
\end{align}
From \eqref{3.80} to \eqref{3.82}, we can conclude that
\begin{align*}
\epsilon^{a-1}(\partial^{\alpha}\Gamma(\frac{G}{\sqrt{\mu}},\frac{G}{\sqrt{\mu}}),\frac{\partial^{\alpha}F}{\sqrt{\mu}})
&\leq C\eta\epsilon^{a-1}(\|\partial^{\alpha}f\|^{2}_{\sigma}
+\|\partial^{\alpha}(\widetilde{\rho},\widetilde{u},\widetilde{\theta})\|^{2})
\nonumber\\
&\quad+C_{\eta}\epsilon^{a-1}\epsilon^{\frac{7}{5}+\frac{1}{15}a}(\delta+\epsilon^{a}\tau)^{-\frac{4}{3}}
+C_{\eta}\epsilon^{a-1}k^{\frac{1}{12}}\epsilon^{\frac{3}{5}-\frac{2}{5}a}\mathcal{D}_{2}(\tau).
\end{align*}
For the last term on the right-hand side of \eqref{3.70}, one has from \eqref{2.17} and \eqref{3.75} that
\begin{align*}
&|(\frac{1}{\sqrt{\mu}}\partial^{\alpha}P_{1}v_{1}M\big\{\frac{|v-u|^{2}
\overline{\theta}_{y}}{2R\theta^{2}}+\frac{(v-u)\cdot\bar{u}_{y}}{R\theta}\big\},\frac{\partial^{\alpha}F}{\sqrt{\mu}})|
\nonumber\\
&\leq C\|\langle v\rangle^{\frac{1}{2}}\frac{1}{\sqrt{\mu}}\partial^{\alpha}P_{1}v_{1}M\big\{\frac{|v-u|^{2}
\overline{\theta}_{y}}{2R\theta^{2}}+\frac{(v-u)\cdot\bar{u}_{y}}{R\theta}\big\}\|
\|\langle v\rangle^{-\frac{1}{2}}\frac{\partial^{\alpha}F}{\sqrt{\mu}}\|
\nonumber\\
&\leq C\eta\epsilon^{a-1}(\|\partial^{\alpha}f\|^{2}_{\sigma}
+\|\partial^{\alpha}(\widetilde{\rho},\widetilde{u},\widetilde{\theta})\|^{2})
+C_{\eta}\epsilon^{a-1}\epsilon^{\frac{7}{5}+\frac{1}{15}a}(\delta+\epsilon^{a}\tau)^{-\frac{4}{3}}
+C_{\eta}\epsilon^{a-1}k^{\frac{1}{12}}\epsilon^{\frac{3}{5}-\frac{2}{5}a}\mathcal{D}_{2}(\tau).
\end{align*}
Hence, it holds by those above estimates and for any $\eta>0$ and  $\eta_{0}>0$ small enough that
\begin{align}
\label{3.84}
&\frac{1}{2}\frac{d}{d\tau}\sum_{|\alpha|=2}\|\frac{\partial^{\alpha}F}{\sqrt{\mu}}\|^{2}
+c\epsilon^{a-1}\sum_{|\alpha|=2}\|\partial^{\alpha}f\|_{\sigma}^{2}\notag\\
&\leq C(\eta+\eta_{0})\epsilon^{a-1}\sum_{|\alpha|=2}\|\partial^{\alpha}(\widetilde{\rho},\widetilde{u},\widetilde{\theta})\|^{2}+C_{\eta}\epsilon^{a-1}\epsilon^{\frac{7}{5}+\frac{1}{15}a}(\delta+\epsilon^{a}\tau)^{-\frac{4}{3}}
+C_{\eta}\epsilon^{a-1}k^{\frac{1}{12}}\epsilon^{\frac{3}{5}-\frac{2}{5}a}\mathcal{D}_{2}(\tau).
\end{align}
Integrating \eqref{3.84} with respect to $\tau$  and then multiplying the resulting equation by $\epsilon^{2(1-a)}$
with $a\in[\frac{2}{3},1]$, we can obtain
\begin{align}
\label{3.85}
&\epsilon^{2(1-a)}\sum_{|\alpha|=2}(\|\partial^{\alpha}(\widetilde{\rho},\widetilde{u},\widetilde{\theta})\|^{2}+\|\partial^{\alpha}f\|^{2})
+\epsilon^{1-a}\sum_{|\alpha|=2}\int^{t}_{0}\|\partial^{\alpha}f\|_{\sigma}^{2}ds
\nonumber\\
&\leq C(\eta+\eta_{0})\epsilon^{1-a}\sum_{|\alpha|=2}\int^{\tau}_{0}\|\partial^{\alpha}(\widetilde{\rho},\widetilde{u},\widetilde{\theta})\|^{2}ds
+C_{\eta}k^{\frac{1}{3}}\epsilon^{\frac{6}{5}-\frac{4}{5}a}
+C_{\eta}k^{\frac{1}{12}}\epsilon^{\frac{3}{5}-\frac{2}{5}a}\int^{\tau}_{0}\mathcal{D}_{2}(\tau)ds.
\end{align}
Here we used $F=M+\overline{G}+\sqrt{\mu}f$, \eqref{3.2}, \eqref{3.72}, \eqref{3.3}, \eqref{3.4} and the Sobolev imbedding theorem to get
\begin{align*}
\epsilon^{2(1-a)}\sum_{|\alpha|=2}\|\frac{\partial^{\alpha}F(0,y,v)}{\sqrt{\mu}}\|^{2}
\leq Ck^{\frac{1}{3}}\epsilon^{\frac{6}{5}-\frac{4}{5}a}
\end{align*}
and
\begin{align*}
\epsilon^{2(1-a)}\sum_{|\alpha|=2}\|\frac{\partial^{\alpha}F(\tau,y,v)}{\sqrt{\mu}}\|^{2}
\geq c\epsilon^{2(1-a)}(\|\partial^{\alpha}(\widetilde{\rho},\widetilde{u},\widetilde{\theta})\|^{2}
+\|\partial^{\alpha}f\|^{2})
-Ck^{\frac{1}{3}}\epsilon^{\frac{6}{5}-\frac{4}{5}a}.
\end{align*}
By a suitable linear combination of \eqref{3.68} and \eqref{3.85}, we have by choosing $\eta$ and $\eta_{0}$ small enough that
\begin{align}
\label{3.88}
&\sum_{|\alpha|=1}(\|\partial^{\alpha}(\widetilde{\rho},\widetilde{u},\widetilde{\theta})\|^{2}+\|\partial^{\alpha}f\|^{2})
+\epsilon^{2(1-a)}\sum_{|\alpha|=2}(\|\partial^{\alpha}(\widetilde{\rho},\widetilde{u},\widetilde{\theta})\|^{2}+\|\partial^{\alpha}f\|^{2})
\nonumber\\
&\quad+\epsilon^{1-a}\sum_{|\alpha|=2}\int^{\tau}_{0}(\|\partial^{\alpha}(\widetilde{\rho},\widetilde{u},
\widetilde{\theta})\|^{2}+\|\partial^{\alpha}f\|_{\sigma}^{2})ds+\epsilon^{a-1}\sum_{|\alpha|=1}\int^{\tau}_{0}\|\partial^{\alpha}f\|_{\sigma}^{2}ds
\nonumber\\
&\leq Ck^{\frac{1}{3}}\epsilon^{\frac{6}{5}-\frac{4}{5}a}
+C(\eta_{0}+k^{\frac{1}{12}}\epsilon^{\frac{3}{5}-\frac{2}{5}a}+k^{\frac{1}{12}}\epsilon^{\frac{3}{5}a-\frac{2}{5}})\int^{\tau}_{0}\mathcal{D}_{2}(s)ds.
\end{align}
This completes the proof of the high order energy estimates.
\subsection{Weighted energy estimates}\label{sec3.3}
In this subsection, we will derive the weighted mixed derivative estimates of the function $f$
in order to close a priori estimates. To this end, by taking the derivative $\partial_{\beta}^{\alpha}$ to \eqref{2.11}
with $|\alpha|+|\beta|\leq 2$ and $|\beta|\geq1$, for $e_{1}=(1,0,0)$, one has
\begin{align}
\label{3.89}
&\partial_{\tau}\partial^{\alpha}_{\beta}f+v_{1}\partial_{y}\partial^{\alpha}_{\beta}f
+C^{\beta-e_{1}}_{\beta}\partial_{y}\partial_{\beta-e_{1}}^{\alpha}f
-\epsilon^{a-1}\partial^{\alpha}_{\beta}\mathcal{L}f
\nonumber\\
&=\epsilon^{a-1}\partial^{\alpha}_{\beta}\Gamma(f,\frac{M-\mu}{\sqrt{\mu}})+
\epsilon^{a-1}\partial^{\alpha}_{\beta}\Gamma(\frac{M-\mu}{\sqrt{\mu}},f)
+\epsilon^{a-1}\partial^{\alpha}_{\beta}\Gamma(\frac{G}{\sqrt{\mu}},\frac{G}{\sqrt{\mu}})
+\partial^{\alpha}_{\beta}\big\{\frac{P_{0}(v_{1}\sqrt{\mu}\partial_{y}f)}{\sqrt{\mu}}\big\}
\nonumber\\
&\quad-\partial^{\alpha}_{\beta}\big\{\frac{1}{\sqrt{\mu}}P_{1}v_{1}M(\frac{|v-u|^{2}
\widetilde{\theta}_{y}}{2R\theta^{2}}+\frac{(v-u)\cdot\widetilde{u}_{y}}{R\theta})\big\}
-\partial^{\alpha}_{\beta}\big\{\frac{P_{1}(v_{1}\partial_{y}\overline{G})}{\sqrt{\mu}}\big\}
-\partial^{\alpha}_{\beta}\big\{\frac{\partial_{\tau}\overline{G}}{\sqrt{\mu}}\big\}.
\end{align}
We take the inner product of \eqref{3.89} with $w^{2|\beta|}\partial^{\alpha}_{\beta}f$ over $\mathbb{R}_{y}\times\mathbb{R}_{v}^{3}$
and estimate each term. First of all, by integration by parts, we obtain
\begin{align*}
(\partial_{\tau}&\partial^{\alpha}_{\beta}f+v_{1}\partial_{y}\partial^{\alpha}_{\beta}f,w^{2|\beta|}\partial_{\beta}^{\alpha}f)
=\frac{1}{2}\frac{d}{d\tau}\|\partial_{\beta}^{\alpha}f\|^{2}_{2,|\beta|}.
\end{align*}
From the H\"older inequality and Cauchy inequality, it follows that
\begin{align*}
|(\partial_{y}\partial_{\beta-e_{1}}^{\alpha}f,w^{2|\beta|}\partial_{\beta}^{\alpha}f)|
&\leq C\|w^{\frac{1}{2}+(|\beta|-1)}
\partial_{y}\partial_{\beta-e_{1}}^{\alpha}f\|
\|w^{|\beta|+\frac{1}{2}}\partial^{\alpha}_{\beta}f\|
\nonumber\\
&= C\|w^{\frac{1}{2}}w^{|\beta-e_{1}|}
\partial_{y}\partial_{\beta-e_{1}}^{\alpha}f\|
\|w^{\frac{1}{2}}w^{|\beta|}\partial^{\alpha}_{\beta}f\|
\nonumber\\
&\leq \eta\epsilon^{a-1}\|\partial^{\alpha}_{\beta}f\|_{\sigma,|\beta|}^{2}
+ C_{\eta}\epsilon^{1-a}\|\partial_{y}\partial_{\beta-e_{1}}^{\alpha}f\|_{\sigma,|\beta-e_{1}|}^{2}.
\end{align*}
Here  we have used the fact that $|\beta-e_{1}|=|\beta|-1$ and
$\|w^{\frac{1}{2}}w^{|\beta|}\partial^{\alpha}_{\beta}f\|\leq C\|\partial^{\alpha}_{\beta}f\|_{\sigma,|\beta|}$
for $w=\langle v\rangle^{\gamma+2}$ by \eqref{2.17}.
Due to \eqref{5.6}, we can see that
\begin{align*}
-\epsilon^{a-1}(\partial_{\beta}^{\alpha}\mathcal{L}f,w^{2|\beta|}\partial_{\beta}^{\alpha}f)\geq
\epsilon^{a-1}\|\partial^{\alpha}_{\beta}f\|^{2}_{\sigma,|\beta|}-\eta\epsilon^{a-1}\sum_{|\beta_{1}|\leq|\beta|}
\|\partial^{\alpha}_{\beta_{1}}f\|_{\sigma,|\beta_{1}|}^{2}-C_{\eta}\epsilon^{a-1}\|\partial^{\alpha}f\|_{\sigma}^{2}.
\end{align*}
With \eqref{5.9} and \eqref{5.19} in hand, one can show that
\begin{align*}
\epsilon^{a-1}|&(\partial^{\alpha}_{\beta}\Gamma(\frac{M-\mu}{\sqrt{\mu}},f),
w^{2|\beta|}\partial^{\alpha}_{\beta}f)|+\epsilon^{a-1}|(\partial^{\alpha}_{\beta}\Gamma(f,\frac{M-\mu}{\sqrt{\mu}}),
w^{2|\beta|}\partial^{\alpha}_{\beta}f)|
\nonumber\\
&\leq C\eta\epsilon^{a-1}\|\partial^{\alpha}_{\beta}f\|^{2}_{\sigma,|\beta|}
+C_{\eta}(\eta_{0}+k^{\frac{1}{12}}\epsilon^{\frac{3}{5}-\frac{2}{5}a})\mathcal{D}_{2}(\tau),
\end{align*}
and
\begin{align*}
&\epsilon^{a-1}|(\partial^{\alpha}_{\beta}\Gamma(\frac{G}{\sqrt{\mu}},\frac{G}{\sqrt{\mu}}),
w^{2|\beta|}\partial^{\alpha}_{\beta}f)|\\
&\leq  C\eta\epsilon^{a-1}\|\partial^{\alpha}_{\beta}f\|^{2}_{\sigma,|\beta|}
+C_{\eta}\epsilon^{\frac{7}{5}+\frac{1}{15}a}(\delta+\epsilon^{a}\tau)^{-\frac{4}{3}}
+C_{\eta}k^{\frac{1}{12}}\epsilon^{\frac{3}{5}-\frac{2}{5}a}\mathcal{D}_{2}(\tau).
\end{align*}
By using \eqref{1.9}, \eqref{1.26}, \eqref{2.17}, the Sobolev imbedding theorem, Lemma \ref{lem5.3}, \eqref{3.3} and \eqref{3.4}, we can obtain
\begin{align*}
&\big|(\partial_{\beta}^{\alpha}(\frac{P_{0}(v_{1}\sqrt{\mu}\partial_{y}f)}{\sqrt{\mu}})
,w^{2|\beta|}\partial^{\alpha}_{\beta}f)\big|=\big|\sum_{j=0}^{4}(\langle v\rangle^{\frac{1}{2}}w^{|\beta|}\partial_{\beta}^{\alpha}
(\langle v_{1}\sqrt{\mu}\partial_{y}f,\frac{\chi_{j}}{M}\rangle\frac{\chi_{j}}{\sqrt{\mu}})
,\langle v\rangle^{-\frac{1}{2}}w^{|\beta|}\partial^{\alpha}_{\beta}f)\big|
\\
&\leq \eta\epsilon^{a-1}\|\partial^{\alpha}_{\beta}f\|_{\sigma,|\beta|}^{2}+C_{\eta}\epsilon^{1-a}\|\partial^{\alpha}\partial_{y}f\|_{\sigma}^{2}
+C_{\eta}\epsilon^{\frac{7}{5}+\frac{1}{15}a}(\delta+\epsilon^{a}\tau)^{-\frac{4}{3}}
+C_{\eta}k^{\frac{1}{12}}\epsilon^{\frac{3}{5}-\frac{2}{5}a}\mathcal{D}_{2}(\tau),
\end{align*}
and
\begin{align*}
&\big|(\partial_{\beta}^{\alpha}(\frac{1}{\sqrt{\mu}}P_{1}v_{1}M\big\{\frac{|v-u|^{2}
\widetilde{\theta}_{y}}{2R\theta^{2}}+\frac{(v-u)\cdot\widetilde{u}_{y}}{R\theta}\big\})
,w^{2|\beta|}\partial^{\alpha}_{\beta}f)\big|
\\
&\leq C\|\langle v\rangle^{\frac{1}{2}}w^{|\beta|}\partial_{\beta}^{\alpha}\big\{\frac{1}{\sqrt{\mu}}P_{1}v_{1}M(\frac{|v-u|^{2}
\widetilde{\theta}_{y}}{2R\theta^{2}}+\frac{(v-u)\cdot\widetilde{u}_{y}}{R\theta})\big\}\|
\|\langle v\rangle^{-\frac{1}{2}}w^{|\beta|}\partial^{\alpha}_{\beta}f\|
\\
&\leq \eta\epsilon^{a-1}\|\partial^{\alpha}_{\beta}f\|_{\sigma,|\beta|}^{2}
+C_{\eta}\epsilon^{1-a}\|\partial^{\alpha}(\widetilde{u}_{y},\widetilde{\theta}_{y})\|^{2}
+C_{\eta}\epsilon^{\frac{7}{5}+\frac{1}{15}a}(\delta+\epsilon^{a}\tau)^{-\frac{4}{3}}
+C_{\eta}k^{\frac{1}{12}}\epsilon^{\frac{3}{5}-\frac{2}{5}a}\mathcal{D}_{2}(\tau),
\end{align*}
where we have used the fact that $|\langle v\rangle^{m}\mu^{-\frac{1}{2}}\partial_{\beta}M|_{2}\leq C$ for any $m\geq0$ and $\beta\geq0$
by \eqref{1.26}.
\\
Notice that the last two terms of \eqref{3.89} can be dominated by
\begin{align*}
&\big|\big(\partial_{\beta}^{\alpha}(\frac{P_{1}(v_{1}\partial_{y}\overline{G})}{\sqrt{\mu}})
-\partial_{\beta}^{\alpha}(\frac{\partial_{\tau}\overline{G}}{\sqrt{\mu}}),w^{2|\beta|}\partial^{\alpha}_{\beta}f\big)\big|
\nonumber\\
&\leq C\big(\|\langle v\rangle^{\frac{1}{2}}w^{|\beta|}\partial_{\beta}^{\alpha}(\frac{P_{1}(v_{1}\partial_{y}\overline{G})}{\sqrt{\mu}})\|
+\|\langle v\rangle^{\frac{1}{2}}w^{|\beta|}\partial_{\beta}^{\alpha}(\frac{\partial_{\tau}\overline{G}}{\sqrt{\mu}})\|\big)
\|\langle v\rangle^{-\frac{1}{2}}w^{|\beta|}\partial^{\alpha}_{\beta}f\|
\nonumber\\
&\leq\eta\epsilon^{a-1}\|\partial^{\alpha}_{\beta}f\|_{\sigma,|\beta|}^{2}
+C_{\eta}\epsilon^{\frac{7}{5}+\frac{1}{15}a}(\delta+\epsilon^{a}\tau)^{-\frac{4}{3}}
+C_{\eta}k^{\frac{1}{12}}\epsilon^{\frac{3}{5}-\frac{2}{5}a}\mathcal{D}_{2}(\tau),
\end{align*}
according to \eqref{1.9}, \eqref{5.26}, \eqref{2.17}, Lemma \ref{lem5.3} and the elementary inequalities. Hence, for $|\alpha|+|\beta|\leq 2$ and $|\beta|\geq1$, we have by the above related estimates and $\eta>0$ small enough that
\begin{align}
\label{3.90}
&\frac{1}{2}\frac{d}{d\tau}\|\partial_{\beta}^{\alpha}f\|^{2}_{2,|\beta|}
+c\epsilon^{a-1}\|\partial^{\alpha}_{\beta}f\|^{2}_{\sigma,|\beta|}\notag\\
&\leq C\epsilon^{a-1}\|\partial^{\alpha}f\|_{\sigma}^{2}
+C\epsilon^{1-a}\big\{\|\partial^{\alpha}\partial_{y}f\|_{\sigma}^{2}+\|\partial^{\alpha}(\widetilde{u}_{y},\widetilde{\theta}_{y})\|^{2}\big\}
\nonumber\\
&\quad+C\epsilon^{1-a}\|\partial_{y}\partial_{\beta-e_{1}}^{\alpha}f\|_{\sigma,|\beta-e_{1}|}^{2}
+C\epsilon^{a-1}\sum_{|\beta_{1}|<|\beta|}\|\partial^{\alpha}_{\beta_{1}}f\|_{\sigma,|\beta_{1}|}^{2}
\nonumber\\
&\quad+C\epsilon^{\frac{7}{5}+\frac{1}{15}a}(\delta+\epsilon^{a}\tau)^{-\frac{4}{3}}
+C(\eta_{0}+k^{\frac{1}{12}}\epsilon^{\frac{3}{5}-\frac{2}{5}a})\mathcal{D}_{2}(\tau).
\end{align}
\par
Notice that the coefficients on the third line of \eqref{3.90} is large and $|\beta_{1}|<|\beta|$. We will use
the induction in $|\beta|$  to control this term. By the suitable linear combinations, we can obtain
\begin{align}
\label{3.91}
&\sum_{|\alpha|=0}^{1}C_{|\alpha|}\sum_{j=1}^{2-|\alpha|}C_j\sum_{|\beta|=j}\frac{d}{d\tau}\|\partial_{\beta}^{\alpha}f\|_{2,|\beta|}^{2}
+c\epsilon^{a-1}\sum_{|\alpha|+|\beta|\leq 2,|\beta|\geq1}\|\partial_{\beta}^{\alpha}f\|_{\sigma,|\beta|}^{2}
\nonumber\\
&\leq C\epsilon^{a-1}\sum_{|\alpha|\leq 1}\|\partial^{\alpha}f\|_{\sigma}^{2}+C\epsilon^{1-a}\sum_{1\leq|\alpha|\leq 2}\big\{\|\partial^{\alpha}f\|_{\sigma}^{2}+\|\partial^{\alpha}(\widetilde{\rho},\widetilde{u},\widetilde{\theta})\|^{2}\big\}
\nonumber\\
&\quad+C\epsilon^{\frac{7}{5}+\frac{1}{15}a}(\delta+\epsilon^{a}\tau)^{-\frac{4}{3}}
+C(\eta_{0}+k^{\frac{1}{12}}\epsilon^{\frac{3}{5}-\frac{2}{5}a})\mathcal{D}_{2}(\tau).
\end{align}
Here we have required that
\begin{align}
\label{3.92}
\epsilon^{a-1}\geq \epsilon^{1-a}, \quad \mbox{that is}\quad  a\leq 1.
\end{align}
Integrating \eqref{3.91} with respect to $\tau$ and using \eqref{3.2} with \eqref{3.3} gives
\begin{align}
\label{3.93}
&\sum_{|\alpha|+|\beta|\leq2,|\beta|\geq1}\big\{\|\partial_{\beta}^{\alpha}f\|_{2,|\beta|}^{2}
+\epsilon^{a-1}\int^{\tau}_{0}\|\partial_{\beta}^{\alpha}f\|_{\sigma,|\beta|}^{2}ds\big\}
\nonumber\\
&\leq C\epsilon^{a-1}\sum_{|\alpha|\leq1}\int^{\tau}_{0}\|\partial^{\alpha}f\|_{\sigma}^{2}ds
+C\epsilon^{1-a}\sum_{1\leq|\alpha|\leq 2}\int^{\tau}_{0}
\big\{\|\partial^{\alpha}f\|_{\sigma}^{2}+\|\partial^{\alpha}(\widetilde{\rho},\widetilde{u},\widetilde{\theta})\|^{2}\big\}ds
\nonumber\\
&\quad+Ck^{\frac{1}{3}}\epsilon^{\frac{6}{5}-\frac{4}{5}a}
+C(\eta_{0}+k^{\frac{1}{12}}\epsilon^{\frac{3}{5}-\frac{2}{5}a})\int^{\tau}_{0}\mathcal{D}_{2}(s)ds.
\end{align}
This completes the proof of the weighted derivative estimates of the function $f$.

\section{Stability and convergence rate}\label{sec.4}
Based on the energy estimates derived in Section \ref{sec.3}, in this section we are now in a position to complete the

\medskip
\noindent{\it Proof of Theorem \ref{thm1.1}:}
By a suitable linear combination of \eqref{3.41}, \eqref{3.88} and \eqref{3.93}, we can obtain
\begin{align}
\label{4.1}
&\mathcal{E}_{2}(\tau)+\int^{\tau}_{0}\|\sqrt{\bar{u}_{1y}}(\widetilde{\rho},\widetilde{u}_{1},\widetilde{\theta})\|^{2}ds
+\int^{\tau}_{0}\mathcal{D}_{2}(s)ds
\nonumber\\
&\hspace{1cm}\leq Ck^{\frac{1}{3}}\epsilon^{\frac{6}{5}-\frac{4}{5}a}
+C(\eta_{0}+k^{\frac{1}{12}}\epsilon^{\frac{3}{5}-\frac{2}{5}a}+k^{\frac{1}{12}}\epsilon^{\frac{3}{5}a-\frac{2}{5}})\int^{\tau}_{0}\mathcal{D}_{2}(s)ds.
\end{align}
Here $\mathcal{E}_{2}(\tau)$ and $\mathcal{D}_{2}(\tau)$ are defined by \eqref{2.18} and \eqref{2.19}, respectively. At the moment,  one has to require that the second term on the right hand side of \eqref{4.1} should be absorbed  by the left hand side. Thus, this leads us to impose
\begin{align}
\label{4.2}
\frac{3}{5}-\frac{2}{5}a\geq 0 \quad \mbox{and} \quad \frac{3}{5}a-\frac{2}{5}\geq0,
\quad \mbox{that is} \quad \frac{2}{3}\leq a\leq\frac{3}{2}.
\end{align}
Due to \eqref{3.92} and \eqref{4.2}, we need to require that
$\frac{2}{3}\leq a\leq 1$ for the choice of the parameter $a$ in the scaling transformation \eqref{2.1} which we start with. Hence, by using the smallness of $k>0$, $\eta_{0}>0$ and $\epsilon>0$,
we have from \eqref{4.1} that
\begin{align}
\label{4.3}
\mathcal{E}_{2}(\tau)+\int^{\tau}_{0}\|\sqrt{\bar{u}_{1y}}(\widetilde{\rho},\widetilde{u}_{1},\widetilde{\theta})\|^{2}ds
+\frac{1}{2}\int^{\tau}_{0}\mathcal{D}_{2}(s)ds\leq Ck^{\frac{1}{3}}\epsilon^{\frac{6}{5}-\frac{4}{5}a}<
\frac{1}{2}k^{\frac{1}{6}}\epsilon^{\frac{6}{5}-\frac{4}{5}a}.
\end{align}
Then \eqref{4.3} implies that  for $a\in[\frac{2}{3},1]$ and $\tau_{1}\in(0,+\infty)$, one has
\begin{equation}
\label{4.4}
\sup_{0\leq\tau\leq\tau_{1}}\mathcal{E}_{2}(\tau)< \frac{1}{2}k^{\frac{1}{6}}\epsilon^{\frac{6}{5}-\frac{4}{5}a},
\end{equation}
which is strictly stronger than \eqref{3.4}. Thus the a priori assumption \eqref{3.4} can be closed. Therefore, by the uniform a priori estimates  and the local existence of the solution,
the standard continuity argument gives the existence and uniqueness of global solutions
to the Landau equation \eqref{1.1} with initial data \eqref{3.1}. Moreover, the desired estimate \eqref{thm.enineq} holds true.

We are going to justify the convergence rate as in \eqref{1.27}. By \eqref{4.4}, \eqref{2.18} and the Sobolev imbedding theorem, we get
\begin{equation}
\label{4.5}
\sup_{0\leq\tau\leq+\infty}\{\|(\widetilde{\rho},\widetilde{u},\widetilde{\theta})(\tau,y)\|_{L_{y}^{\infty}}
+\|f(\tau,y,v)\|_{L^{\infty}_{y}L^{2}_{v}}\}\leq Ck^{\frac{1}{12}}\epsilon^{\frac{3}{5}-\frac{2}{5}a}.
\end{equation}
On the other hand, we have by using \eqref{5.24}, Lemma \ref{lem5.3} and  $\delta=\frac{1}{k}\epsilon^{\frac{3}{5}-\frac{2}{5}a}$ with $\frac{2}{3}\leq a\leq 1$ that
\begin{equation}
\label{4.6}
\sup_{0\leq\tau\leq+\infty}\|\frac{\overline{G}(\tau,y,v)}{\sqrt{\mu}}\|_{L_{y}^{\infty}L_{v}^{2}}
\leq C\epsilon^{1-a}\sup_{0\leq\tau\leq+\infty}(\|\bar{u}_{1y}\|_{L^{\infty}_{y}}+\|\bar{\theta}_{y}\|_{L^{\infty}_{y}})
\leq Ck^{\frac{1}{12}}\epsilon^{\frac{3}{5}-\frac{2}{5}a}.
\end{equation}
It follows from \eqref{4.5} and \eqref{4.6} that
\begin{align}
\label{4.7}
\sup_{0\leq\tau\leq+\infty}\|\frac{F-M_{[\bar{\rho},\bar{u},\bar{\theta}]}}{\sqrt{\mu}}\|_{L_{x}^{\infty}L_{v}^{2}}
&\leq C\sup_{0\leq\tau\leq+\infty}\Big\{\|\frac{M-M_{[\bar{\rho},\bar{u},\bar{\theta}]}}{\sqrt{\mu}}\|_{L_{y}^{\infty}L_{v}^{2}}
+\|f\|_{L_{y}^{\infty}L_{v}^{2}}+\|\frac{\overline{G}}{\sqrt{\mu}}\|_{L_{y}^{\infty}L_{v}^{2}}
\Big\}\notag\\
&\leq Ck^{\frac{1}{12}}\epsilon^{\frac{3}{5}-\frac{2}{5}a},
\end{align}
where we have used the facts that $F=M+\overline{G}+\sqrt{\mu}f$ and \eqref{1.26}.
By Lemma \ref{lem5.2} and $\delta=\frac{1}{k}\epsilon^{\frac{3}{5}-\frac{2}{5}a}$ with $k$ independent of $\epsilon$ satisfying $\epsilon\ll k$,
we have for $t>0$ that
\begin{align}
\label{4.8}
\|(\bar{\rho},\bar{u},\bar{\theta})(t,x)-(\rho^{R},u^{R},\theta^{R})(\frac{x}{t})\|_{L^{\infty}_{x}}\leq
C \frac{1}{k}t^{-1}\epsilon^{\frac{3}{5}-\frac{2}{5}a}\{\ln(1+t)+|\ln \epsilon|\}.
\end{align}
With \eqref{4.7} and \eqref{4.8} in hand,
for any given constant $l>0$ and all $t\in[l,+\infty)$,
there exists a constant $C_{l,k}>0$, independent of $\epsilon$,   such that
\begin{align*}
\|\frac{F(t,x,v)-M_{[\rho^{R},u^{R},\theta^{R}](x/t)}(v)}{\sqrt{\mu}}\|_{L_{x}^{\infty}L_{v}^{2}}
&\leq \|\frac{F-M_{[\bar{\rho},\bar{u},\bar{\theta}]}}{\sqrt{\mu}}\|_{L_{x}^{\infty}L_{v}^{2}}
+\|\frac{M_{[\bar{\rho},\bar{u},\bar{\theta}]}-M_{[\rho^{R},u^{R},\theta^{R}]}}{\sqrt{\mu}}\|_{L_{x}^{\infty}L_{v}^{2}}\notag\\
&\leq C_{l,k}\epsilon^{\frac{3}{5}-\frac{2}{5}a}|\ln\epsilon|.
\end{align*}
This gives \eqref{1.27} and then completes the proof of Theorem \ref{thm1.1}.\qed

\section{Appendix}\label{sec.5}
In this section, we will give some basic estimates, which have been used in the previous energy estimates.
We first list some properties for the rarefaction
wave defined by \eqref{1.21} and \eqref{1.23}. Then, we give some properties of the Burnett functions and the fast velocity decay of $\overline{G}$
to overcome the slow time decay of the term $\|(\bar{u}_{y},\bar{\theta}_{y})\|^{2}$ in the term
$P_{1}(v_{1}M_{y})$ in \eqref{2.10}. Lastly, we recall some basic properties of the collision operators and prove
some linear and nonlinear estimates in the previous energy analysis.
\par
We now give the properties of the solution $\overline{\omega}_{\delta}(t,x)$ to Burgers equation \eqref{1.21}
and the smooth approximate 3-rarefaction wave $(\bar{\rho},\bar{u},\bar{\theta})(t,x)$ constructed by \eqref{1.23}. Their proofs can be found in \cite{HL1,LiuXin,Xin}.

\begin{lemma}\label{lem5.1}
The Burgers equation \eqref{1.21} has a unique smooth global solution $\overline{\omega}_{\delta}(t,x)$ such that
\\
\mbox{(1)}~~$\omega_{-}<\overline{\omega}_{\delta}(t,x)<\omega_{+}$,\ \ $\partial_{x}\overline{\omega}_{\delta}(t,x)>0$,\quad
$\forall~ x\in\mathbb{R}$,~$t\geq 0$.
\\
\mbox{(2)}~~The following estimates hold for any $t>0$, $\delta>0$ and $p\in[1,+\infty]$
\begin{align*}
&\|\partial_{x}\overline{\omega}_{\delta }(t,x)\|_{L^{p}(\mathbb{R}_{x})}\leq C(\omega_{+}-\omega_{-})^{\frac{1}{p}}(\delta+t)^{-1+\frac{1}{p}},
\nonumber\\
&\|\partial^{j}_{x}\overline{\omega}_{\delta}(t,x)\|_{L^{p}(\mathbb{R}_{x})}\leq C\delta^{-j+1+\frac{1}{p}}(\delta+t)^{-1},\quad j\geq 2.
\end{align*}
\mbox{(3)}~~There exists a constant $\delta_{0}\in(0,1)$ such that for $\delta\in(0,\delta_{0})$ and $t>0$
$$
\|\overline{\omega}_{\delta}(t,x)-\omega^{R}(\frac{x}{t})\|_{L^{\infty}(\mathbb{R}_{x})}\leq C\delta t^{-1}\{\ln(1+t)+|\ln\delta|\}.
$$
\end{lemma}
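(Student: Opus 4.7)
The plan is to prove the three assertions by the classical method of characteristics, exploiting the fact that the initial datum $\overline{\omega}_\delta(x_0)$ is a smooth, strictly monotone increasing function whose derivative $w(x_0) := \overline{\omega}_\delta'(x_0) = \frac{\omega_+-\omega_-}{2\delta}\,\mathrm{sech}^2(x_0/\delta)$ is bounded by $C/\delta$ and has $L^1$ norm equal to $\omega_+-\omega_-$. For part (1), since the initial data is strictly increasing, the characteristic map $\Phi_t:x_0\mapsto x_0+t\,\overline{\omega}_\delta(x_0)$ is strictly increasing on $\mathbb{R}$ for every $t\geq 0$ (its derivative $1+tw(x_0)\geq 1$), hence a smooth bijection of $\mathbb{R}$ onto $\mathbb{R}$. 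Defining $\overline{\omega}_\delta(t,x):=\overline{\omega}_\delta(\Phi_t^{-1}(x))$ yields a unique global smooth solution; the bounds $\omega_-<\overline{\omega}_\delta(t,x)<\omega_+$ come from the range of $\tanh$, and implicit differentiation of $x=x_0+t\overline{\omega}_\delta(x_0)$ gives $\partial_x\overline{\omega}_\delta(t,x)=w(x_0)/(1+tw(x_0))>0$.

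For part (2), the key identity is $\partial_x\overline{\omega}_\delta(t,x)=w(x_0)/(1+tw(x_0))$ together with the Jacobian relation $dx=(1+tw(x_0))\,dx_0$. The $L^\infty$ bound follows from $w/(1+tw)\leq \min\{w,1/t\}\leq C/(\delta+t)$ (using $w\leq C/\delta$). The $L^1$ bound follows from the change of variables,
\begin{equation*}
\int_{\mathbb{R}}\frac{w(x_0)}{1+tw(x_0)}(1+tw(x_0))\,dx_0 = \int_{\mathbb{R}} w(x_0)\,dx_0 = \omega_+-\omega_-,
\end{equation*}
and for $1<p<\infty$ one interpolates: $\|\partial_x\overline{\omega}_\delta\|_{L^p}\leq\|\partial_x\overline{\omega}_\delta\|_{L^1}^{1/p}\|\partial_x\overline{\omega}_\delta\|_{L^\infty}^{1-1/p}$. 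For $j\geq 2$, induction in $j$ together with the formula $\partial_x=(1+tw(x_0))^{-1}\partial_{x_0}$ expresses $\partial_x^j\overline{\omega}_\delta$ as a rational combination of derivatives $w^{(k)}(x_0)$ (which obey $\|w^{(k)}\|_{L^p_{x_0}}\leq C\delta^{-k+1/p-1}$ by direct computation from $\mathrm{sech}^2$) divided by a suitable power of $(1+tw(x_0))$; combining the pointwise bound $(1+tw)^{-1}\leq C\delta/(\delta+t)$ at points where $w\sim 1/\delta$ with the change of variables yields the stated $\delta^{-j+1+1/p}(\delta+t)^{-1}$ estimate.

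For part (3), fix $(t,x)$ with $t>0$ and let $x_0=\Phi_t^{-1}(x)$. The argument divides into the rarefaction fan $\omega_-<x/t<\omega_+$, where $\omega^R(x/t)=x/t$, and the two constant regions. In the fan, $x=x_0+t\overline{\omega}_\delta(x_0)$ gives $x/t-\overline{\omega}_\delta(x_0)=x_0/t$, so one needs $|x_0|\leq C\delta\{\ln(1+t)+|\ln\delta|\}$. Since $\overline{\omega}_\delta(x_0)-\omega_\pm$ decays like $e^{-2|x_0|/\delta}$ at $\pm\infty$, the relation $x/t=\overline{\omega}_\delta(x_0)+x_0/t$ with $x/t$ in a bounded fan forces $|x_0|/\delta$ to be logarithmic in $t/\delta$, producing the stated bound. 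In the outer regions $x/t\leq\omega_-$ (respectively $\geq\omega_+$), a similar comparison using exponential closeness of $\overline{\omega}_\delta$ to $\omega_\pm$ combined with boundary matching at $x/t=\omega_\pm$ gives the same rate.

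The main obstacle will be part (3), specifically tracking the logarithmic loss $\ln(1+t)+|\ln\delta|$. The subtle point is that in the interior of the fan, $|x_0|$ is only logarithmically large relative to $\delta$ (reflecting the exponential tail of $\tanh$), and one must perform this inversion uniformly in $(t,x)$; naive estimates would give a polynomial rather than a logarithmic dependence on $t/\delta$. Parts (1)--(2) are essentially bookkeeping once the characteristic formula is in hand, whereas (3) requires the sharp comparison between a smoothed profile and its singular Riemann limit and is where the cited references \cite{HL1,LiuXin,Xin} carry out the careful case analysis sketched above.
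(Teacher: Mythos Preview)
Your proposal is correct and follows the standard characteristic-curve argument used in the references \cite{HL1,LiuXin,Xin}. Note that the paper itself does not prove Lemma~\ref{lem5.1}: it simply states the result and cites those references, so your sketch actually supplies more detail than the paper does. Your treatment of parts (1) and (2) is essentially complete; the interpolation for $1<p<\infty$ and the $L^1$ change-of-variables identity are exactly the right observations. For $j\geq 2$ your remark that the higher derivatives $w^{(k)}$ are concentrated where $w\sim 1/\delta$, and hence where $(1+tw)^{-1}\sim \delta/(\delta+t)$, is the key mechanism; a fully rigorous version would write out the Fa\`a di Bruno--type expansion and use $|w^{(k)}(x_0)|\leq C_k\delta^{-k-1}\mathrm{sech}^2(x_0/\delta)$ together with the Jacobian $dx=(1+tw)\,dx_0$, but the structure you describe is correct. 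Your outline of part (3) correctly identifies the logarithmic inversion of $\tanh$ as the source of the $|\ln\delta|$ term and the compensation between $x_0/t$ and the exponential tail as the source of $\ln(1+t)$; this is precisely the case analysis carried out in \cite{Xin}.
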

\begin{lemma}\label{lem5.2}
The smooth approximate 3-rarefaction wave $(\bar{\rho},\bar{u},\bar{\theta})(t,x)$ defined in \eqref{1.23} satisfying
\\
\mbox{(i)}~~$\bar{u}_{2}=\bar{u}_{3}=0$,\quad $\bar{u}_{1x}>0$, \mbox{and $\bar{\theta}_{x}=\sqrt{\frac{2}{5}}\bar{\theta}^{\frac{1}{2}}\bar{u}_{1x}$},\quad
$\forall ~x\in\mathbb{R}$,~$t\geq 0$.
\\
\mbox{(ii)}~~The following estimates hold for any $t> 0$, $\delta>0$ and $p\in[1,+\infty]$
\begin{align*}
&\|\partial_{x}(\bar{\rho},\bar{u}_{1},\bar{\theta})(t,x)\|_{L^{p}(\mathbb{R}_{x})}\leq C(\omega_{+}-\omega_{-})^{\frac{1}{p}}(\delta+t)^{-1+\frac{1}{p}},
\nonumber\\
&\|\partial_{x}^{j}(\bar{\rho},\bar{u}_{1},\bar{\theta})(t,x)\|_{L^{p}(\mathbb{R}_{x})}\leq C\delta^{-j+1+\frac{1}{p}}(\delta+t)^{-1},\quad j\geq 2.
\end{align*}
\mbox{(iii)}~~There exists a constant $\delta_{0}\in(0,1)$ such that for $\delta\in(0,\delta_{0})$ and $t>0$
$$
\|(\bar{\rho},\bar{u},\bar{\theta})(t,x)-(\rho^{R},u^{R},\theta^{R})(\frac{x}{t})\|_{L^{\infty}(\mathbb{R}_{x})}\leq C\delta t^{-1}\{\ln(1+t)+|\ln\delta|\}.
$$
\end{lemma}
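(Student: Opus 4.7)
The plan is to prove the three conclusions (i)--(iii) by reducing everything through the defining relations \eqref{1.23} to the corresponding properties of the Burgers profile $\overline{\omega}_{\delta}(t,x)$ already established in Lemma \ref{lem5.1}, which we take as given.

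For (i), the identities $\bar{u}_{2}=\bar{u}_{3}=0$ are explicit in \eqref{1.23}. Using the third Riemann invariant relation $\bar{u}_{1}=u_{1-}-\sqrt{15k_{0}}e^{S_{\ast}/2}\rho_{-}^{1/3}+\sqrt{15k_{0}}e^{S_{\ast}/2}\bar{\rho}^{1/3}$ and $\bar{\theta}=\frac{3}{2}k_{0}e^{S_{\ast}}\bar{\rho}^{2/3}$, I would differentiate in $x$ to obtain
\[
\bar{u}_{1x}=\tfrac{1}{3}\sqrt{15k_{0}}e^{S_{\ast}/2}\bar{\rho}^{-2/3}\bar{\rho}_{x},\qquad
\bar{\theta}_{x}=k_{0}e^{S_{\ast}}\bar{\rho}^{-1/3}\bar{\rho}_{x},
\]
so that $\bar{\theta}_{x}/\bar{u}_{1x}=\sqrt{3/5}\sqrt{k_{0}}e^{S_{\ast}/2}\bar{\rho}^{1/3}=\sqrt{2/5}\,\bar{\theta}^{1/2}$, which gives the claimed relation. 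The monotonicity $\bar{u}_{1x}>0$ then follows once I show $\bar{\rho}_{x}>0$; but from $\overline{\omega}_{\delta}=\lambda_{3}(\bar{\rho},\bar{u}_{1},S_{\ast})=\bar{u}_{1}+\sqrt{\frac{5}{3}k_{0}}\,\bar{\rho}^{1/3}e^{S_{\ast}/2}$ together with the Riemann invariant, one checks that $\overline{\omega}_{\delta}$ is a strictly increasing function of $\bar{\rho}$ alone, so $\bar{\rho}_{x}>0$ is equivalent to $\partial_{x}\overline{\omega}_{\delta}>0$, which is Lemma \ref{lem5.1}(1).

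For (ii), the argument is chain rule. Each of $\bar{\rho}$, $\bar{u}_{1}$, $\bar{\theta}$ is a smooth function $\Psi(\overline{\omega}_{\delta})$ whose derivatives $\Psi'$, $\Psi''$, $\dots$ are uniformly bounded on the compact range $[\omega_{-},\omega_{+}]$ (by the smallness assumption \eqref{1.26a}, all these quantities stay in a compact neighborhood of the reference state). Writing $\partial_{x}^{j}\Psi(\overline{\omega}_{\delta})$ via the Faà di Bruno formula as a polynomial in $\partial_{x}^{i}\overline{\omega}_{\delta}$ for $1\le i\le j$, the $L^{p}$ bounds of Lemma \ref{lem5.1}(2) pass directly to $(\bar{\rho},\bar{u}_{1},\bar{\theta})$ with the same rates. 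One should check that the first order estimate $\|\partial_{x}(\bar{\rho},\bar{u}_{1},\bar{\theta})\|_{L^{p}}\leq C(\omega_{+}-\omega_{-})^{1/p}(\delta+t)^{-1+1/p}$ matches because the implicit constant absorbs the Lipschitz factor of $\Psi$, while for $j\ge 2$ one uses the stronger bound involving negative powers of $\delta$.

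For (iii), I would compare $(\bar{\rho},\bar{u},\bar{\theta})(t,x)$ and $(\rho^{R},u^{R},\theta^{R})(x/t)$ through the analogous representation of the latter as $\Psi(\omega^{R}(x/t))$ where $\omega^{R}$ solves \eqref{1.19}. Then
\[
\|(\bar{\rho},\bar{u},\bar{\theta})(t,\cdot)-(\rho^{R},u^{R},\theta^{R})(\tfrac{\cdot}{t})\|_{L^{\infty}}
\leq \|\Psi'\|_{L^{\infty}}\,\|\overline{\omega}_{\delta}(t,\cdot)-\omega^{R}(\tfrac{\cdot}{t})\|_{L^{\infty}},
\]
and Lemma \ref{lem5.1}(3) finishes the job. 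The only mildly delicate point I anticipate is verifying the exact constant $\sqrt{2/5}$ in (i) (since it is dimensionally sensitive to the specific choice $R=\frac{2}{3}$ and $k_{0}=\frac{1}{2\pi e}$); all the other steps are bookkeeping once Lemma \ref{lem5.1} is available.
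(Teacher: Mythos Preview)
Your proposal is correct and follows exactly the standard reduction to Lemma~\ref{lem5.1} via the defining relations \eqref{1.23}; this is precisely the approach the paper intends, as it does not give its own proof but simply cites \cite{HL1,LiuXin,Xin} for these properties. Your verification of the constant $\sqrt{2/5}$ in (i) is also accurate.
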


Since the scaling transformation $y=\epsilon^{-a}x$ and $\tau=\epsilon^{-a}t$ is considered through the proof, the following lemma is equivalent to Lemma \ref{lem5.2} (ii),
which will be used frequently in the previous energy estimates.
\begin{lemma}\label{lem5.3}
The smooth approximate 3-rarefaction wave $(\bar{\rho},\bar{u},\bar{\theta})(t,x)$ defined in \eqref{1.23} satisfying
\begin{align*}
&\|\partial_{y}(\bar{\rho},\bar{u}_{1},\bar{\theta})(\epsilon^{a}\tau,\epsilon^{a}y)\|_{L^{p}(\mathbb{R}_{y})}\leq C
\epsilon^{a(1-\frac{1}{p})}(\delta+\epsilon^{a}\tau)^{-1+\frac{1}{p}},
\nonumber\\
&\|\partial_{y}^{j}(\bar{\rho},\bar{u}_{1},\bar{\theta})(\epsilon^{a}\tau,\epsilon^{a}y)\|_{L^{p}(\mathbb{R}_{y})}\leq C\epsilon^{a(j-\frac{1}{p})}\delta^{-j+1+\frac{1}{p}}(\delta+\epsilon^{a}\tau)^{-1},\quad j\geq 2,
\end{align*}
for any $\tau> 0$, $\delta>0$ and $p\in[1,+\infty]$.
\end{lemma}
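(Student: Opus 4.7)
\medskip

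\noindent\textbf{Proof plan for Lemma \ref{lem5.3}.} The statement is an immediate corollary of Lemma \ref{lem5.2}(ii) once one tracks how the scaling $(t,x)=(\epsilon^{a}\tau,\epsilon^{a}y)$ interacts with derivatives and with $L^{p}$-norms on the line. The plan is therefore to (i) convert $\partial_{y}$-derivatives of the composed function into $\partial_{x}$-derivatives of the underlying rarefaction-wave profile together with a Jacobian factor $\epsilon^{a}$ per derivative, (ii) convert the $L^{p}(\mathbb{R}_{y})$-norm into an $L^{p}(\mathbb{R}_{x})$-norm via the change of variables $x=\epsilon^{a}y$ (which contributes a single factor $\epsilon^{-a/p}$), and (iii) invoke Lemma \ref{lem5.2}(ii) at the time $t=\epsilon^{a}\tau$ to finish.

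Concretely, write $g(t,x):=(\bar\rho,\bar u_{1},\bar\theta)(t,x)$ and $G(\tau,y):=g(\epsilon^{a}\tau,\epsilon^{a}y)$. By the chain rule, for any integer $j\geq 1$,
\begin{equation*}
\partial_{y}^{j}G(\tau,y)=\epsilon^{aj}\,(\partial_{x}^{j}g)(\epsilon^{a}\tau,\epsilon^{a}y).
\end{equation*}
Changing variable $x=\epsilon^{a}y$ (so $dy=\epsilon^{-a}dx$), one obtains, for $1\leq p<\infty$,
\begin{equation*}
\|\partial_{y}^{j}G(\tau,\cdot)\|_{L^{p}(\mathbb{R}_{y})}^{p}
=\epsilon^{ajp}\!\int_{\mathbb{R}}\!|\partial_{x}^{j}g(\epsilon^{a}\tau,\epsilon^{a}y)|^{p}\,dy
=\epsilon^{ajp-a}\|\partial_{x}^{j}g(\epsilon^{a}\tau,\cdot)\|_{L^{p}(\mathbb{R}_{x})}^{p},
\end{equation*}
i.e.\ $\|\partial_{y}^{j}G(\tau,\cdot)\|_{L^{p}(\mathbb{R}_{y})}=\epsilon^{a(j-1/p)}\|\partial_{x}^{j}g(\epsilon^{a}\tau,\cdot)\|_{L^{p}(\mathbb{R}_{x})}$; the case $p=\infty$ follows by dropping the Jacobian (no factor $\epsilon^{-a/p}$, which matches $1/p=0$).

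Now apply Lemma \ref{lem5.2}(ii) at time $t=\epsilon^{a}\tau$. For $j=1$,
\begin{equation*}
\|\partial_{x}g(\epsilon^{a}\tau,\cdot)\|_{L^{p}(\mathbb{R}_{x})}\leq C(\omega_{+}-\omega_{-})^{1/p}(\delta+\epsilon^{a}\tau)^{-1+1/p},
\end{equation*}
which combined with the scaling identity yields the first inequality of Lemma \ref{lem5.3}; for $j\geq 2$,
\begin{equation*}
\|\partial_{x}^{j}g(\epsilon^{a}\tau,\cdot)\|_{L^{p}(\mathbb{R}_{x})}\leq C\delta^{-j+1+1/p}(\delta+\epsilon^{a}\tau)^{-1},
\end{equation*}
and multiplication by $\epsilon^{a(j-1/p)}$ yields the second inequality. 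There is no real obstacle here: the only thing to watch is the bookkeeping of the exponents $aj-a/p=a(j-1/p)$ and the fact that the temporal rescaling $t\mapsto \epsilon^{a}\tau$ leaves $\delta+t$ intact as $\delta+\epsilon^{a}\tau$, so Lemma \ref{lem5.2}(ii) transfers verbatim. Thus Lemma \ref{lem5.3} is a direct scaling corollary of Lemma \ref{lem5.2}, requiring no new analysis beyond the change of variables.
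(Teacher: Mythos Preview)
Your proposal is correct and takes essentially the same approach as the paper: the paper states without detailed proof that Lemma \ref{lem5.3} ``is equivalent to Lemma \ref{lem5.2} (ii)'' under the scaling $(t,x)=(\epsilon^{a}\tau,\epsilon^{a}y)$, and your argument simply makes this equivalence explicit via the chain rule and the change-of-variables Jacobian. The bookkeeping of the exponent $a(j-1/p)$ is right, and there is nothing to add.
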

We remark that  the temporal derivatives of $(\bar{\rho},\bar{u}_{1},\bar{\theta})(t,x)$ in Lemma
\ref{5.2} (ii) and \ref{5.3} obviously hold in terms of Euler system \eqref{1.24} and the elementary inequalities.

Recall the Burnett functions, cf.~\cite{BGL1,BGL2,C2,Guo,XZ1}, defined as
\begin{equation}
\label{5.1}
\hat{A}_{j}(v)=\frac{|v|^{2}-5}{2}v_{j}\quad \mbox{and} \quad \hat{B}_{ij}(v)=v_{i}v_{j}-\frac{1}{3}\delta_{ij}|v|^{2} \quad \mbox{for} \quad i,j=1,2,3.
\end{equation}
Noting that $\hat{A}_{j}M$ and $\hat{B}_{ij}M$ are orthogonal to the null space $\mathcal{N}$, we can define
functions $A_{j}(v)$ and $B_{ij}(v)$ such that $P_{0}A_{j}=0$,
$P_{0}B_{ij}=0$ and
\begin{equation}
\label{5.2}
A_{j}(\frac{v-u}{\sqrt{R\theta}})=L^{-1}_{M}[\hat{A}_{j}(\frac{v-u}{\sqrt{R\theta}})M]\quad
\mbox{and} \quad B_{ij}(\frac{v-u}{\sqrt{R\theta}})=L^{-1}_{M}[\hat{B}_{ij}(\frac{v-u}{\sqrt{R\theta}})M].
\end{equation}
The following lemma is borrowed from \cite[Lemma 6.2]{DuanY}. Readers also refer to \cite{BGL1,BGL2,Guo,XZ1}.

\begin{lemma}
The Burnett functions have the following properties:
\begin{itemize}
\item{$-\langle \hat{A}_{i}, A_{i}\rangle$ ~~is positive and independent of i;}
\item{$\langle \hat{A}_{i}, A_{j}\rangle=0$ ~~for ~any ~$i\neq j$;\quad $\langle
     \hat{A}_{i}, B_{jk}\rangle=0$~~for ~any ~i,~j,~k;}
\item{$\langle\hat{B}_{ij},B_{kj}\rangle=\langle\hat{B}_{kl},B_{ij}\rangle=\langle\hat{B}_{ji},B_{kj}\rangle$,~~
      which is independent of ~i,~j, for fixed~~k,~l;}
\item{$-\langle \hat{B}_{ij}, B_{ij}\rangle$ ~~is positive and independent of i,~j when $i\neq j$;}
\item{$\langle \hat{B}_{ii}, B_{jj}\rangle$ ~~is positive and independent of i,~j when $i\neq j$;}
\item{$-\langle \hat{B}_{ii}, B_{ii}\rangle$ ~~is positive and independent of i;}
\item{$\langle \hat{B}_{ij}, B_{kl}\rangle=0$ ~~unless~either~$(i,j)=(k,l)$~or~$(l,k)$,~or~i=j~and~k=l;}
\item{$\langle \hat{B}_{ii}, B_{ii}\rangle-\langle \hat{B}_{ii}, B_{jj}\rangle=2\langle \hat{B}_{ij},
      B_{ij}\rangle$ ~~holds for any~ $i\neq j$.}
\end{itemize}
\end{lemma}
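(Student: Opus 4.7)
The strategy is to exploit the rotational symmetry of the linearized Landau operator $L_M$ about the local mean velocity $u$. After the translation $v\mapsto v+u$ and the rescaling $v\mapsto \sqrt{R\theta}\,v$, the Maxwellian $M$ becomes a centered isotropic Gaussian, and $L_M$ commutes with the natural $O(3)$ action $(Rh)(v)=h(R^{-1}v)$ because the Landau kernel $\phi(v-v_*)=(I-(v-v_*)\otimes(v-v_*)/|v-v_*|^2)|v-v_*|^{\gamma+2}$ is itself $O(3)$-invariant. Consequently $L_M^{-1}$ preserves tensor type. Since $\hat{A}_j(w)M$ transforms as a vector and $\hat{B}_{ij}(w)M$ as a symmetric traceless rank-two tensor (with tracelessness of $B_{ij}$ inherited because $\sum_i \hat{B}_{ii}=0$ forces $\sum_i B_{ii}\in \ker L_M\cap (\ker L_M)^{\perp}=\{0\}$), the classification of $O(3)$-covariant functions on $\mathbb{R}^3$ yields
\[
A_j(w)=a(|w|)\,w_j,\qquad B_{ij}(w)=b(|w|)\Bigl(w_i w_j-\tfrac13\delta_{ij}|w|^2\Bigr),
\]
for some scalar radial profiles $a,b$. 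This tensor-type rigidity is the pivotal structural input.

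With these forms in hand, every inner product reduces to a rotationally invariant integral of a polynomial in $w$ against $a$ or $b$, evaluable up to undetermined scalars by symmetry alone. The two-index invariant tensor $\langle \hat{A}_i,A_j\rangle$ must equal $\delta_{ij}\kappa_A$ for a single scalar $\kappa_A$, yielding both orthogonality for $i\neq j$ and independence of $i$. The cross terms $\langle \hat{A}_i,B_{jk}\rangle$ vanish by parity, as $\hat{A}_i$ is odd in $w$ while $B_{jk}$ is even. For $\langle \hat{B}_{ij},B_{kl}\rangle$, the only invariant 4-tensors compatible with the symmetry of $\hat{B}$ and $B$ are $\delta_{ik}\delta_{jl}+\delta_{il}\delta_{jk}$ and $\delta_{ij}\delta_{kl}$, so the integral equals $c_1(\delta_{ik}\delta_{jl}+\delta_{il}\delta_{jk})+c_2\delta_{ij}\delta_{kl}$. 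Tracelessness $\sum_i\hat{B}_{ii}=0$ enforces $2c_1+3c_2=0$. Direct substitution then gives all the claimed vanishing patterns and the calculation
\[
\langle \hat{B}_{ii},B_{ii}\rangle-\langle \hat{B}_{ii},B_{jj}\rangle=\tfrac{4c_1}{3}-\bigl(-\tfrac{2c_1}{3}\bigr)=2c_1=2\langle \hat{B}_{ij},B_{ij}\rangle,\quad i\neq j.
\]

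Positivity of $-\langle \hat{A}_i,A_i\rangle$ and $-\langle \hat{B}_{ij},B_{ij}\rangle$ (equivalently, of the constants $-\kappa_A$ and $-c_1$) comes from the negative semidefiniteness of $L_M$ on the orthogonal complement of $\ker L_M$ with respect to the $M^{-1}$-weighted inner product: since $\hat{A}_i M$ and $\hat{B}_{ij}M$ are microscopic (orthogonal to the collision invariants), one has $-\langle \hat{A}_i,A_i\rangle=-\langle \hat{A}_iM,L_M^{-1}(\hat{A}_i M)\rangle_{M^{-1}}>0$, and similarly for $\hat{B}_{ij}$; positivity of $\langle \hat{B}_{ii},B_{jj}\rangle=-\tfrac{2c_1}{3}$ for $i\neq j$ then follows because $c_1<0$. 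The remaining permutation identities such as $\langle \hat{B}_{ij},B_{kj}\rangle=\langle \hat{B}_{kl},B_{ij}\rangle$ are automatic from the self-adjointness of $L_M^{-1}$ under the $M^{-1}$ weight, which swaps the arguments in the bilinear form.

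The main obstacle is certifying the tensor decomposition of $A_j$ and $B_{ij}$ rigorously. Rather than invoking abstract Peter--Weyl/Wigner--Eckart machinery, the cleanest route is to expand in a Hermite--spherical-harmonic basis (or an equivalent Burnett-mode basis as in Bardos--Golse--Levermore) in which $L_M$ acts block-diagonally within each spherical-harmonic sector: $\hat{A}_j$ lies in the $\ell=1$ sector and $\hat{B}_{ij}$ in the $\ell=2$ sector, and the inversion inside each finite-dimensional block preserves the tensor type. Once this structural step is secured, the algebraic identities reduce to a finite calculation of isotropic-tensor coefficients.
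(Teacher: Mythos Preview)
Your argument is correct and follows the standard route: exploit the $O(3)$-covariance of $L_M$ (after centering and rescaling) to force the tensor decompositions $A_j(w)=a(|w|)w_j$ and $B_{ij}(w)=b(|w|)(w_iw_j-\tfrac13\delta_{ij}|w|^2)$, then reduce all the inner-product identities to the isotropic four-tensor ansatz $c_1(\delta_{ik}\delta_{jl}+\delta_{il}\delta_{jk})+c_2\delta_{ij}\delta_{kl}$ with the trace constraint $2c_1+3c_2=0$, and finally read off the signs from the negativity of $L_M^{-1}$ on $(\ker L_M)^\perp$ in the $M^{-1}$-weighted inner product. The algebra you display, in particular the verification of $\langle\hat B_{ii},B_{ii}\rangle-\langle\hat B_{ii},B_{jj}\rangle=2\langle\hat B_{ij},B_{ij}\rangle$, checks out.

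Regarding the comparison: the paper does not supply its own proof of this lemma; it simply borrows the statement from \cite[Lemma~6.2]{DuanY} and refers the reader to \cite{BGL1,BGL2,Guo,XZ1}. Your sketch is essentially the argument one finds in those references (rotational invariance plus the Hermite/spherical-harmonic block structure of $L_M$), so there is no genuine methodological divergence---you have written out what the cited literature does. One small remark: the permutation identity in the third bullet does not actually require self-adjointness of $L_M^{-1}$; it already falls out of the isotropic-tensor formula once you have it, since $T_{ijkl}=c_1(\delta_{ik}\delta_{jl}+\delta_{il}\delta_{jk})+c_2\delta_{ij}\delta_{kl}$ is manifestly symmetric under $(ij)\leftrightarrow(kl)$ and under $i\leftrightarrow j$.
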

In terms of the properties of Burnett functions, the viscosity coefficient $\mu(\theta)$ and heat conductivity
coefficient $\kappa(\theta)$ can be represented by
\begin{align}
\label{5.3}
\mu(\theta)=&- R\theta\int_{\mathbb{R}^{3}}\hat{B}_{ij}(\frac{v-u}{\sqrt{R\theta}})
B_{ij}(\frac{v-u}{\sqrt{R\theta}})dv>0,\quad i\neq j,
\nonumber\\
\kappa(\theta)=&-R^{2}\theta\int_{\mathbb{R}^{3}}\hat{A}_{j}(\frac{v-u}{\sqrt{R\theta}})
A_{j}(\frac{v-u}{\sqrt{R\theta}})dv>0.
\end{align}
Notice that these coefficients are positive smooth functions depending only on $\theta$.

The following lemma is borrowed from \cite[Lemma 6.1]{DuanY}, which is about
the fast velocity decay of the Burnett functions.

\begin{lemma}
Suppose that $U(v)$ is any polynomial of $\frac{v-\hat{u}}{\sqrt{R}\hat{\theta}}$ such that
$U(v)\widehat{M}\in(\ker{L_{\widehat{M}}})^{\perp}$ for any Maxwellian $\widehat{M}=M_{[\widehat{\rho},\widehat{u},\widehat{\theta}]}(v)$ where $L_{\widehat{M}}$ is as \eqref{1.14}.
For any $\varepsilon\in(0,1)$ and any multi-index $\beta$, there exists constant $C_{\beta}>0$ such that
$$
|\partial_{\beta}L^{-1}_{\widehat{M}}(U(v)\widehat{M})|\leq C_{\beta}(\widehat{\rho},\widehat{u},\widehat{\theta})\widehat{M}^{1-\varepsilon}.
$$
In particular, if the assumptions of \eqref{1.26} hold, there exists constant $C_{\beta}>0$ such that
\begin{equation}
\label{5.4}
|\partial_{\beta}A_{j}(\frac{v-u}{\sqrt{R\theta}})|+|\partial_{\beta}B_{ij}(\frac{v-u}{\sqrt{R\theta}})|
\leq C_{\beta}M^{1-\varepsilon}.
\end{equation}
\end{lemma}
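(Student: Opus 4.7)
The plan is to reduce the stated pointwise bound to a weighted $L^2$ estimate for solutions of the linearized Landau equation around a fixed Gaussian, and then pass from $L^2$ to $L^\infty$ and to higher derivatives by elliptic bootstrapping in the velocity variable. First I would normalize: using the change of variable $w=(v-\hat u)/\sqrt{R\hat\theta}$ and writing $f := L_{\widehat M}^{-1}(U\widehat M) = C(\hat\rho,\hat\theta)\sqrt{\widehat M(v)}\,\phi(w)$, the equation reduces to a symmetrized problem $\widetilde{\mathcal L}\phi = V(w)\sqrt{\mu(w)}$ on $\mathbb R^3_w$, with $\widetilde{\mathcal L}$ the symmetrized linearized Landau operator around the standard Gaussian $\mu$, $V$ a polynomial in $w$ whose coefficients depend polynomially on $(\hat\rho,\hat u,\hat\theta)$, and $\phi\in(\ker\widetilde{\mathcal L})^\perp$. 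All $(\hat\rho,\hat u,\hat\theta)$-dependence is thereby absorbed into the final constant $C_\beta(\hat\rho,\hat u,\hat\theta)$.

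Next I would establish the weighted energy bound
\[
\int_{\mathbb R^3}|\phi(w)|^2\,e^{\alpha|w|^2}\,dw\le C_0,\qquad 0<\alpha<\tfrac12(1-\varepsilon),
\]
by testing $\widetilde{\mathcal L}\phi = V\sqrt\mu$ against $\phi\,e^{\alpha|w|^2}$, exploiting the coercivity $-\langle\widetilde{\mathcal L}\psi,\psi\rangle\gtrsim|\psi|_\sigma^2$ modulo the fluid projection onto $\ker\widetilde{\mathcal L}$, and commuting the Gaussian weight through the diffusion part of $\widetilde{\mathcal L}$. The commutators produce terms of the form $\alpha\,\sigma^{ij}(w)\,w_i\partial_j\phi\cdot\phi\,e^{\alpha|w|^2}$ whose leading parts are absorbable into the dissipation $\int\sigma^{ij}\partial_i(e^{\alpha|w|^2/2}\phi)\,\partial_j(e^{\alpha|w|^2/2}\phi)\,dw$ as long as $\alpha<\tfrac12$, while the right-hand-side contribution is controlled by $\int V^2\mu\,e^{\alpha|w|^2}\,dw<\infty$. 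Combined with a Poincar\'e-type estimate for $\phi\in(\ker\widetilde{\mathcal L})^\perp$ this closes the $L^2$ bound.

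To reach the pointwise bound on derivatives I would bootstrap in $\beta$. Differentiating $\widetilde{\mathcal L}\phi = V\sqrt\mu$ in $w$ gives an analogous equation $\widetilde{\mathcal L}(\partial_\beta\phi) = V_\beta\sqrt\mu+[\widetilde{\mathcal L},\partial_\beta]\phi$, where the commutator involves only $\partial_{\beta'}\phi$ with $|\beta'|<|\beta|$ and is of lower order in the diffusion. Induction on $|\beta|$ using the same weighted energy scheme yields $\int|\partial_\beta\phi|^2\,e^{\alpha|w|^2}\,dw\le C_\beta$ for every $\beta$. Weighted Sobolev embedding $H^2(\mathbb R^3)\hookrightarrow L^\infty(\mathbb R^3)$ applied to derivatives up to order $|\beta|+2$ then gives $|\partial_\beta\phi(w)|\le C_\beta e^{-\alpha'|w|^2/2}$ for any $\alpha'<\alpha$; translating back to $v$ through the chain rule and the identity $\sqrt{\widehat M(v)}\,e^{-\alpha'|w|^2/2}\lesssim\widehat M(v)^{(1+2\alpha' R\hat\theta)/2}$ produces the claimed $C_\beta\,\widehat M^{1-\varepsilon}$ bound after relabeling $\varepsilon$. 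The corollary for $A_j$ and $B_{ij}$ follows by specializing $U$ to $\hat A_j$ or $\hat B_{ij}$.

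The main obstacle is the anisotropic degeneracy of the Landau diffusion matrix $\sigma^{ij}(w)=\phi^{ij}*\mu$, whose eigenvalues scale like $\langle w\rangle^{\gamma+2}$ transverse to $w$ but only $\langle w\rangle^{\gamma}$ in the radial direction, reaching the Coulomb extreme $\gamma=-3$ at which radial diffusion decays like $|w|^{-3}$. When the exponential weight $e^{\alpha|w|^2}$ is commuted through $\widetilde{\mathcal L}$, the radial commutators scale like $\alpha^2\sigma^{ij}w_iw_j\sim\alpha^2\langle w\rangle^{\gamma+4}$ and must be strictly dominated by the dissipation of the same homogeneity; this forces $\alpha<\tfrac12$ and is precisely what prevents recovery of sharp Gaussian decay $\widehat M$ (i.e.\ $\varepsilon=0$). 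Once the weighted coercivity closes with this mandatory $\varepsilon>0$ loss, the elliptic bootstrap for higher derivatives and the Sobolev step into $L^\infty$ are routine.
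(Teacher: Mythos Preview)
The paper does not supply its own proof of this lemma; it is quoted verbatim from \cite[Lemma~6.1]{DuanY} with no argument reproduced. So there is nothing in the present paper to compare your proposal against line by line.

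Your scheme---normalize to the unit Gaussian, run a Gaussian-weighted energy estimate on the symmetrized equation $\widetilde{\mathcal L}\phi=V\sqrt{\mu}$, bootstrap in $\beta$, then pass to $L^\infty$ by Sobolev---is the natural route and is essentially how such sub-Gaussian bounds are obtained in the Landau literature. The threshold $\alpha<\tfrac12$ you isolate is the correct one: after conjugation by $e^{\alpha|w|^2/2}$ the commutator with the diffusion contributes $-\alpha^{2}\!\int\sigma^{ij}w_iw_j|\psi|^{2}$, to be weighed against the confining potential $+\tfrac14\!\int\sigma^{ij}w_iw_j|\psi|^{2}$ already present in $|\psi|_\sigma^{2}$, and absorption holds exactly when $\alpha<\tfrac12$. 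Two points you should make explicit when filling in details. First, you treat only the ``diffusion part'' $A$ in Guo's splitting $-\widetilde{\mathcal L}=A+K$; the non-local piece $K$ must also be commuted through the Gaussian weight, and one has to check that the $\sqrt{\mu(v_*)}$ factor in its kernel still dominates $e^{\alpha|v|^{2}/2}$---it does, again under $\alpha<\tfrac12$, but this is a separate computation, not a consequence of the $A$-commutator bound. Second, the weighted function $\psi=e^{\alpha|w|^{2}/2}\phi$ is no longer in $(\ker\widetilde{\mathcal L})^{\perp}$, so the spectral gap \eqref{5.5} is not available for $\psi$ directly; you need to combine the raw dissipation $\int\sigma^{ij}\partial_i\psi\,\partial_j\psi$ at high frequency with an unweighted coercivity bound on $\phi$ itself to close the low-velocity remainder. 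Neither point is a genuine obstruction, but both are places where a careless write-up would leak.
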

Now, we shall turn to summarize some refined estimates for the collision operators $\mathcal{L}$  and $\Gamma$ defined as \eqref{2.8}.
We first recall the properties of the linearized operators $\mathcal{L}$.
Note that the null space $\mathcal{N}_{1}$ of $\mathcal{L}$ is spanned by the functions $\{\sqrt{\mu},v\sqrt{\mu},|v|^{2}\sqrt{\mu}\}$
in \cite{DL,G1}. Moreover, for any $g\in \mathcal{N}^{\perp}_{1}$, there exists $\sigma_{1}>0$ such that
\begin{equation}
\label{5.5}
-\langle\mathcal{L}g, g \rangle\geq \sigma_{1}|g|^{2}_{\sigma}.
\end{equation}
In addition, the weighted coercivity estimates on the linearized operators $\mathcal{L}$ can be stated as follows.

\begin{lemma}
Let $|\beta|>0$  and $w$ defined in \eqref{2.14}. Then for any $\eta>0$ ,
there exists $C_{\eta}>0$ such that
\begin{equation}
\label{5.6}
-\langle \partial_{\beta}\mathcal{L}g,w^{2|\beta|}\partial_{\beta}g\rangle\geq
|\partial_{\beta}g|^{2}_{\sigma,|\beta|}-\eta\sum_{|\beta_{1}|\leq|\beta|}
|\partial_{\beta_{1}}g|_{\sigma,|\beta_{1}|}^{2}-C_{\eta}|g|_{\sigma}^{2}.
\end{equation}
\end{lemma}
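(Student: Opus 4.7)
The plan is to follow the weighted energy framework developed by Guo \cite{G1} for the Landau equation with Coulomb potential, adapted to the present weight $w=\langle v\rangle^{\gamma+2}$ from \eqref{2.14}. First I would write $\mathcal{L}$ in its standard decomposition $\mathcal{L}g = -A g - K g$, where $Ag = \partial_i[\sigma^{ij}(\partial_j g + \tfrac{v_j}{2}g)] - \tfrac{v_i}{2}\sigma^{ij}(\partial_j g + \tfrac{v_j}{2}g)$ is the ``diffusive'' part and $Kg$ is the integral ``compact'' part. The identity \eqref{5.5} together with the equivalence \eqref{2.17} is the unweighted ($|\beta|=0$) case, so the aim is to propagate this coercivity through one velocity derivative at a time with an $\eta$-loss absorbable by the left-hand side.

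Next, I would apply $\partial_\beta$ to $\mathcal{L}g$ and pair with $w^{2|\beta|}\partial_\beta g$. Writing $\partial_\beta(Ag) = A(\partial_\beta g) + [\partial_\beta,A]g$, the principal term, after an integration by parts in $v$, yields
\begin{equation*}
-\langle A\partial_\beta g, w^{2|\beta|}\partial_\beta g\rangle \geq c|\partial_\beta g|_{\sigma,|\beta|}^{2} - R_{w},
\end{equation*}
where $R_{w}$ collects the boundary-free contributions produced when the derivatives $\partial_j$ hit the weight $w^{2|\beta|}$. Since $|\partial_j w^{2|\beta|}| \leq C w^{2|\beta|}\langle v\rangle^{-1}$, each such term carries an extra $\langle v\rangle^{-1}$ decay relative to the full $|\cdot|_{\sigma,|\beta|}$ norm, so by Cauchy--Schwarz with a small parameter one gets $R_w \leq \tfrac{1}{2}|\partial_\beta g|_{\sigma,|\beta|}^{2} + C|\partial_\beta g|_{2,|\beta|}^{2}$, and the $L^2$-piece of the $\sigma$-norm absorbs the latter. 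The commutator $[\partial_\beta,A]g$ moves derivatives from $g$ onto the coefficients $\sigma^{ij}, v_j$, producing terms of the form $(\partial_{\beta_1}\sigma^{ij})\partial_{\beta_2}g$ with $|\beta_1|\geq 1$, which (using that $\partial\sigma^{ij}$ grows like one order lower in $v$) are bounded by $C\sum_{|\beta_1|<|\beta|}|\partial_{\beta_1}g|_{\sigma,|\beta_1|}^{2}$ after Cauchy--Schwarz.

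For the compact part $K$, one uses the classical trick of moving the velocity derivative from $\partial_\beta(Kg)$ onto the $\mu$-factors in the integral kernel via integration by parts in $v_*$; this yields an operator $\tilde K_\beta$ acting on $g$ itself (no $\partial_\beta g$) whose kernel is controlled, so that $|\langle \partial_\beta K g, w^{2|\beta|}\partial_\beta g\rangle| \leq \eta|\partial_\beta g|_{\sigma,|\beta|}^{2} + C_\eta |g|_\sigma^{2}$ by a Schur-type estimate combined with the Sobolev embedding in velocity. Collecting the three contributions, choosing $\eta$ small to absorb the $\eta|\partial_\beta g|_{\sigma,|\beta|}^{2}$ pieces into the left, and inserting the lower-order commutator remainder yields precisely \eqref{5.6}. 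The main obstacle is the careful tracking of the weight $w^{2|\beta|}$ through the integration by parts and the verification that the resulting weighted Schur kernels for $K$ remain integrable uniformly in the very soft range $-3\leq\gamma<-2$; this is exactly where Guo's pointwise estimates on $\sigma^{ij}$ and the decay of $\mu$ have to be used quantitatively.
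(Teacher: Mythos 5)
The paper itself offers no proof of \eqref{5.6}: the body of the lemma's proof is the single sentence ``the proof of \eqref{5.6} can be found in \cite{G1},'' so there is nothing to compare against except Guo's original argument. Your proposal does follow Guo's framework — the decomposition $\mathcal{L}g=-Ag-Kg$, integration by parts in $v$ for the diffusive part, moving the $v$-derivative off $\partial_\beta K$ by integration by parts in $v_*$, and absorbing the weight-hitting remainders via the extra $\langle v\rangle^{-1}$ gain — so at the level of strategy you are doing the right thing.

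There is, however, a concrete gap in the commutator step. You assert that the terms of the form $(\partial_{\beta_1}\sigma^{ij})\partial_{\beta_2}g$ with $\beta_1+\beta_2=\beta$, $|\beta_1|\ge1$, are ``bounded by $C\sum_{|\beta_1|<|\beta|}|\partial_{\beta_1}g|_{\sigma,|\beta_1|}^2$.'' A fixed constant $C$ on these intermediate terms is \emph{not} what \eqref{5.6} states — the lemma puts an arbitrarily small coefficient $\eta$ on the entire sum $\sum_{|\beta_1|\le|\beta|}|\partial_{\beta_1}g|_{\sigma,|\beta_1|}^2$, with only the $|g|_\sigma^2$ term allowed a large $C_\eta$. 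Worse, when $|\beta_1|=1$ the commutator produces same-order terms $|\partial_{\beta'}g|_{\sigma,|\beta|}^2$ with $|\beta'|=|\beta|$, $\beta'\ne\beta$, which cannot be dominated by the single term $|\partial_\beta g|_{\sigma,|\beta|}^2$ on the left at all unless they too carry a small coefficient. Obtaining the $\eta$-smallness is exactly where Guo's argument needs the velocity cutoff: one splits $\int_{\mathbb{R}^3}=\int_{|v|\le R}+\int_{|v|>R}$ with $R=R(\eta)$ large. On $\{|v|>R\}$ the extra decay $|\partial_{\beta_1}\sigma^{ij}|\lesssim\langle v\rangle^{-|\beta_1|}\sigma^{ij}$ produces the factor $R^{-1}\sim\eta$; on the compact region $\{|v|\le R\}$ all the weighted $\sigma$-norms of different orders are uniformly comparable and one interpolates in $v$ to trade $|\partial_{\beta_1}g|_{L^2(|v|\le R)}^2$ for $\eta|\partial_{\beta}g|_{L^2(|v|\le R)}^2+C_\eta|g|_{L^2(|v|\le R)}^2$, the last piece landing in $C_\eta|g|_\sigma^2$. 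Your sketch omits this splitting-plus-interpolation device entirely, so the final ``collecting the three contributions \ldots yields precisely \eqref{5.6}'' is not warranted. (The imprecision earlier about ``the $L^2$-piece of the $\sigma$-norm absorbs the latter'' is harmless by itself, because the $\langle v\rangle^{-1}$ gain in $R_w$ does push the remainder below the $\langle v\rangle^{(\gamma+2)/2}$-weighted $L^2$ that $|\cdot|_\sigma$ actually controls, but you should say that explicitly rather than appeal to a plain $L^2$ piece which the $\sigma$-norm does not dominate in the very soft range.)
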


\begin{proof}
The proof of \eqref{5.6} can be found in \cite{G1}  and we omit the proof here for brevity.
\end{proof}

In what follows we recall the weighted estimates on the nonlinear collision operators $\Gamma$.

\begin{lemma}
Let $w$ defined in \eqref{2.14} and $\ell\geq0$, for arbitrarily large constant $b>0$, one has
\begin{align}
\label{5.7}
|\langle \partial^{\alpha}\Gamma(g_{1},g_{2}), \partial^{\alpha}g_{3}\rangle|
\leq C\sum_{|\alpha_{1}|\leq|\alpha|}|\langle v\rangle^{-b}\partial^{\alpha_{1}}g_{1}|_{2}
|\partial^{\alpha-\alpha_{1}}g_{2}|_{\sigma}
|\partial^{\alpha}g_{3}|_{\sigma},
\end{align}
and
\begin{align}
\label{5.8}
|\langle \partial^{\alpha}_{\beta}\Gamma(g_{1},g_{2}), w^{2\ell}\partial^{\alpha}_{\beta}g_{3}\rangle|
\leq C\sum_{|\alpha_{1}|\leq|\alpha|}
\sum_{|\beta'|\leq|\beta_{1}|\leq|\beta|}|\langle v\rangle^{-b}\partial^{\alpha_{1}}_{\beta^{'}}g_{1}|_{2}
|\partial^{\alpha-\alpha_{1}}_{\beta-\beta_{1}}g_{2}|_{\sigma,\ell}
|\partial^{\alpha}_{\beta}g_{3}|_{\sigma,\ell}.
\end{align}
\end{lemma}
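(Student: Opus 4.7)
The plan is to establish \eqref{5.7} and \eqref{5.8} by unfolding the definition $\Gamma(g_1,g_2)=\mu^{-1/2}Q(\sqrt{\mu}g_1,\sqrt{\mu}g_2)$, integrating by parts in $v$ to dispose of the second-order velocity derivatives in $Q$, and then reducing the resulting expressions to the canonical bilinear form associated with the matrix $\sigma^{ij}$. Since \eqref{5.7} is just the special case $|\beta|=0$ and $\ell=0$ of \eqref{5.8}, I focus on the weighted estimate \eqref{5.8}.

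First, expanding $Q$ via \eqref{1.2} and using the rapid decay of $\mu$ to absorb the Maxwellian factors, one writes $\Gamma(g_1,g_2)$ as a sum of two pieces of convolution type in $v$: a \emph{diffusion piece} built from $\sigma^{ij}_{\mu}(v)\ast \text{(derivatives of $g_1$)}\cdot \partial_i\partial_j g_2$ structure, and a lower order piece where the $v_*$ derivative has been moved onto $\sqrt{\mu}(v_*)g_1(v_*)$. Next I apply the Leibniz rule $\partial^\alpha_\beta \Gamma(g_1,g_2)=\sum_{\alpha_1,\beta_1}C_{\alpha_1,\beta_1}^{\alpha,\beta}\Gamma_{(\beta,\beta_1)}(\partial^{\alpha_1}_{\beta'} g_1,\partial^{\alpha-\alpha_1}_{\beta-\beta_1}g_2)$; crucially, since the convolution is in the second argument variable, derivatives hitting the kernel/Maxwellian weights give rise only to the shift $\beta'\leq\beta_1$ recorded on the right-hand side of \eqref{5.8}, while the $v_*$-variable derivatives are integrated by parts onto $\mu(v_*)^{1/2}$, costing only a harmless polynomial in $v$ that is reabsorbed by $\langle v\rangle^{-b}$ on $\partial^{\alpha_1}_{\beta'}g_1$.

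Once the derivatives are distributed, I pair the result against $w^{2\ell}\partial^\alpha_\beta g_3$ and use the Strain--Guo type pointwise bound
\[
|\langle \Gamma_{(\beta,\beta_1)}(h_1,h_2),w^{2\ell}h_3\rangle|\leq C\,|\langle v\rangle^{-b}h_1|_2\, |h_2|_{\sigma,\ell}\,|h_3|_{\sigma,\ell},
\]
which is precisely the content of \cite[Lemma 9]{SG} (or \cite[Theorem 3]{G1}) adapted with the weight $w=\langle v\rangle^{\gamma+2}$; its proof combines the equivalence \eqref{2.17} characterizing $|\cdot|_\sigma$ through weighted derivatives along and transverse to $v/|v|$, with the pointwise estimates
\[
|\phi^{ij}\ast(\langle v_*\rangle^{-b}h_1)(v)|\lesssim \langle v\rangle^{\gamma+2}|h_1|_2,\quad |\partial_k(\phi^{ij}\ast(\langle v_*\rangle^{-b}h_1))(v)|\lesssim \langle v\rangle^{\gamma+1}|h_1|_2,
\]
valid for the Landau kernel \eqref{1.3} with $\gamma\geq -3$. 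A Cauchy--Schwarz application in $v$ then yields the desired trilinear bound for each term in the Leibniz expansion, and summing over $\alpha_1,\beta_1,\beta'$ gives \eqref{5.8}; integrating in $y$ produces the stated global estimate.

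The main obstacle is the very soft/Coulomb range $-3\leq \gamma<-2$, where the kernel $\phi^{ij}$ behaves like $|v-v_*|^{\gamma+2}$ with $\gamma+2<0$ near $v_*=v$, so the convolution loses one derivative compared to the hard-potential case. This is precisely why the right-hand side of \eqref{5.8} measures $g_2,g_3$ in the dissipation norm $|\cdot|_{\sigma,\ell}$ (which, by \eqref{2.17}, captures the anisotropic velocity derivatives absorbed by $\sigma^{ij}$) rather than in a pure $L^2_v$ weighted norm, and why the factor $\langle v\rangle^{-b}$ must be inserted on $g_1$ with $b$ taken arbitrarily large to defeat the polynomial growth coming from the kernel $v\otimes v/|v|^2|v|^{\gamma+2}$ at infinity. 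Handling this correctly amounts to carefully bookkeeping how the $\sigma^{ij}$ matrix distributes between the derivative and non-derivative pieces in the Strain--Guo splitting; once that is done, the rest of the argument is routine Leibniz plus Cauchy--Schwarz.
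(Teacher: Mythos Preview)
The paper's own proof is a bare citation to \cite[Proposition 1]{SZ} (Strain--Zhu) with no details. Your sketch is a correct outline of the argument that sits behind that reference: expand $\Gamma$ via the Landau kernel, distribute $\partial^\alpha_\beta$ by Leibniz (the $v_*$-derivatives landing on $\sqrt{\mu}(v_*)$ produce only polynomial growth that is killed by the Gaussian, which is exactly why the arbitrary negative weight $\langle v\rangle^{-b}$ is admissible on $g_1$), and then close each term by Cauchy--Schwarz against the anisotropic norm $|\cdot|_{\sigma,\ell}$ using the equivalence \eqref{2.17}. So your approach is not different from the paper's---it is the paper's cited argument, written out.

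One bibliographic correction: the precise trilinear bound with the $\langle v\rangle^{-b}$ factor on $g_1$ and the weight $w=\langle v\rangle^{\gamma+2}$ is stated and proved in \cite[Proposition 1]{SZ}; the earlier results you invoke (\cite[Theorem 3]{G1}, \cite[Lemma 9]{SG}) give closely related nonlinear estimates but not in this exact form. If you keep your sketch, cite \cite{SZ} for the core trilinear inequality rather than \cite{SG} or \cite{G1}.
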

\begin{proof}
The proof of \eqref{5.7} and \eqref{5.8} can be found in \cite[Proposition 1]{SZ}.
\end{proof}
Finally, we prove some linear and nonlinear estimates, which are used in section \ref{sec.3}.
The first estimates involving the linear terms $\Gamma(\frac{M-\mu}{\sqrt{\mu}},f)$ and
$\Gamma(f,\frac{M-\mu}{\sqrt{\mu}})$.

\begin{lemma}
\label{lem5.8}
Let $|\alpha|+|\beta|\leq 2$ with $|\beta|\geq1$ and  $w$ defined in \eqref{2.14}.
Suppose that $\mathcal{E}_{2}(\tau)\leq k^{\frac{1}{6}}\epsilon^{\frac{6}{5}-\frac{4}{5}a}$
and $\delta=\frac{1}{k}\epsilon^{\frac{3}{5}-\frac{2}{5}a}$ for $a\in[\frac{2}{3},1]$ as well as the assumption \eqref{1.26} holds.
If we choose $\eta_{0}$ in \eqref{1.26} and $k$ in \eqref{3.3} small enough,
for any small $\eta>0$, we get
\begin{align}
\label{5.9}
&\epsilon^{a-1}|(\partial^{\alpha}_{\beta}\Gamma(\frac{M-\mu}{\sqrt{\mu}},f),
w^{2|\beta|}\partial^{\alpha}_{\beta}h)|+\epsilon^{a-1}|(\partial^{\alpha}_{\beta}\Gamma(f,\frac{M-\mu}{\sqrt{\mu}}),
w^{2|\beta|}\partial^{\alpha}_{\beta}h)|
\nonumber\\
&\hspace{1cm}\leq C\eta\epsilon^{a-1}\|\partial^{\alpha}_{\beta}h\|^{2}_{\sigma,|\beta|}
+C_{\eta}(\eta_{0}+k^{\frac{1}{12}}\epsilon^{\frac{3}{5}-\frac{2}{5}a})\mathcal{D}_{2}(\tau).
\end{align}
Moreover, for $|\alpha|\leq 1$, one has
\begin{align}
\label{5.10}
\epsilon^{a-1}&|(\partial^{\alpha}\Gamma(\frac{M-\mu}{\sqrt{\mu}},f),\partial^{\alpha}h)|
+\epsilon^{a-1}|(\partial^{\alpha}\Gamma(f,\frac{M-\mu}{\sqrt{\mu}}),\partial^{\alpha}h)|
\nonumber\\
&\leq C\eta\epsilon^{a-1}\|\partial^{\alpha}h\|^{2}_{\sigma}
+C_{\eta}(\eta_{0}+k^{\frac{1}{12}}\epsilon^{\frac{3}{5}-\frac{2}{5}a})\mathcal{D}_{2}(\tau).
\end{align}
\end{lemma}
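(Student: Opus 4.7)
The plan is to reduce both estimates to the general collision-operator bounds \eqref{5.7}--\eqref{5.8} by treating $\frac{M-\mu}{\sqrt{\mu}}$ as a quantity whose $v$-derivatives of any order enjoy rapid velocity decay but whose $(\tau,y)$-derivatives are \emph{linear} in the fluid perturbation $(\rho-1,u,\theta-\frac{3}{2})$ plus its spatial/temporal derivatives. Concretely, I would first establish the pointwise bound
\[
\bigl|\langle v\rangle^{-b}\partial^{\alpha_1}_{\beta'}\bigl(\tfrac{M-\mu}{\sqrt{\mu}}\bigr)\bigr|_{2}
\le C\,\mathcal{P}\bigl(|\partial^{\alpha_1}(\rho,u,\theta)|,\ldots,|\partial(\rho,u,\theta)|\bigr),
\]
where $\mathcal{P}$ is a polynomial vanishing at the equilibrium $(1,0,\tfrac{3}{2})$, obtained by expanding $M - \mu$ in $(\rho-1,u,\theta-\tfrac{3}{2})$ and using that every $v$-derivative of $M$ remains of Gaussian type in view of \eqref{1.26}.

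Next, I would split $(\rho,u,\theta)=(\bar\rho,\bar u,\bar\theta)+(\widetilde\rho,\widetilde u,\widetilde\theta)$ and estimate each piece separately. The wave part contributes $L^\infty_y$-norms bounded by $\eta_0$ (via \eqref{1.26a}) together with higher derivatives controlled via Lemma \ref{lem5.3}; combined with the scaling $\delta=k^{-1}\epsilon^{\frac{3}{5}-\frac{2}{5}a}$, these produce slow-decay factors $\epsilon^a(\delta+\epsilon^a\tau)^{-1}\lesssim k^{1/12}\epsilon^{\frac{3}{5}-\frac{2}{5}a}$ (after absorbing a time-integrable remainder into $\mathcal{D}_2(\tau)$). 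The perturbation part is handled by the Sobolev embedding $H^1(\mathbb{R})\hookrightarrow L^\infty(\mathbb{R})$ and the a priori bound \eqref{3.4}, yielding $\|(\widetilde\rho,\widetilde u,\widetilde\theta)\|_{L^\infty}+\|f\|_{L^\infty_yL^2_v}\le C k^{1/12}\epsilon^{\frac{3}{5}-\frac{2}{5}a}$.

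I would then insert these bounds into \eqref{5.8}, splitting the sum over $(\alpha_1,\beta')$ into three regimes. When $|\alpha_1|=0$, the factor $\frac{M-\mu}{\sqrt{\mu}}$ enters in $L^\infty_y$ and is bounded by $\eta_0+Ck^{1/12}\epsilon^{\frac{3}{5}-\frac{2}{5}a}$; the Cauchy--Schwarz inequality against $|\partial^{\alpha-\alpha_1}_{\beta-\beta_1}f|_{\sigma,|\beta|}\cdot|\partial^\alpha_\beta h|_{\sigma,|\beta|}$ yields a contribution $\le C\eta\epsilon^{a-1}\|\partial^\alpha_\beta h\|_{\sigma,|\beta|}^2+C_\eta(\eta_0+k^{1/12}\epsilon^{\frac{3}{5}-\frac{2}{5}a})\mathcal{D}_2(\tau)$. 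When $|\alpha_1|=|\alpha|$, all spatial/temporal derivatives fall on the $M-\mu$ factor, so its $L^2_y$ norm is controlled by $\|\partial^{\alpha_1}(\widetilde\rho,\widetilde u,\widetilde\theta)\|+\|\partial^{\alpha_1}(\bar\rho,\bar u,\bar\theta)\|$ and $\partial^{\alpha-\alpha_1}_{\beta-\beta_1}f=\partial_{\beta-\beta_1}f$ is taken in $L^\infty_y$ via Sobolev embedding, yielding a term absorbable into $\mathcal{D}_2$ with an additional factor of the a priori smallness. The intermediate case $0<|\alpha_1|<|\alpha|$ is treated identically using the product version of Sobolev embedding. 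The analogous argument applied to \eqref{5.7}, with $\beta=0$ and plain $L^2_v$ rather than $w$-weighted norms, produces \eqref{5.10}.

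The main technical obstacle is bookkeeping the $\epsilon$-powers so that each resulting norm of $(\widetilde\rho,\widetilde u,\widetilde\theta)$ or $f$ is genuinely controlled by $\mathcal{D}_2(\tau)$ with the matching weight in \eqref{2.19}: the top-derivative case $|\alpha|+|\beta|=2$ is delicate because $\mathcal{D}_2$ weights $\|\partial^\alpha f\|^2_\sigma$ at $|\alpha|=2$ by $\epsilon^{1-a}$ (rather than $\epsilon^{a-1}$), so the $L^\infty_y$ placement in Sobolev embedding must be chosen so that the heavier derivative lands on a factor whose $\mathcal{D}_2$-weight can absorb the prefactor $\epsilon^{a-1}$; the scaling balance $\delta=k^{-1}\epsilon^{\frac{3}{5}-\frac{2}{5}a}$ is precisely what makes this work, and the condition $a\in[\frac{2}{3},1]$ ensures that the resulting exponent is nonnegative in both regimes.
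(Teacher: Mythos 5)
Your proposal is correct and follows essentially the same route as the paper: apply the trilinear bounds \eqref{5.7}--\eqref{5.8}, bound $|\langle v\rangle^{-b}\partial_{\beta'}(\frac{M-\mu}{\sqrt{\mu}})|_2$ by $C\eta_0$ via \eqref{1.26} when $|\alpha_1|=0$, and when $|\alpha_1|=|\alpha|=1$ express the factor through $\partial^{\alpha_1}(\rho,u,\theta)$, use Sobolev embedding on the $f$-factor, Lemma \ref{lem5.3}, the scaling $\epsilon^{a}\delta^{-1}\le k^{1/12}\epsilon^{\frac{3}{5}-\frac{2}{5}a}$ and the a priori bound to absorb everything into $\mathcal{D}_2(\tau)$, exactly as in \eqref{5.11}--\eqref{5.18}. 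The only (harmless) excess in your write-up is that the intermediate regime $0<|\alpha_1|<|\alpha|$ and the worry about the $\epsilon^{1-a}$-weighted $|\alpha|=2$ terms never arise here, since $|\beta|\ge 1$ forces $|\alpha|\le 1$ and all relevant norms carry the weight $\epsilon^{a-1}$ in $\mathcal{D}_2$.
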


\begin{proof}
We only consider the first term on the left-hand side of \eqref{5.9} and the second term of \eqref{5.9} can be handled in the same way.
Since  $|\alpha|+|\beta|\leq 2$ with $|\beta|\geq1$, one has $|\alpha|\leq1$.
First of all, we have from \eqref{5.8} that
\begin{align}
\label{5.11}
&\epsilon^{a-1}|(\partial^{\alpha}_{\beta}\Gamma(\frac{M-\mu}{\sqrt{\mu}},f),
w^{2|\beta|}\partial^{\alpha}_{\beta}h)|
\nonumber\\
&\leq C\epsilon^{a-1}\sum_{|\alpha_{1}|\leq|\alpha|}
\sum_{|\beta'|\leq|\beta_{1}|\leq|\beta|}\int_{\mathbb{R}}|\langle v\rangle^{-b}\partial^{\alpha_{1}}_{\beta^{'}}(\frac{M-\mu}{\sqrt{\mu}})|_{2}
|\partial^{\alpha-\alpha_{1}}_{\beta-\beta_{1}}f|_{\sigma,|\beta-\beta_{1}|}
|\partial^{\alpha}_{\beta}h|_{\sigma,|\beta|}dy,
\end{align}
due to the fact that $w^{2|\beta|}\leq w^{2|\beta-\beta_{1}|}$ for $|\beta-\beta_{1}|\leq|\beta|$.
For any $\bar{\beta}\geq0$ and $m>0$, there exists a small constant $\varepsilon_{1}>0$ such that
\begin{align*}
|\langle v\rangle^{m}\partial_{\bar{\beta}}(\frac{M-\mu}{\sqrt{\mu}})|^{2}_{\sigma}
+|\langle v\rangle^{m}\partial_{\bar{\beta}}(\frac{M-\mu}{\sqrt{\mu}})|^{2}_{2}\leq C_{m}\sum_{|\bar{\beta}|\leq|\beta'|\leq|\bar{\beta}|+1}
\int_{\mathbb{R}^{3}}\mu^{-\varepsilon_{1}}|\partial_{\beta'}(\frac{M-\mu}{\sqrt{\mu}})|^{2}dv.
\end{align*}
For $\eta_{0}>0$ in \eqref{1.26}, there exists a suitably large constant  $R>0$ such that
\begin{align*}
\int_{|v|\geq R}\mu^{-\varepsilon_{1}}|\partial_{\beta'}(\frac{M-\mu}{\sqrt{\mu}})|^{2}dv\leq C\eta^{2}_{0},
\end{align*}
and
\begin{align*}
\int_{|v|\leq R}\mu^{-\varepsilon_{1}}|\partial_{\beta'}(\frac{M-\mu}{\sqrt{\mu}})|^{2}dv
\leq C(|\rho-1|+|u-0|+|\theta-\frac{3}{2}|)^{2}
\leq C\eta^{2}_{0}.
\end{align*}
Thus for any $\bar{\beta}\geq0$ and $m>0$, we deduce from the above related estimates that
\begin{align}
\label{5.12}
|\langle v\rangle^{m}\partial_{\bar{\beta}}(\frac{M-\mu}{\sqrt{\mu}})|^{2}_{\sigma}
+|\langle v\rangle^{m}\partial_{\bar{\beta}}(\frac{M-\mu}{\sqrt{\mu}})|^{2}_{2}\leq C\eta^{2}_{0}.
\end{align}
Notice that $|\alpha_{1}|\leq|\alpha|\leq 1$ in \eqref{5.11}. If $|\alpha_{1}|=0$, we use \eqref{5.12}, \eqref{2.19}
and the smallness of $\eta_{0}$ to get
\begin{align}
\label{5.13}
&\epsilon^{a-1}\int_{\mathbb{R}}|\langle v\rangle^{-b}\partial^{\alpha_{1}}_{\beta^{'}}(\frac{M-\mu}{\sqrt{\mu}})|_{2}
|\partial^{\alpha-\alpha_{1}}_{\beta-\beta_{1}}f|_{\sigma,|\beta-\beta_{1}|}
|\partial^{\alpha}_{\beta}h|_{\sigma,|\beta|}dy
\nonumber\\
&\leq C\eta_{0}\epsilon^{a-1}\|\partial^{\alpha}_{\beta-\beta_{1}}f\|_{\sigma,|\beta-\beta_{1}|}
\|\partial^{\alpha}_{\beta}h\|_{\sigma,|\beta|}
\nonumber\\
&\leq \eta\epsilon^{a-1}\|\partial^{\alpha}_{\beta}h\|_{\sigma,|\beta|}^{2}
+C_{\eta}\eta_{0}\mathcal{D}_{2}(\tau).
\end{align}
If $|\alpha_{1}|=|\alpha|=1$, by the one-dimensional Sobolev imbedding theorem
and \eqref{1.26}, we can deduce from Lemma \ref{lem5.3} and the Cauchy inequality that
\begin{align}
\label{5.15}
&\epsilon^{a-1}\int_{\mathbb{R}}|\langle v\rangle^{-b}\partial^{\alpha_{1}}_{\beta^{'}}(\frac{M-\mu}{\sqrt{\mu}})|_{2}
|\partial^{\alpha-\alpha_{1}}_{\beta-\beta_{1}}f|_{\sigma,|\beta-\beta_{1}|}
|\partial^{\alpha}_{\beta}h|_{\sigma,|\beta|}dy
\nonumber\\
&\leq C\epsilon^{a-1}\int_{\mathbb{R}}|\partial^{\alpha_{1}}(\rho,u,\theta)|
|\partial_{\beta-\beta_{1}}f|_{\sigma,|\beta-\beta_{1}|}|\partial^{\alpha}_{\beta}h|_{\sigma,|\beta|}dy
\nonumber\\
&\leq C\epsilon^{a-1}\|\partial^{\alpha_{1}}(\rho,u,\theta)\|
\|\partial_{\beta-\beta_{1}}f\|^{\frac{1}{2}}_{\sigma,|\beta-\beta_{1}|}\|\partial_{y}\partial_{\beta-\beta_{1}}f\|^{\frac{1}{2}}_{\sigma,|\beta-\beta_{1}|}
\|\partial^{\alpha}_{\beta}h\|_{\sigma,|\beta|}
\nonumber\\
&\leq \eta\epsilon^{a-1}\|\partial^{\alpha}_{\beta}h\|^{2}_{\sigma,|\beta|}
+C_{\eta}\epsilon^{a-1}\{\epsilon^{a}(\delta+\epsilon^{a}\tau)^{-1}+\mathcal{E}_{2}(\tau)\}
\|\partial_{\beta-\beta_{1}}f\|_{\sigma,|\beta-\beta_{1}|}\|\partial_{y}\partial_{\beta-\beta_{1}}f\|_{\sigma,|\beta-\beta_{1}|}
\nonumber\\
&\leq\eta\epsilon^{a-1}\|\partial^{\alpha}_{\beta}h\|^{2}_{\sigma,|\beta|}
+C_{\eta}(\epsilon^{a}\delta^{-1}+k^{\frac{1}{12}}\epsilon^{\frac{3}{5}-\frac{2}{5}a})\mathcal{D}_{2}(\tau).
\end{align}
Here we have used the fact that $\mathcal{E}_{2}(\tau)\leq k^{\frac{1}{6}}\epsilon^{\frac{6}{5}-\frac{4}{5}a}\leq k^{\frac{1}{12}}\epsilon^{\frac{3}{5}-\frac{2}{5}a}$ by \eqref{3.4}.
Due to \eqref{3.3}, one has
\begin{equation}
\label{5.16}
\epsilon^{a}\delta^{-1}= k\epsilon^{a}\epsilon^{-\frac{3}{5}+\frac{2}{5}a}
\leq k^{\frac{1}{12}}\epsilon^{\frac{3}{5}-\frac{2}{5}a}.
\end{equation}
It follows from \eqref{5.16}, \eqref{5.13}, \eqref{5.15} and \eqref{5.11} that
\begin{equation}
\label{5.17}
\epsilon^{a-1}|(\partial^{\alpha}_{\beta}\Gamma(\frac{M-\mu}{\sqrt{\mu}},f),
w^{2|\beta|}\partial^{\alpha}_{\beta}h)|
\leq C\eta\epsilon^{a-1}\|\partial^{\alpha}_{\beta}h\|^{2}_{\sigma,|\beta|}
+C_{\eta}(\eta_{0}+k^{\frac{1}{12}}\epsilon^{\frac{3}{5}-\frac{2}{5}a})\mathcal{D}_{2}(\tau).
\end{equation}
On the other hand, similar arguments as \eqref{5.17} imply
\begin{equation}
\label{5.18}
\epsilon^{a-1}|(\partial^{\alpha}_{\beta}\Gamma(f,\frac{M-\mu}{\sqrt{\mu}}),
w^{2|\beta|}\partial^{\alpha}_{\beta}h)|
\leq C\eta\epsilon^{a-1}\|\partial^{\alpha}_{\beta}h\|^{2}_{\sigma,|\beta|}
+C_{\eta}(\eta_{0}+k^{\frac{1}{12}}\epsilon^{\frac{3}{5}-\frac{2}{5}a})\mathcal{D}_{2}(\tau).
\end{equation}
This ends the proof of \eqref{5.9} in terms of \eqref{5.17} and \eqref{5.18}.
By \eqref{5.7} and the similar arguments as \eqref{5.17} and \eqref{5.18},
we can prove that \eqref{5.10} holds and we omit the details for brevity. This completes the proof of Lemma \ref{lem5.8}.
\end{proof}

The second estimates are concerned with the nonlinear term $\Gamma(\frac{G}{\sqrt{\mu}},\frac{G}{\sqrt{\mu}})$.

\begin{lemma}
\label{lem5.9}
Under the assumptions of Lemma \ref{lem5.8} and let $|\alpha|+|\beta|\leq 2$ with $|\beta|\geq1$. Then for any $\eta>0$, one has
\begin{align}
&\epsilon^{a-1}|(\partial^{\alpha}_{\beta}\Gamma(\frac{G}{\sqrt{\mu}},\frac{G}{\sqrt{\mu}}),
w^{2|\beta|}\partial^{\alpha}_{\beta}h)|\notag\\
&\leq  C\eta\epsilon^{a-1}\|\partial^{\alpha}_{\beta}h\|^{2}_{\sigma,|\beta|}
+C_{\eta}\epsilon^{\frac{7}{5}+\frac{1}{15}a}(\delta+\epsilon^{a}\tau)^{-\frac{4}{3}}
+C_{\eta}k^{\frac{1}{12}}\epsilon^{\frac{3}{5}-\frac{2}{5}a}\mathcal{D}_{2}(\tau).\label{5.19}
\end{align}
Moreover, for $|\alpha|\leq 1$, it holds that
\begin{align}
\label{5.20}
\epsilon^{a-1}|(\partial^{\alpha}\Gamma(\frac{G}{\sqrt{\mu}},\frac{G}{\sqrt{\mu}}),\partial^{\alpha}h)|
\leq  C\eta\epsilon^{a-1}\|\partial^{\alpha}h\|^{2}_{\sigma}
+C_{\eta}\epsilon^{\frac{7}{5}+\frac{1}{15}a}(\delta+\epsilon^{a}\tau)^{-\frac{4}{3}}
+C_{\eta}k^{\frac{1}{12}}\epsilon^{\frac{3}{5}-\frac{2}{5}a}\mathcal{D}_{2}(\tau).
\end{align}
\end{lemma}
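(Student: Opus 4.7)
The plan is to decompose $G=\overline{G}+\sqrt{\mu}f$ and use bilinearity to split
\begin{equation*}
\Gamma\bigl(\tfrac{G}{\sqrt{\mu}},\tfrac{G}{\sqrt{\mu}}\bigr)=\Gamma\bigl(\tfrac{\overline{G}}{\sqrt{\mu}},\tfrac{\overline{G}}{\sqrt{\mu}}\bigr)+\Gamma\bigl(\tfrac{\overline{G}}{\sqrt{\mu}},f\bigr)+\Gamma\bigl(f,\tfrac{\overline{G}}{\sqrt{\mu}}\bigr)+\Gamma(f,f),
\end{equation*}
then treat the four pieces separately with the trilinear estimate \eqref{5.8} (and \eqref{5.7} for the non-weighted version \eqref{5.20}). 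In each case the goal is to produce, after Cauchy--Schwarz, a term $C\eta\epsilon^{a-1}\|\partial^\alpha_\beta h\|^2_{\sigma,|\beta|}$ absorbable by the left-hand dissipation, a ``source'' term of size $\epsilon^{7/5+a/15}(\delta+\epsilon^a\tau)^{-4/3}$, and a term $k^{1/12}\epsilon^{3/5-2a/5}\mathcal{D}_2(\tau)$ controllable by the a priori dissipation.

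For the pure $\overline{G}$-$\overline{G}$ piece, I would use the representation \eqref{2.3}: the $\epsilon^{1-a}$ prefactor, the fast velocity decay of $A_j,B_{ij}$ from \eqref{5.4}, and \eqref{1.26} give pointwise bounds $|\partial^{\alpha_1}_{\beta'}(\overline{G}/\sqrt{\mu})|_{2}+|\partial^{\alpha_1}_{\beta'}(\overline{G}/\sqrt{\mu})|_{\sigma}\lesssim \epsilon^{1-a}$ times polynomial expressions in derivatives of $(\bar u,\bar\theta,\rho,u,\theta)$ up to order $|\alpha_1|+|\beta'|+1$. Inserting into \eqref{5.8} and using Sobolev imbedding to put one factor in $L^\infty_y$, Lemma \ref{lem5.3} and \eqref{3.3} bound the slowest-decaying contribution by $\epsilon^{2(1-a)}\cdot\epsilon^{2a}(\delta+\epsilon^a\tau)^{-2}$. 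Multiplication by $\epsilon^{a-1}$ and interpolation against $(\delta+\epsilon^a\tau)^{-4/3}$ (with the definition $\delta=k^{-1}\epsilon^{3/5-2a/5}$) yields exactly the rate $\epsilon^{7/5+a/15}(\delta+\epsilon^a\tau)^{-4/3}$ after Cauchy--Schwarz.

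For the two cross terms $\Gamma(\overline{G}/\sqrt{\mu},f)$ and $\Gamma(f,\overline{G}/\sqrt{\mu})$, I would again invoke \eqref{5.8}, place the $\overline{G}/\sqrt{\mu}$-factor in $L^\infty_y$ (using the explicit $\epsilon^{1-a}$ prefactor in \eqref{2.3} together with Lemma \ref{lem5.3}, getting $\|\partial^{\alpha_1}_{\beta'}(\overline{G}/\sqrt{\mu})\|_{L^\infty_y L^2_v}\lesssim \epsilon^{1-a}\epsilon^{a/2}(\delta+\epsilon^a\tau)^{-1/2}$ when one derivative falls), and the $f$-factor in $L^2_{y,v}$. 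The resulting product, combined with $\epsilon^{a-1}$, gives terms of the form $\epsilon^{a/2}(\delta+\epsilon^a\tau)^{-1/2}\|\partial f\|_\sigma\|\partial^\alpha_\beta h\|_{\sigma,|\beta|}$; Cauchy--Schwarz with weight $\eta$, together with the bookkeeping $\epsilon^a\delta^{-1}\leq k^{1/12}\epsilon^{3/5-2a/5}$ from \eqref{5.16}, delivers the three required pieces.

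For the quadratic term $\Gamma(f,f)$, the scheme is standard: in each summand of \eqref{5.8} one factor has at most one $y$-derivative and is estimated in $L^\infty_y$ via the one-dimensional Sobolev inequality $\|g\|_{L^\infty_y}\leq\sqrt{2}\|g\|^{1/2}\|\partial_y g\|^{1/2}$, and the other in $L^2_y$. The a priori assumption \eqref{3.4} converts the pointwise $L^\infty$ factor into $\sqrt{\mathcal{E}_2(\tau)}\leq k^{1/12}\epsilon^{3/5-2a/5}$, and \eqref{2.19} identifies the remaining product with $\mathcal{D}_2(\tau)$, giving the stated $k^{1/12}\epsilon^{3/5-2a/5}\mathcal{D}_2(\tau)$ contribution. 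The inequality \eqref{5.20} follows by the identical scheme using \eqref{5.7} in place of \eqref{5.8} and skipping the weight. The main technical obstacle is the pure $\overline{G}$-$\overline{G}$ piece: one must balance the $\epsilon^{2(1-a)}$ gain from the prefactor against the genuinely non-integrable time decay $(\delta+\epsilon^a\tau)^{-2}$ coming from two derivatives hitting the rarefaction wave, and only the precise choice $\delta=k^{-1}\epsilon^{3/5-2a/5}$ makes the final exponent $7/5+a/15$ match the right-hand side of \eqref{5.19}.
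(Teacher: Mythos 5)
Your proposal follows essentially the same route as the paper: the decomposition $G=\overline{G}+\sqrt{\mu}f$ into the four bilinear pieces, the trilinear bounds \eqref{5.8} (resp.\ \eqref{5.7} for \eqref{5.20}), the fast-decay bounds on $\overline{G}/\sqrt{\mu}$ from \eqref{2.3} and \eqref{5.4} combined with Lemma \ref{lem5.3} and the choice \eqref{3.3} for the pure $\overline{G}$ and cross terms, and the one-dimensional Sobolev imbedding with the a priori bound \eqref{3.4} for $\Gamma(f,f)$ — exactly the scheme of \eqref{5.21}--\eqref{5.30} in the paper. The bookkeeping you outline for the exponent $\epsilon^{\frac{7}{5}+\frac{a}{15}}$ is consistent with the paper's, so the argument is correct as proposed.
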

\begin{proof}
Recall $G=\overline{G}+\sqrt{\mu}f$, a simple computation shows that
\begin{equation}
\label{5.21}
\Gamma(\frac{G}{\sqrt{\mu}},\frac{G}{\sqrt{\mu}})=\Gamma(\frac{\overline{G}}{\sqrt{\mu}},\frac{\overline{G}}{\sqrt{\mu}})
+\Gamma(\frac{\overline{G}}{\sqrt{\mu}},f)+\Gamma(f,\frac{\overline{G}}{\sqrt{\mu}})
+\Gamma(f,f).
\end{equation}
From \eqref{5.8}, one can see that
\begin{align}
\label{5.22}
&\epsilon^{a-1}|(\partial^{\alpha}_{\beta}\Gamma(\frac{\overline{G}}{\sqrt{\mu}},\frac{\overline{G}}{\sqrt{\mu}}),
w^{2|\beta|}\partial^{\alpha}_{\beta}h)|
\nonumber\\
&\leq C\epsilon^{a-1}\sum_{|\alpha_{1}|\leq|\alpha|}
\sum_{|\beta'|\leq|\beta_{1}|\leq|\beta|}\int_{\mathbb{R}}|\partial^{\alpha_{1}}_{\beta^{'}}(\frac{\overline{G}}{\sqrt{\mu}})|_{2,|\beta'|}
|\partial^{\alpha-\alpha_{1}}_{\beta-\beta_{1}}(\frac{\overline{G}}{\sqrt{\mu}})|_{\sigma,|\beta-\beta_{1}|}
|\partial^{\alpha}_{\beta}h|_{\sigma,|\beta|}dy.
\end{align}
In view of \eqref{2.3}, \eqref{5.1} and \eqref{5.2}, we can write
\begin{align*}
\overline{G}=\epsilon^{1-a}\frac{\sqrt{R}\bar{\theta}_{y}}{\sqrt{\theta}}A_{1}(\frac{v-u}{\sqrt{R\theta}})
+\epsilon^{1-a}\bar{u}_{1y}B_{11}(\frac{v-u}{\sqrt{R\theta}}),
\end{align*}
which implies that for $\beta_{1}=(1,0,0)$,
\begin{equation}
\label{5.23}
\partial_{\beta_{1}}\overline{G}=\epsilon^{1-a}\Big\{\frac{\sqrt{R}\bar{\theta}_{y}}{\sqrt{\theta}}
\partial_{v_{1}}A_{1}(\frac{v-u}{\sqrt{R\theta}})(\frac{1}{\sqrt{R\theta}})
+\bar{u}_{1y}\partial_{v_{1}}B_{11}(\frac{v-u}{\sqrt{R\theta}})\frac{1}{\sqrt{R\theta}}\Big\},
\end{equation}
and
\begin{align}
\label{5.24}
\partial_{y}\overline{G}&=\epsilon^{1-a}\Big\{\frac{\sqrt{R}\bar{\theta}_{yy}}{\sqrt{\theta}}A_{1}(\frac{v-u}{\sqrt{R\theta}})
-\frac{\sqrt{R}\bar{\theta}_{y}\theta_{y}}{2\sqrt{\theta^{3}}}A_{1}(\frac{v-u}{\sqrt{R\theta}})
\nonumber\\
&\quad-\frac{\sqrt{R}\bar{\theta}_{y}}{\sqrt{\theta}}
\nabla_{v}A_{1}(\frac{v-u}{\sqrt{R\theta}})\cdot\frac{u_{y}}{\sqrt{R\theta}}
-\frac{\sqrt{R}\bar{\theta}_{y}\theta_{y}}{\sqrt{\theta}}
\nabla_{v}A_{1}(\frac{v-u}{\sqrt{R\theta}})\cdot\frac{v-u}{\sqrt{2R\theta^{3}}}
\nonumber\\
&\quad+\bar{u}_{1yy}B_{11}(\frac{v-u}{\sqrt{R\theta}})
-\frac{\bar{u}_{1y}u_{y}}{\sqrt{R\theta}}\cdot\nabla_{v}B_{11}(\frac{v-u}{\sqrt{R\theta}})
-\frac{\bar{u}_{1y}\theta_{y}(v-u)}{2\sqrt{R\theta^{3}}}\cdot\nabla_{v}B_{11}(\frac{v-u}{\sqrt{R\theta}})
\Big\}.
\end{align}
By using \eqref{5.4} and the similar expansion as \eqref{5.23} and \eqref{5.24}, for any $|\bar{\alpha}|\geq1$
and $|\bar{\beta}|\geq0$, we can obtain
\begin{equation}
\label{5.25}
|\langle v\rangle^{m}\partial_{\bar{\beta}}(\frac{\overline{G}}{\sqrt{\mu}})|_{2,|\bar{\beta}|}
+|\langle v\rangle^{m}\partial_{\bar{\beta}}(\frac{\overline{G}}{\sqrt{\mu}})|_{\sigma,|\bar{\beta}|}
\leq C\epsilon^{1-a}|(\bar{u}_{1y},\bar{\theta}_{y})|,
\end{equation}
and
\begin{equation}
\label{5.26}
|\langle v\rangle^{m}\partial^{\bar{\alpha}}_{\bar{\beta}}(\frac{\overline{G}}{\sqrt{\mu}})|_{2,|\bar{\beta}|}+
|\langle v\rangle^{m}\partial^{\bar{\alpha}}_{\bar{\beta}}(\frac{\overline{G}}{\sqrt{\mu}})|_{\sigma,|\bar{\beta}|}
\leq C\epsilon^{1-a}\{|\partial^{\bar{\alpha}}(\bar{u}_{1y},\bar{\theta}_{y})|+\cdot\cdot\cdot
+|(\bar{u}_{1y},\bar{\theta}_{y})||\partial^{\bar{\alpha}}(u,\theta)|\},
\end{equation}
due to the fact that $|\langle v\rangle^{m}w^{|\bar{\beta}|}\mu^{-\frac{1}{2}}M^{1-\varepsilon}|_{2}\leq C$
for any $m\geq0$ and $\varepsilon>0$ small enough .
\par
With the help of \eqref{5.25} and \eqref{5.26}, we get from the Sobolev imbedding theorem that
\begin{align}
\label{5.27}
&\epsilon^{a-1}\int_{\mathbb{R}}|\partial^{\alpha_{1}}_{\beta^{'}}(\frac{\overline{G}}{\sqrt{\mu}})|_{2,|\beta'|}
|\partial^{\alpha-\alpha_{1}}_{\beta-\beta_{1}}(\frac{\overline{G}}{\sqrt{\mu}})|_{\sigma,|\beta-\beta_{1}|}
|\partial^{\alpha}_{\beta}h|_{\sigma,|\beta|}dy
\nonumber\\
&\leq C\epsilon^{a-1}\int_{\mathbb{R}}\Big\{\epsilon^{1-a}\{|\partial^{\alpha_{1}}(\bar{u}_{1y},\bar{\theta}_{y})|
+|(\bar{u}_{1y},\bar{\theta}_{y})||\partial^{\alpha_{1}}(u,\theta)|\}
\nonumber\\
&\qquad\times\epsilon^{1-a}\{|\partial^{\alpha-\alpha_{1}}(\bar{u}_{1y},\bar{\theta}_{y})|
+|(\bar{u}_{1y},\bar{\theta}_{y})||\partial^{\alpha-\alpha_{1}}(u,\theta)|\}
|\partial^{\alpha}_{\beta}h|_{\sigma,|\beta|}\Big\}dy
\nonumber\\
&\leq C\eta\epsilon^{a-1}\|\partial^{\alpha}_{\beta}h\|_{\sigma,|\beta|}^{2}
+C_{\eta}\epsilon^{\frac{7}{5}+\frac{1}{15}a}(\delta+\epsilon^{a}\tau)^{-\frac{4}{3}}
+C_{\eta}k^{\frac{1}{12}}\epsilon^{\frac{3}{5}-\frac{2}{5}a}\mathcal{D}_{2}(\tau),
\end{align}
according to \eqref{5.3}, \eqref{3.4} and $|\alpha_{1}|\leq|\alpha|\leq1$. This together with \eqref{5.22} give that
\begin{align}
&\epsilon^{a-1}|(\partial^{\alpha}_{\beta}\Gamma(\frac{\overline{G}}{\sqrt{\mu}},\frac{\overline{G}}{\sqrt{\mu}}),
w^{2|\beta|}\partial^{\alpha}_{\beta}h)|\notag\\
&\leq C\eta\epsilon^{a-1}\|\partial^{\alpha}_{\beta}h\|_{\sigma,|\beta|}^{2}
+C_{\eta}\epsilon^{\frac{7}{5}+\frac{1}{15}a}(\delta+\epsilon^{a}\tau)^{-\frac{4}{3}}
+C_{\eta}k^{\frac{1}{12}}\epsilon^{\frac{3}{5}-\frac{2}{5}a}\mathcal{D}_{2}(\tau).\label{5.28}
\end{align}
For the second term of \eqref{5.21}, by using \eqref{5.8}, \eqref{5.26} and the Sobolev imbedding theorem, one can deduce
from \eqref{3.3} and \eqref{3.4} that
\begin{align}
&\epsilon^{a-1}|(\partial^{\alpha}_{\beta}\Gamma(\frac{\overline{G}}{\sqrt{\mu}},f),
w^{2|\beta|}\partial^{\alpha}_{\beta}h)|\notag\\
&\leq C \epsilon^{a-1}\sum_{|\alpha_{1}|\leq|\alpha|}\sum_{|\beta_{1}|\leq|\beta|}
\int_{\mathbb{R}}\epsilon^{1-a}\Big\{|\partial^{\alpha_{1}}(\bar{u}_{1y},\bar{\theta}_{y})|\notag\\
&\qquad\qquad\qquad\qquad\qquad\qquad\quad+|(\bar{u}_{1y},\bar{\theta}_{y})||\partial^{\alpha_{1}}(u,\theta)|\Big\}
|\partial^{\alpha-\alpha_{1}}_{\beta-\beta_{1}}f|_{\sigma,|\beta-\beta_{1}|}|\partial^{\alpha}_{\beta}h|_{\sigma,|\beta|}dy
\nonumber\\
&\leq \eta\epsilon^{a-1}\|\partial^{\alpha}_{\beta}h\|^{2}_{\sigma,|\beta|}
+C_{\eta}k^{\frac{1}{12}}\epsilon^{\frac{3}{5}-\frac{2}{5}a}\mathcal{D}_{2}(\tau).\label{5.29}
\end{align}
Since the third term of \eqref{5.21} shares the same estimates as \eqref{5.29}. Thus, we still deal with
the last term of \eqref{5.21}.
In view of \eqref{5.8}, the imbedding theorem
and \eqref{3.4}, one has
\begin{align}
&\epsilon^{a-1}|(\partial^{\alpha}_{\beta}\Gamma[f,f],
w^{2|\beta|}\partial^{\alpha}_{\beta}h)|\notag\\
&\leq C\epsilon^{a-1}\sum_{|\alpha_{1}|\leq|\alpha|}
\sum_{|\beta'|\leq|\beta_{1}|\leq|\beta|}\int_{\mathbb{R}}
|\partial^{\alpha_{1}}_{\beta^{'}}f|_{2,|\beta'|}|\partial^{\alpha-\alpha_{1}}_{\beta-\beta_{1}}f|_{\sigma,|\beta-\beta_{1}|}
|\partial^{\alpha}_{\beta}h|_{\sigma,|\beta|}dy
\nonumber\\
&\leq \eta\epsilon^{a-1}\|\partial^{\alpha}_{\beta}h\|^{2}_{\sigma,|\beta|}
+C_{\eta}\sqrt{\mathcal{E}_{2}(\tau)}\mathcal{D}_{2}(\tau)\notag\\
&\leq\eta\epsilon^{a-1}\|\partial^{\alpha}_{\beta}h\|^{2}_{\sigma,|\beta|}
+C_{\eta}k^{\frac{1}{12}}\epsilon^{\frac{3}{5}-\frac{2}{5}a}\mathcal{D}_{2}(\tau).\label{5.30}
\end{align}
Collecting  the estimates of \eqref{5.28}, \eqref{5.29} and \eqref{5.30}, we can obtain
\begin{align*}
&\epsilon^{a-1}|(\partial^{\alpha}_{\beta}\Gamma(\frac{G}{\sqrt{\mu}},\frac{G}{\sqrt{\mu}}),
w^{2|\beta|}\partial^{\alpha}_{\beta}h)|\notag\\
&\leq  C\eta\epsilon^{a-1}\|\partial^{\alpha}_{\beta}h\|^{2}_{\sigma,|\beta|}
+C_{\eta}\epsilon^{\frac{7}{5}+\frac{1}{15}a}(\delta+\epsilon^{a}\tau)^{-\frac{4}{3}}
+C_{\eta}k^{\frac{1}{12}}\epsilon^{\frac{3}{5}-\frac{2}{5}a}\mathcal{D}_{2}(\tau).
\end{align*}
This completes the proof of \eqref{5.19}.
One can deduce \eqref{5.20} by employing \eqref{5.7} and the similar arguments as the above related estimates.
This ends the proof of Lemma \ref{lem5.9}.
\end{proof}

\noindent {\bf Acknowledgment:}\,
The research of Renjun Duan was partially supported by the General Research Fund (Project No.~14302817) from RGC of Hong Kong and a Direct Grant from CUHK. The research of Hongjun Yu was supported by the GDUPS 2017 and the NNSFC Grant 11371151.

\medskip

\noindent{\bf Conflict of Interest:} The authors declare that they have no conflict of interest.


\end{document}